\setlist[enumerate]{label={\upshape(\roman*)}}
\definecolor{codegreen}{rgb}{0,0.6,0}
\definecolor{codegray}{rgb}{0.5,0.5,0.5}
\definecolor{codepurple}{rgb}{0.58,0,0.82}
\definecolor{backcolour}{rgb}{0.95,0.95,0.92}
\lstdefinestyle{myStyle}{
	language=Python,
	backgroundcolor=\color{white},   
	commentstyle=\color{codegreen},
	keywordstyle=\color{magenta},
	numberstyle=\tiny\color{codegray},
	stringstyle=\color{codegreen},
	basicstyle=\ttfamily\footnotesize,  
	breakatwhitespace=false,         
	breaklines=true,                 
	captionpos=b,                    
	keepspaces=true,                 
	numbers=left,                    
	numbersep=5pt,
	showspaces=false,                
	showstringspaces=false,
	showtabs=false,                  
	tabsize=2,
	frame=single
}
\definecolor{bred}{rgb}{0.8,0,0}
\newtheorem{theorem}{Theorem}[section]
\newtheorem{proposition}[theorem]{Proposition}
\newtheorem{lemma}[theorem]{Lemma}
\newtheorem{corollary}[theorem]{Corollary}
\newtheorem{remark}[theorem]{Remark}
\newtheorem{definition}[theorem]{Definition}
\newtheorem{assumption}{Assumption}
\newcommand{\K}{\mathbb{K}}
\newcommand{\E}{\mathbb{E}}
\newcommand{\N}{\mathbb{N}}
\newcommand{\R}{\mathbb{R}}
\newcommand{\1}{\mathbbm{1}}
\newcommand{\dee}{\mathrm{d}}
\newcommand{\mc}{\mathcal}
\newcommand{\lfrf}[1]{\left\lfloor #1 \right\rfloor}
\newcommand{\lcrc}[1]{\left\lceil #1 \right\rceil}
\newcommand{\lara}[1]{\left\langle #1 \right\rangle}
\newcommand{\lbrb}[1]{\left| #1 \right|}
\newcommand{\lsrs}[1]{\left[ #1 \right]}
\def\moverlay{\mathpalette\mov@rlay}
\def\mov@rlay#1#2{\leavevmode\vtop{%
   \baselineskip\z@skip \lineskiplimit-\maxdimen
   \ialign{\hfil$\m@th#1##$\hfil\cr#2\crcr}}}
\newcommand{\charfusion}[3][\mathord]{
    #1{\ifx#1\mathop\vphantom{#2}\fi
        \mathpalette\mov@rlay{#2\cr#3}
      }
    \ifx#1\mathop\expandafter\displaylimits\fi}
\newcommand{\bigcupdot}{\charfusion[\mathop]{\bigcup}{\cdot}}
\begin{document}
	
\title[]{\vspace*{-2cm}Robust SGLD algorithm for solving \\non-convex distributionally robust optimisation problems}

\author[A. Neufeld]{Ariel Neufeld}
\author[M. Ng]{Matthew Ng Cheng En}
\author[Y. Zhang]{Ying Zhang} 

{\small
\address{Division of Mathematical Sciences, Nanyang Technological University, 21 Nanyang Link, 637371 Singapore}
\email{ariel.neufeld@ntu.edu.sg}

\address{Division of Mathematical Sciences, Nanyang Technological University, 21 Nanyang Link, 637371 Singapore}
\email{matt0037@e.ntu.edu.sg}

\address{Financial Technology Thrust, Society Hub, The Hong Kong University of Science and Technology (Guangzhou), %No.\ 1 Du Xue Rd, Nansha District,
Guangzhou, China}
\email{yingzhang@hkust-gz.edu.cn}
}

\date{}
%\subjclass[2010]{}
\thanks
{
%Financial supports by the Nanyang Assistant Professorship Grant (NAP Grant) %\textit{Machine Learning based Algorithms in Finance and Insurance} are %gratefully acknowledged.
}
\keywords{Stochastic Gradient Langevin Dynamics (SGLD),  Distributionally Robust Optimisation (DRO), algorithms for stochastic optimisation, expected excess risk, non-linear regression involving neural networks, data-driven optimisation}

\begin{abstract}
	In this paper we develop a Stochastic Gradient Langevin Dynamics (SGLD) algorithm tailored for solving a certain class of non-convex distributionally robust optimisation (DRO) problems. By deriving non-asymptotic convergence bounds, we build an algorithm which for any prescribed accuracy $\varepsilon>0$ outputs an estimator whose expected excess risk is at most $\varepsilon$. 
	As a concrete application, we consider the problem of identifying the best non-linear estimator of a given regression model involving a neural network using adversarially corrupted samples. We formulate this problem as a DRO problem and demonstrate both theoretically and numerically the applicability of the proposed robust SGLD algorithm. Moreover, numerical experiments show that the robust SGLD estimator outperforms the estimator obtained using vanilla SGLD in terms of test accuracy, which highlights the advantage of incorporating model uncertainty when optimising with perturbed samples.
\end{abstract}

\maketitle

\section{Introduction}%\label{sec:intro}
Given $\Xi\subseteq \R^m$, a distance $d_c(\cdot,\cdot)$ on the space of probability measures $\mathcal{P}(\Xi)$, a reference measure $\mu_0\in \mathcal{P}(\Xi)$, parameters $\eta_1, \eta_2>0$, and a possibly non-convex utility function $U:\R^d\times \R^m \to \R$, we consider the following non-convex distributionally robust stochastic optimisation problem 
\begin{equation}
\begin{split}
\label{eqn:dro_problem-Intro}
	\text{minimise } \R^d\ni \theta\mapsto u(\theta):=\left\{\sup_{\mu\in \mc{P}(\Xi)}\left(\int_{\Xi} U(\theta, x)\ \dee\mu(x)-\frac{d_c^2(\mu_0,\mu)}{2\eta_2}\right)+\frac{\eta_1}{2}|\theta|^{2}\right\}.
\end{split}	
\end{equation}
Here, $\mu_0$ represents an estimate for the true but unknown law of the environment of the optimisation problem while $\eta_2>0$ represents the level of model uncertainty an agent has in the environment. Indeed, the smaller $\eta_2$ is chosen the larger the penalty term $\frac{d_c^2(\mu_0,\mu)}{2\eta_2}$ becomes,  hence the more certain the agent believes that his estimated measure $\mu_0$ actually represents the true law of the environment.
\\

The goal of this paper is to \textit{construct} an estimator $\hat{\theta}$ which minimises the expected excess risk associated with \eqref{eqn:dro_problem-Intro}. More precisely, we aim to build an algorithm which for any prescribed accuracy $\varepsilon>0$  outputs a $d$-dimensional estimator $\hat{\theta}_\varepsilon$ defined on a suitable probability space $(\Omega, \mathcal{F},\mathbb{P})$ such that
\begin{equation}\label{excess-risk-intro}
	\mathbb{E}_{\mathbb{P}}[u(\hat{\theta}_\varepsilon)]-\inf_{\theta\in \mathbb{R}^d} u(\theta)< \varepsilon.
	\end{equation}
%In the seminal \cite{knight1921risk}, Knight distinguished between risky events, whose outcome depend on randomness but whose probabilities are known, and uncertain events, whose outcome depends on randomness but whose randomness cannot be known by an agent taking decisions in that environment. Knight and later Ellsberg \cite{ellsberg1961risk} argued that it cannot be economically justified that an agent whose making decisions can have the knowledge of the precise law characterising the environment and hence should take model uncertainty under consideration. 
\vspace{0.1cm}

Already in \cite{ellsberg1961risk,knight1921risk}, Knight and Ellsberg argued that an agent who is making decisions  cannot have the precise knowledge of the true law characterising the environment and hence should take model uncertainty under consideration. 
In distributionally robust optimisation (DRO) problems, there are two approaches to overcome the problem of model uncertainty. In  the first approach, one considers a set of probability measures representing all candidates  for the true but unknown law of the environment and one optimises over the worst-case law among those candidate laws. A typical example for such an ambiguity set of laws would be a Wasserstein-ball of certain radius around a reference measure \cite{blanchet2019quantifying, mohajerin2018data}.  In the second approach, like in our DRO problem \eqref{eqn:dro_problem-Intro}, one starts with a reference measure $\mu_0$ representing the estimated law for the  true but unknown law of the environment and then introduces a penalty function which penalises all probability measures the further they are away from that given reference measure \cite{bartl2017computational}. One then optimises robustly over all possible laws while the penalty function controls how much each law can contribute to the optimisation problem. Typically in both approaches, the corresponding reference measure has been estimated either from historical data by taking the empirical measure, or through experts' insights. 
We refer to, e.g., 
\cite{bayraktar2024data, delage2010distributionally, eckstein2020robust, neufeld2023markov, rujeerapaiboon2016robust} %and the references therein 
for DRO applications in financial engineering, 
to, e.g., \cite{aolaritei2023distributional, chen2022distributionally, gao2023distributionally, kong2020appointment, neufeld2022numerical, taskesen2021sequential} for DRO applications in operations research and artificial intelligence, as well as to, e.g.,  \cite{bartl2021sensitivity,bartlneufeldpark2023sensitivity, bartl2024numerical, gao2022wasserstein,jiang2024sensitivity, nendel2022parametric, obloj2021distributionally, sauldubois2024first} for recent developments on the sensitivity analysis of DRO problems in various fields.
\\

In this paper, we develop a Stochastic Gradient Langevin Dynamics (SGLD) algorithm that can minimise the expected excess risk of a certain class of DRO problems of the form \eqref{eqn:dro_problem-Intro} as described in \eqref{excess-risk-intro}. In Theorem~\ref{theorem:excess risk},  we obtain (under Assumptions~\ref{assumption:1}--\ref{assumption:4}) non-asymptotic convergence bounds for our robust SGLD algorithm \eqref{eqn:sgld}--\eqref{defn:grad_v_delta}. As a consequence of the non-asymptotic convergence bounds, we can indeed develop an algorithm which for every prescribed accuracy~$\varepsilon>0$ outputs a $d$-dimensional estimator which minimises the expected excess risk as defined in~\eqref{excess-risk-intro}. We refer to Algorithm~\ref{Algorithm1} and its theoretical properties stated in Corollary~\ref{corollary:main_result}.
\\

SGLD algorithms are commonly-used methodologies to solve (non-convex) stochastic optimisation problems \cite{chen2020stationary,deng2022adaptively,farghly2021time,kinoshita2022improved,neufeld2022non,raginsky2017non,xu2018global,zhang2017hitting,zhu2024uniform} as well as the sampling problem \cite{convex,ppbdm,chau2021stochastic,pmlr-v65-dalalyan17a,dalalyan2019user,welling2011bayesian,zou2021faster}. Compared to stochastic gradient descent (SGD) algorithms, SGLD algorithms include an additional noise term in each iteration which allows them to better overcome local minima than SGD algorithms. We refer to \cite{li2016preconditioned,lim2021polygonal,lim2022langevin,lim2021non,lovas2023taming,wu2020characterizing} for the development of SGLD based algorithms to solve stochastic optimisation problems involving the training of neural networks, 
	to \cite{chu2021non,sglddiscont} to solve portfolio optimisation problems, 
to \cite{bras2023langevincontrol} for deep hedging,
as well as to \cite{bras2023langevin,durmus2019analysis,kamalaruban2020robust,zhang2022low,zhang2023nonasymptotic} for large-scale Bayesian inference.
% including, e.g., data classification, image recognition, Bayesian model selection, Bayesian probabilistic matrix factorisation, and variational inference.
\\

However, so far, no SGLD algorithm has been developed tailored to solve general non-convex DRO problems \eqref{eqn:dro_problem-Intro}.
%, without, e.g., imposing the underlying support of the measures $\Xi$ to be discrete.
%The results that are closest to ours are the following.\\
In \cite{li2023policy}, the authors use a \textit{standard} projected SGLD algorithm to solve robust Markov decision problems (MDP) defined on \textit{finite} state and action spaces where the corresponding ambiguity set of probability measures is not required to be rectangular. Since their state space is finite, they can exploit the relation between the value function of the robust MDP and the sampling problem from the Gibbs distribution in order to show that a standard  (i.e.\ not tailored to solve DRO problems)
% (on to the state space projected) 
Langevin dynamics-based algorithm can minimise the expected excess risk arbitrarily well. In \cite{liu2025stochastic}, the authors consider solving min-sum-max optimisation problems which include Wasserstein DRO problems. They propose an iterative smoothing algorithm and provide an almost sure convergence guarantee to a directional stationary point under a certain Clarke regularity condition. However, such a stationary point may not be a global minimiser to the problem under consideration.
%In [GURBUZBALABAN], the authors proposed a stochastic subgradient method which can solve a general class of  DRO problems with respect to a set of ambiguity measures being absolutely continuous with respect to a reference measure characterised by a a coherent risk measure, where the loss function is neither smooth (but weakly-smooth) nor convex. 
%We also refer to \cite{wiesemann2014distributionally} for a unifying approach to solve a certain class of DRO problems, to 
%\cite{blanchet2022optimal} develop optimisation procedures for optimal-transport based
\\

As a concrete application of the general framework \eqref{eqn:dro_problem-Intro}, we consider the problem of identifying the best non-linear estimator of a given regression model involving a neural network using training samples that are corrupted with adversarial perturbations. We formulate the aforementioned problem as a DRO problem and solve it using our proposed robust SGLD algorithm so as to potentially mitigate the impact of outlying samples and obtain an estimator that is consistent with the true distribution. We show in Section \ref{sec:Application} that the DRO problem satisfies Assumptions~\ref{assumption:1}-\ref{assumption:4}, hence Theorem~\ref{theorem:excess risk} applies, which provides a theoretical guarantee for robust SGLD to converge. Numerical experiments support our main result, and empirically demonstrate that our robust SGLD algorithm outperforms the vanilla SGLD algorithm \cite{welling2011bayesian} in terms of test accuracy when choosing the penalisation parameter $\eta_2>0$ in a suitable way. %, which is consistent with the findings in, e.g., \cite{chen2018robust,shafieezadeh2015distributionally}.
\\

The rest of this paper is organised as follows. In Section~\ref{sec:AssumpMainRes}, we introduce the setting of our distributionally robust optimisation problem, the assumptions imposed, as well as present the main results of our paper. As a concrete application of our general setting in Section~\ref{sec:AssumpMainRes}, we consider in Section~\ref{sec:Application} the DRO problem of identifying the best non-linear estimator of a given regression model using adversarially corrupted training data. We show that it fits into our general setting with the corresponding assumptions imposed in Section~\ref{sec:AssumpMainRes}. In particular, the main results of our paper can be applied to this concrete DRO problem. We provide numerical results which support our theoretical findings. In Section~\ref{sec:proofOverview} we present an overview of the proofs of our main results, whereas in Sections~\ref{sec:proof_sec:AssumpMainRes}--\ref{sec:proof_sec:proofOverview} we present the remaining proofs of all results and statements presented in Sections~\ref{sec:AssumpMainRes}--\ref{sec:proofOverview}.
\\

\textbf{Notation.} We conclude this section by introducing some notation. Let $\R$ (respectively, $\R_{\geq 0}$) denote the set of (non-negative) real numbers. Let $(\Omega,\mathcal{F})$ be a measurable space. Given a random variable $Z$ and a probability measure $\mathbb{Q}$ on $(\Omega, \mc{F})$, we denote by $\E_\mathbb{Q}[Z]:=\int_\Omega Z\ \dee\mathbb{Q}$ the expectation of $Z$ with respect to $\mathbb{Q}$. For $p\in[1,\infty)$, $L^p(\Omega, \mc{F}, \mathbb{Q})$, or $L^p(\mathbb{Q})$ for short when the measurable space in consideration is clear from the context, is used to denote the space of $p$-integrable real-valued random variables on $\Omega$ with respect to $\mathbb{P}$. Fix integers $d, m \geq 1$. A random vector $\theta\in\R^d$ is always understood to be a column vector unless stated otherwise, with the exception of the gradient $\nabla f$ of a given function $f:\R^d\to \R$ being a row vector as consistent with the interpretation of $\nabla$ acting as a linear operator from $\R^d$ to $\R$ and having matrix representation in $\R^{1\times d}$. For an $\R^d$-valued random variable $Z$, its law on $\mathcal{B}(\R^d)$, i.e. the Borel sigma-algebra of $\R^d$, is denoted by $\mathcal{L}(Z)$. We denote by $I_{d}$ the $d$-dimensional identity matrix and by  $\mc{N}(0,I_{d})$ the $d$-dimensional standard normal distribution.
 For a positive real number $a$, we denote by $\lfrf{a}$ its integer part, and $\lcrc{a} = \lfrf{a}+1 $. The notation $\1_\cdot$ is used to denote indicator functions. For pairwise disjoint sets $\{A_j\}_{j\in\mathbb{N}}$, we denote by $\bigcupdot_{j\in\N}A_j$ their union. Given a normed space $(\Xi, \lVert\cdot\rVert_\Xi)$ and an element $x\in \Xi$, we denote the norm of $x$ by $\lVert x\rVert_\Xi$. In the particular case $\Xi=\R^d$ and $\lVert\cdot\rVert$ is the Euclidean norm, we understand the notation $|x|$ as referring to $|x| = \lVert x\rVert_{\R^d}$ for $x\in\R^d$. Similarly, for a real-valued $m\times d$ matrix $A\in\R^{m\times d}$, we understand $|A|$ as referring to the operator norm $|A| = \sup\{|Ax|: |x|\leq 1, x\in\R^d\}$. The Euclidean scalar product is denoted by $\langle \cdot,\cdot\rangle$. For any normed space $\Xi$, let $\mathcal{P}(\Xi)$ denote the set of probability measures on $\mathcal{B}(\Xi)$. For $\mu,\mu'\in\mathcal{P}(\Xi)$, let $\mathcal{C}(\mu,\mu')$ denote the set of couplings of $\mu,\mu'$, that is, probability measures $\zeta$ on $\mathcal{B}(\Xi\times \Xi)$ such that its respective marginals are $\mu,\mu'$. Given two Borel probability measures $\mu,\mu'\in\mc{P}(\Xi)$ and a cost function $c:\Xi\times\Xi\to [0,\infty]$ in the sense of \cite{bartl2017computational}, the cost of transportation between $\mu$ and $\mu'$ is defined by
\begin{align*}
d_c(\mu,\mu'):=
\inf_{\zeta\in\mathcal{C}(\mu,\mu')}\int_{\Xi\times \Xi}c(\theta,\theta')\ \dee\zeta(\theta,\theta').%\label{eqn:wassertein_p}
\end{align*}

\section{Assumptions and main results}\label{sec:AssumpMainRes}
\subsection{Problem Statement}%\label{subsec:problemStatement}
Let $\Xi$ be a compact subset of $\R^m$,
% and $X:\Omega\to\Xi$ be a $\Xi$-valued random variable. 
let $U:\R^d\times \Xi\to \R$ be a measurable function, let $c:\Xi\times\Xi \to \R_{\geq 0}$ be defined as $c(x,x'):=|x-x'|^p$ for some $p\in[1,\infty)$, and let $\eta_1,\eta_2>0$ be regularisation parameters.
%and $\varphi:\R_{\geq 0}\to\R_{\geq 0}$ be cost and penalty functions in the sense of \cite{bartl2017computational}, respectively. 
Given a reference probability measure $\mu_0\in\mc{P}(\Xi)$, the main problem of interest is in the form of the following (regularised) distributionally robust stochastic optimisation problem 
\begin{align}
\label{eqn:dro_problem}
\text{minimise } \R^d\ni \theta\mapsto u(\theta):=\left\{\sup_{\mu\in \mc{P}(\Xi)}\left(\int_{\Xi} U(\theta, x)\ \dee\mu(x)-\frac{d_c^2(\mu_0,\mu)}{2\eta_2}\right)+\frac{\eta_1}{2}|\theta|^{2}\right\}.
\end{align}

%The following duality result is a consequence of Theorem 2.4 of \cite{bartl2017computational}.
%
%\begin{theorem}[\cite{bartl2017computational}]
%\label{theorem:wass_duality}
%For any $\mu_0\in\mc{P}(\Xi)$ and any measurable function $U:\R^d\times \Xi\to\R$ such that for every $\theta\in\R^d$, $x\mapsto U(\theta,x)$ is in $L^1(\mu_0)$ and bounded from below, the following duality result holds for every $\theta\in\R^d$:
%\begin{align}
%\sup_{\mu\in\mc{P}(\Xi)}\left\{\int_{\Xi} U(\theta,x)\ \dee \mu(x)-\varphi(d_c(\mu_0,\mu))\right\} = \inf_{\mathfrak{a}\geq 0}\left\{\varphi^*(\mathfrak{a})+\int_{\Xi}\sup_{y\in\Xi}\left\{U(\theta,y)-\mathfrak{a} c(x,y)\right\}\ \dee\mu_0(x)\right\},
%\end{align}
%where $\varphi^*$ denotes the convex conjugate of $\varphi$.
%\end{theorem}
%
%In this paper, we consider the problem \eqref{eqn:dro_problem} with the choice of cost function $c(x,x'):=|x-x'|^p$ and penalty function $\varphi(d):=\frac{d^2}{2\eta_2}$, where $p\in [1,\infty)$ and $\eta_2\in(0,1)$. By applying the duality result of Theorem \ref{theorem:wass_duality}, one obtains the equivalent dual formulation of the problem \eqref{eqn:dro_problem} as
%\begin{align}
%\label{eqn:dro_problem_dual}
%z_D
%:=&\ \inf_{\theta\in\R^d}\inf_{\mathfrak{a}\geq 0}\left\{\int_{\Xi}\sup_{y\in\Xi}\left\{U(\theta,y)-\mathfrak{a} |x-y|^p\right\}\ \dee\mu_0(x)+\frac{\eta_1}{2}|\theta|^{2}+\frac{\eta_2}{2}|\mathfrak{a}|^2\right\},
%\end{align}
%such that $z_P=z_D$.\\

\subsection{Assumptions} %\label{subsec:assumptions} 
In this section we present the assumptions imposed on the distributionally robust stochastic optimisation problem~\eqref{eqn:dro_problem}. 

%Denoting $\bar{\theta}:=(\theta,\alpha)\in\R^{d+1}$, we define
%\begin{align}
%v(\bar{\theta}) := \int_\Xi \tilde{V}(\bar{\theta}, x)\ \dee \mu_0(x),\qquad
%\tilde{V}(\bar{\theta}, x) := \sup_{y\in\Xi}\left\{U(\theta,y)-\iota(\alpha) |x-y|^p\right\}+\frac{\eta_1}{2}|\theta|^{2}+\frac{\eta_2}{2}|\iota(\alpha)|^2\label{defn:small_v}
%\end{align}
%such that we have
%\begin{align}
%z_D = \inf_{\bar{\theta}\in\R^{d+1}}v(\bar{\theta})\ (\ =z_P)\label{eqn:dro_problem_dual_transformed}
%\end{align}
%by the surjectivity of $\iota$. The optimisation problem \eqref{eqn:dro_problem_dual_transformed} is now over the whole domain $\R^{d+1}$ in $\bar{\theta}:=(\theta,\alpha)$ to which the SGLD algorithm of \cite{zhang2023nonasymptotic} can be applied.
\begin{assumption}
\label{assumption:1}
$\Xi$ is a compact subset of $\R^m$.  Denote, henceforth, $M_\Xi:=\max_{x\in\Xi}|x|<\infty$.
\end{assumption}

\begin{assumption}
\label{assumption:3} For every $x\in\Xi$, the mapping $\theta\mapsto U(\theta,x)$ is continuously differentiable. Moreover, there exists a constant $L_\nabla>0$ such that for all $\theta_1,\theta_2\in\R^d$ and $x\in\Xi$,
\[
\lbrb{\nabla_\theta U(\theta_1,x)-\nabla_\theta U(\theta_2, x)} \leq L_{\nabla}|\theta_1-\theta_2|.
\]
In addition, there exists a constant $K_\nabla>1$ such that for all $\theta\in\R^d$ and $x\in\Xi$, 
\[
\lbrb{\nabla_\theta U(\theta,x)} \leq K_{\nabla}.
\]
\end{assumption}

\begin{remark}
\label{remark:u_lipschitz}
Under Assumption \ref{assumption:3}, it holds for all $\theta_1,\theta_2\in\R^d$ and $x\in\Xi$ that
\[
\lbrb{ U(\theta_1,x)- U(\theta_2, x)} \leq K_\nabla |\theta_1-\theta_2|.
\]
Moreover, under Assumptions \ref{assumption:1} and \ref{assumption:3}, it holds for all $\theta\in\R^d$ and $x\in\Xi$ that
\[
|U(\theta,x)|\leq \tilde{K}_\nabla (1+|\theta|),
\]
where $\tilde{K}_\nabla := \max\left\{K_\nabla, \max_{x\in\Xi}|U(0,x)|\right\}$.
\end{remark}

\begin{assumption}
\label{assumption:4}
There exists a constant $J_U>0$ such that for all $\theta\in \R^d$ and $x_1,x_2\in\Xi$,
\[
\lbrb{U(\theta, x_1)-U(\theta,x_2)} \leq J_U (1+|\theta|)|x_1-x_2|.
\]
\end{assumption}

	\begin{remark}
		We comment on our assumptions:
		\begin{enumerate}
			\item Assumption~\ref{assumption:1} is a technical condition which we impose to enable the transformation of DRO problem~\eqref{eqn:dro_problem} to a discretised and smoothed minimisation problem~\eqref{defn:v_delta}-\eqref{eqn:dro_problem_dual_discrete_smoothed}. Then, the SGLD algorithm~\eqref{eqn:sgld}-\eqref{defn:grad_v_delta} can be applied to solve the problem~\eqref{defn:v_delta}-\eqref{eqn:dro_problem_dual_discrete_smoothed} with theoretical guarantees for its performance, see Theorem~\ref{theorem:excess risk}. We refer to Remark~\ref{rmk:sgldexplanation} for detailed explanations. %We note that, although Assumption~\ref{assumption:1} may seem restrictive, it can be satisfied by a wide range of real-world applications including, e.g., the robust non-linear regression presented in Section~\ref{sec:Application}.
			\item In Assumption~\ref{assumption:3}, we assume that $\nabla_\theta U$ is globally Lipschitz continuous and bounded. These conditions ensure that the stochastic gradient of the proposed SGLD algorithm~\eqref{eqn:sgld}-\eqref{defn:grad_v_delta} is globally Lipschitz continuous, see Proposition~\ref{prop:glob_lipschitz}. 
			This allows us to establish an error bound for the SGLD algorithm~\eqref{eqn:sgld}-\eqref{defn:grad_v_delta}, see Proposition~\ref{prop:excess_risk_discrete}.
			\item Assumption~\ref{assumption:4} is essential to establish the quadrature error between the dual problem $z_D$~\eqref{eqn:dro_problem_dual} and its discretised version $z_{D,\ell,\mathfrak{j}}$~\eqref{eqn:dro_problem_discrete} as shown in Proposition~\ref{prop:quadrature}.
		\end{enumerate}
		All together, these assumptions allow us to derive an upper bound with explicit constants for the expected excess risk associated with DRO problem~\eqref{eqn:dro_problem}, 
		which can be made arbitrarily small, see Theorem~\ref{theorem:excess risk} and Corollary~\ref{corollary:main_result}.
	\end{remark}

\subsection{Main Result} 
In this section, we define our robust SGLD algorithm constructed on a suitable probability space $(\Omega,\mathcal{F},\mathbb{P})$ and
state our main result, which is a non-asymptotic upper bound on the excess risk under $\mathbb{P}$  %$\mu_0$, 
derived under the stated assumptions.

\begin{definition}\label{def:iota} Let $\iota: \R \to \R_{\geq 0}$ be defined by $\iota(\alpha) = \log(\cosh{\alpha})$.
\end{definition}
\begin{remark}\label{remark:iota_dissipativity}
We note that $\iota$ defined in Definition~\ref{def:iota} is a surjective and continuously differentiable function such that its derivative $\iota'$ is $L_{\iota}$-Lipschitz continuous and bounded by some constant $M_{\iota}>0$, and $\iota\cdot\iota'$ is $\tilde{L}_\iota$-Lipschitz continuous. Moreover, there exist constants $a_\iota,b_\iota>0$ such that for all $\alpha\in\R$, the following dissipativity condition holds:
\begin{align}
\alpha\iota(\alpha)\iota'(\alpha)\geq a_\iota \alpha^2-b_\iota.\label{eqn:dissipativity_cond}
\end{align} 
\end{remark}

Given positive integers $\ell, \mathfrak{j}>0$, we define the set of dyadic rationals
\begin{align}
\K_{\ell,\mathfrak{j}}:= \left\{-2^{\ell-1},-2^{\ell-1}+\frac{1}{2^\mathfrak{j}},\cdots,2^{\ell-1}-\frac{1}{2^\mathfrak{j}} \right\},\label{defn:dyadic}
\end{align}
and fix, henceforth, an $\ell\in\N$ large enough such that $\Xi\subseteq [-2^{\ell-1},2^{\ell-1})^m$. We also denote the finite set
\begin{align*}
\{\xi_j^{\ell,\mathfrak{j}}\}_{j=1,\cdots, N_{\ell,\mathfrak{j}}}:=&\ \Xi\cap \K_{\ell,\mathfrak{j}}^m,\nonumber\\
N_{\ell,\mathfrak{j}}:=&\ 2^{m(\ell+\mathfrak{j})},
\end{align*}
where $\K_{\ell,\mathfrak{j}}^m$ denotes the $m$-th Cartesian power of the set $\K_{\ell,\mathfrak{j}}$. In addition, we denote, for the ease of notation, $\xi_j:= \xi_j^{\ell, \mathfrak{j}}$ and $N:=N_{\ell,\mathfrak{j}}$. That is, the dependence of the quantities $\xi_j$ and $N$ on $\ell$ and $\mathfrak{j}$ are suppressed for the sake of brevity. In addition, we fix a probability space $(\Omega,\mathcal{F},\mathbb{P})$ such that $(X_n)_{n\in\N_0}$, $(Z_n)_{n\in\N_0}$ are i.i.d.\ sequences with $\mathbb{P}\circ X_0^{-1}=\mu_0\in\mc{P}(\Xi)$ and $\mathbb{P}\circ Z_0^{-1}\sim\mc{N}(0,I_{d+1})$. We assume $(X_n)_{n\in\N_0}$ and $(Z_n)_{n\in\N_0}$ are independent. \\

The proposed robust SGLD algorithm yields a sequence of estimators $(\hat{\bar{\theta}}^{\lambda, \delta, \ell,\mathfrak{j}}_n)_{n\in\N_0}$ with $\hat{\bar{\theta}}^{\lambda, \delta, \ell,\mathfrak{j}}_{n} := (\hat{\theta}^{\lambda, \delta, \ell,\mathfrak{j}}_n,\hat{\alpha}^{\lambda, \delta, \ell,\mathfrak{j}}_n)\in \R^d \times \R$, which, for a given $\delta>0$, choice of step size $\lambda\in(0,\lambda_{\max,\delta})$, where the maximum step size restriction $\lambda_{\max,\delta}$ is given explicitly in \eqref{defn:step}, $\mathfrak{j}\in\N$ controlling the grid mesh, and $\beta>0$ representing the inverse temperature parameter \cite{welling2011bayesian} which controls the magnitude of the Gaussian noise $(Z_n)_{n\in\N_0}$, is defined recursively as
\begin{align}
\hat{\bar{\theta}}^{\lambda, \delta, \ell, \mathfrak{j}}_{n+1}
:=&\ \hat{\bar{\theta}}^{\lambda, \delta, \ell,\mathfrak{j}}_n-\lambda H^{\delta, \ell,\mathfrak{j}}(\hat{\bar{\theta}}_n^{\lambda, \delta, \ell,\mathfrak{j}},X_{n+1})+\sqrt{2\lambda\beta^{-1}}Z_{n+1},\qquad
\hat{\bar{\theta}}_0^{\lambda, \delta, \ell,\mathfrak{j}}
= \bar{\theta}_0,
\label{eqn:sgld}
\end{align}
where
\begin{align}
\begin{split}\label{defn:grad_v_delta}
H^{\delta, \ell,\mathfrak{j}}(\bar{\theta},x):=&\ \begin{pmatrix}\eta_1\theta^T+\frac{\sum^N_{j=1}F_j^{\delta,\ell,\mathfrak{j}}(\bar{\theta},x)\nabla_\theta U(\theta, \xi_j)}{\sum^N_{j=1}F_j^{\delta,\ell,\mathfrak{j}}(\bar{\theta}, x)},&\ \eta_2\iota(\alpha)\iota'(\alpha)-\frac{\sum^N_{j=1}F_j^{\delta,\ell,\mathfrak{j}}(\bar{\theta},x)\iota'(\alpha)|x-\xi_j|^p}{\sum^N_{j=1}F_j^{\delta,\ell,\mathfrak{j}}(\bar{\theta}, x)}\end{pmatrix}^T,\\
F_j^{\delta,\ell,\mathfrak{j}}(\bar{\theta}, x):=&\ \exp\left[\frac{1}{\delta}\left(U(\theta,\xi_j)-\iota(\alpha)|x-\xi_j|^p\right)\right],
\end{split}
\end{align}
%To solve the standard optimisation problem \eqref{defn:v_delta}-\eqref{defn:small_v_delta}, we consider the SGLD algorithm yielding a sequence of estimators $(\hat{\bar{\theta}}^{\lambda, \delta}_n)_{n\in\N_0}$ with $\hat{\bar{\theta}}^{\lambda, \delta}_{n} := (\hat{\theta}^{\lambda, \delta}_n,\hat{\alpha}^{\lambda, \delta}_n)$, defined as
%\begin{align}
%\hat{\bar{\theta}}^{\lambda, \delta}_{n+1}
%:=&\ \hat{\bar{\theta}}^{\lambda, \delta}_n-\lambda\nabla_{\bar{\theta}}^T\tilde{V}^\delta(\hat{\bar{\theta}}_n^{\lambda, \delta},X_{n+1})+\sqrt{2\lambda\beta^{-1}}Z_{n+1},\qquad
%\hat{\bar{\theta}}_0^{\lambda, \delta}
%= \bar{\theta}_0,
%\label{eqn:sgld}
%\end{align}
%where
%\begin{align}
%\nabla_{\bar{\theta}}\tilde{V}^\delta(\bar{\theta},x):=&\ \begin{pmatrix}\eta\theta^T+\frac{\sum^N_{j=1}F_j(\bar{\theta},x)\nabla_\theta U(\theta, \xi_j)}{\sum^N_{j=1}F_j(\bar{\theta}, x)},&\ \eta\iota(\alpha)\iota'(\alpha)-\frac{\sum^N_{j=1}F_j(\bar{\theta},x)\iota'(\alpha)|x-\xi_j|^p}{\sum^N_{j=1}F_j(\bar{\theta}, x)}\end{pmatrix},\nonumber\\
%F_j(\bar{\theta}, x):=&\ \exp\left[\frac{1}{\delta}\left(U(\theta,\xi_j)-\iota(\alpha)|x-\xi_j|^p\right)\right].\label{defn:grad_v_delta}
%\end{align}
%By construction, this SGLD algorithm solves the regularised and distributionally robust optimisation problem \eqref{eqn:dro_problem} in view of the duality result \eqref{eqn:dro_problem_dual_transformed}.\\
%
with $\bar{\theta}=:(\theta,\alpha)\in \R^d\times \R$ and $x\in\Xi\subset\R^m$. The following main result gives a non-asymptotic upper bound for the expected excess risk under $\mathbb{P}$ %the reference probability measure 
of the SGLD algorithm associated with \eqref{eqn:dro_problem}.

\begin{remark}\label{rmk:sgldexplanation}
We note that the proposed algorithm~\eqref{eqn:sgld}-\eqref{defn:grad_v_delta} is the SGLD algorithm with a specifically designed stochastic gradient given in \eqref{defn:grad_v_delta}. The motivation for the design is as follows. Recall that our goal is to use (a variant of) the SGLD algorithm to solve DRO problem~\eqref{eqn:dro_problem} with theoretical guarantees for its performance. To avoid the infinite-dimensional maximisation problem over probability measures, the main idea is to use duality arguments to reformulate problem~\eqref{eqn:dro_problem} as a standard minimisation problem (i.e., without distributional robustness) to which convergence results for the SGLD algorithm, e.g., \cite[Corollary 2.8]{zhang2023nonasymptotic}, can be applied. We achieve this by taking three steps as described in Sections~\ref{subsec:dual_problem_formulation}, \ref{subsec:finite_grid}, and \ref{subsec:nesterov}, respectively. More precisely,
\begin{enumerate}
\item in Section~\ref{subsec:dual_problem_formulation}, we apply the duality result~\cite[Theorem 2.4]{bartl2017computational} to obtain the dual formulation of problem~\eqref{eqn:dro_problem}. By further applying the transformation $\mathfrak{a} = \iota(\alpha)$ with $\iota$ defined in Definition~\ref{def:iota}, and by using the surjectivity of $\iota: \R \to \R_{\geq 0}$, we are able to derive the corresponding optimisation problem of dual problem \eqref{eqn:dro_problem_dual} over the whole domain $\mathbb{R}^{d+1}$ instead of $\mathbb{R}^d\times [0,\infty)$, see \eqref{defn:small_v}-\eqref{eqn:dro_problem_dual_transformed}, which is crucial to apply SGLD algorithms. However, the function~$\tilde{V}$ in \eqref{defn:small_v} is not necessarily differentiable w.r.t.\ $\theta$ under our assumptions, hence, classical convergence results for the SGLD algorithm do not apply.
\item To fix this issue, in Section~\ref{subsec:finite_grid}, we reduce the dual problem~\eqref{eqn:dro_problem_dual} to a discretised version \eqref{eqn:dro_problem_discrete} where the data points are defined on a finite grid $\Xi\cap \K_{\ell,\mathfrak{j}}^m$ with $\K_{\ell,\mathfrak{j}}$ defined in \eqref{defn:dyadic}. Consequently, $\tilde{V}$ in~\eqref{defn:small_v} becomes $\tilde{V}^{\ell,\mathfrak{j}}$ in~\eqref{defn:small_v_discrete} which can be approximated using continuously differentiable functions through certain smoothing methods.
\item Finally, in Section~\ref{subsec:nesterov}, we apply Nesterov's smoothing technique to obtain a continuously differentiable approximation $\tilde{V}^{\delta,\ell,\mathfrak{j}}$ of $\tilde{V}^{\ell,\mathfrak{j}}$ defined in \eqref{defn:small_v_delta_discrete}. With this approximation, we further obtain a discretised and smoothed minimisation problem~\eqref{defn:v_delta}-\eqref{eqn:dro_problem_dual_discrete_smoothed} which can be solved by using the SGLD algorithm with a theoretical guarantee provided by \cite[Corollary 2.8]{zhang2023nonasymptotic}.
\end{enumerate}
Following these steps, we define the stochastic gradient of our proposed algorithm to be the gradient of $\tilde{V}^{\delta,\ell,\mathfrak{j}}$ w.r.t.\ $\theta$, given explicitly in \eqref{defn:grad_v_delta}.

To see that the robust SGLD algorithm \eqref{eqn:sgld}-\eqref{defn:grad_v_delta} solves the DRO problem \eqref{eqn:dro_problem}, note that the approximation error between problem~\eqref{eqn:dro_problem} and its discretised and smoothed version~\eqref{defn:v_delta}-\eqref{eqn:dro_problem_dual_discrete_smoothed} can be made arbitrarily small by choosing appropriately parameters $\delta,\ell,\mathfrak{j}$ as shown in Proposition~\ref{prop:quadrature} and Corollary~\ref{co:smoothing}. Hereby, we employ the regularity assumptions of $U:\R^d\times \Xi\to \R$  and $\iota: \R \to \R_{\geq 0}$, as well as the assumption that $\Xi \subseteq \R^m$ is compact and so $M_\Xi:=\max_{x\in\Xi}|x|$ is finite. These, together with the convergence result in Corollary~\ref{corollary:excess_risk_discrete}, provide a theoretical guarantee for the performance of the proposed algorithm. Crucially, using this approach, we obtain explicit constants on the convergence upper bound as well as the explicit rate of convergence. We refer to Theorem~\ref{theorem:excess risk} and Corollary~\ref{corollary:main_result} below for more details.
\end{remark}

\begin{theorem}
\label{theorem:excess risk}
Let Assumptions \ref{assumption:1}, \ref{assumption:3}, and \ref{assumption:4} hold. Let $\beta,\delta>0$, and let $\bar{\theta}_0\in L^4(\Omega, \mc{F},\mathbb{P};\R^{d+1})$.  Moreover, let $(\hat{{\theta}}^{\lambda, \delta,\ell,\mathfrak{j}}_n)_{n\in\N}$ denote the first $d$ components of the sequence of estimators obtained from the SGLD algorithm in \eqref{eqn:sgld} defined on the probability space $(\Omega,\mc{F},\mathbb{P})$. Then, there exist explicit constants $a, c_{\delta,\beta}, C_{1,\delta,\ell,\mathfrak{j},\beta}, C_{2,\delta,\ell,\mathfrak{j},\beta}, C_{3,\delta,\beta}, C_4, C_{5,\delta,\beta}, C_6>0$, defined in Appendix \ref{appendix:B}, such that for each $n$, step size $\lambda\in (0,\lambda_{\max,\delta})$, and $\mathfrak{j}\in\N$,
\begin{align}
\E_\mathbb{P}\!\left[u(\hat{\theta}^{\lambda, \delta,\ell,\mathfrak{j}}_n)\right]-\inf_{\theta\in\R^d} u(\theta) 
\leq&\ C_{1,\delta,\ell,\mathfrak{j},\beta} e^{-c_{\delta,\beta}\lambda n/4} + C_{2,\delta,\ell,\mathfrak{j},\beta} \lambda^{1/4} + C_{3,\delta,\beta} \nonumber\\
&\ + \delta m(\ell+\mathfrak{j})\log{2}+\frac{\sqrt{m}}{2^\mathfrak{j}}\left(C_4+C_{5,\delta,\beta}+C_6e^{-a\lambda(n+1)/2}\right).\label{eqn:eerub}
\end{align}
The dependence of the constants on key parameters is summarised in Appendix \ref{appendix:A}.
\end{theorem}

\begin{corollary}
\label{corollary:main_result}
Let Assumptions \ref{assumption:1}, \ref{assumption:3}, and \ref{assumption:4} hold, and let $\varepsilon>0$ be given. Then, 
%the estimator $\hat{{\theta}}^{\lambda, \delta,\ell,\mathfrak{j}}_n$, which is the output of Algorithm~\ref{Algorithm1},
Algorithm~\ref{Algorithm1} outputs the estimator $\hat{{\theta}}^{\lambda, \delta,\ell,\mathfrak{j}}_n$
 which satisfies
 % the bound
\begin{align*}
	\E_\mathbb{P}\!\left[u(\hat{\theta}^{\lambda, \delta,\ell,\mathfrak{j}}_n)\right]-\inf_{\theta\in\R^d} u(\theta) < \varepsilon.
\end{align*}
\end{corollary}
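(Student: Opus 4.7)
The corollary follows directly from the non-asymptotic bound in Theorem~\ref{theorem:excess risk}. Given $\varepsilon>0$, my plan is to choose the parameters $\beta,\delta,\mathfrak{j},\lambda,n$ sequentially so that each of the five summands on the right-hand side of that bound is at most $\varepsilon/5$; Algorithm~\ref{Algorithm1} then simply encodes these explicit choices.

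The ordering of the choices is dictated by the dependencies among the constants $C_{i,\ldots}$ listed in Appendix~\ref{appendix:A}. First I would fix the inverse temperature $\beta>0$. Second, I would choose $\delta>0$ small enough that $C_{3,\delta,\beta}<\varepsilon/5$; this term represents the systematic smoothing/entropic error introduced by the log-sum-exp surrogate defining $H^{\delta,\ell,\mathfrak{j}}$ in \eqref{defn:grad_v_delta} and vanishes as $\delta\to 0$. Third, I would pick $\mathfrak{j}\in\N$ large enough that $\frac{\sqrt{m}}{2^{\mathfrak{j}}}(C_4+C_{5,\delta,\beta}+C_6)<\varepsilon/5$; since $C_4,C_{5,\delta,\beta},C_6$ have already been fixed in the previous steps, this amounts to requiring $\mathfrak{j}$ to exceed an explicit logarithmic threshold. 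Simultaneously I would require $\delta(\ell+\mathfrak{j})\log 2<\varepsilon/5$, which is the compatibility constraint between $\delta$ and $\mathfrak{j}$ discussed below. Fourth, I would choose the step size $\lambda\in(0,\lambda_{\max,\delta})$ small enough that $C_{2,\delta,\ell,\mathfrak{j},\beta}\lambda^{1/4}<\varepsilon/5$, which is possible as $\lambda\to 0$ with all other parameters fixed. Finally, with $\delta,\mathfrak{j},\lambda$ all determined, I would select $n$ large enough that $C_{1,\delta,\ell,\mathfrak{j},\beta}\exp(-c_{\delta,\beta}\lambda n/4)<\varepsilon/5$, which is always achievable because of the exponential decay of the first summand in $n$.

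The main obstacle is verifying that all these constraints can be met simultaneously, in particular the interplay between $\delta$ and $\mathfrak{j}$. The bound $\delta(\ell+\mathfrak{j})\log 2<\varepsilon/5$ gives an \emph{upper} bound on $\mathfrak{j}$ for fixed $\delta$, roughly $\mathfrak{j}<\varepsilon/(5\delta\log 2)-\ell$, while $\frac{\sqrt{m}}{2^\mathfrak{j}}(C_4+C_{5,\delta,\beta}+C_6)<\varepsilon/5$ gives a \emph{lower} bound of order $\log_2(\sqrt{m}(C_4+C_{5,\delta,\beta}+C_6)/\varepsilon)$. Since the lower bound grows only logarithmically in $1/\varepsilon$ and in $C_{5,\delta,\beta}$, whereas the upper bound grows like $1/\delta$, the two constraints are compatible whenever $\delta$ is chosen sufficiently small; in practice one may need to shrink $\delta$ below the naive choice from step two and then select $\mathfrak{j}$ in the resulting non-empty window. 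Once this compatibility is achieved, the remaining choices of $\lambda$ and $n$ decouple cleanly, so the plan terminates with an explicit quintuple $(\beta,\delta,\mathfrak{j},\lambda,n)$ that realises the prescribed accuracy $\varepsilon$.
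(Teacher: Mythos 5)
There is a genuine gap in your plan, and it stems from the claim in your second step that $C_{3,\delta,\beta}$ ``vanishes as $\delta\to 0$''. Looking at the explicit formula used in the paper's proof,
\begin{equation*}
C_{3,\delta,\beta}
= \frac{d+1}{2\beta}\log\left(1+\frac{b\beta}{d+1}\right)
+ \frac{d+1}{2\beta}\left(1+\log\left(\frac{(\tilde{L}_\delta-1)\,\E_{\mathbb{P}}[(1+|X_0|)^{2p}]}{a}\right)\right),
\end{equation*}
with $\tilde{L}_\delta=\mathfrak{C}_2/\delta+\mathfrak{C}_3$ from \eqref{const:tilde_l_delta}, this quantity actually \emph{diverges} (logarithmically) as $\delta\to 0$ with $\beta$ fixed, because the smoothed-gradient Lipschitz constant $L_\delta$ scales like $1/\delta$. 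So your proposed ordering --- fix $\beta$ first, then shrink $\delta$ to control $C_{3,\delta,\beta}$ --- cannot work: shrinking $\delta$ makes this summand worse, not better. The quantity that actually drives $C_{3,\delta,\beta}$ to zero is $\beta\to\infty$, not $\delta\to 0$.

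The paper's proof therefore uses a different and necessary ordering: $\mathfrak{j}$ is fixed \emph{first}, then $\delta$, then $\beta$, then $\lambda$, then $n$. This is only possible thanks to a preliminary algebraic splitting of $C_4+C_{5,\delta,\beta}$ (using Young's inequality) into a piece $C_4+\mathfrak{C}_4(a^{-1}+2b)$ that is independent of both $\delta$ and $\beta$, a piece $\mathfrak{C}_4(2\mathfrak{M}_1+1)\lambda_{\max,\delta}$ that vanishes as $\delta\to 0$, and a piece $2\mathfrak{C}_4(d+1)/\beta$ that vanishes as $\beta\to\infty$. The $\mathfrak{j}$-threshold is then determined by the first, parameter-free piece alone, which breaks exactly the $\delta$-versus-$\mathfrak{j}$ circularity you worried about at the end of your write-up: rather than choosing $\delta$ and then fitting $\mathfrak{j}$ into a window, one chooses $\mathfrak{j}$ independently of $\delta$ and then picks $\delta$ small enough for both $\delta(\ell+\mathfrak{j})\log 2<\varepsilon/10$ and the $\lambda_{\max,\delta}$-piece. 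This splitting step is the key idea missing from your plan.
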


\begin{proof}
The proof of Theorem~\ref{theorem:excess risk} and Corollary~\ref{corollary:main_result} can be found in Section~\ref{sec:proofOverview}.
\end{proof}
\vspace{-0.1cm}
\begin{algorithm}[!ht]
\caption{SGLD Algorithm for DRO problem \eqref{eqn:dro_problem}} \label{Algorithm1}

\KwInput{$\varepsilon>0$, $d\in\N$, $m\in \N$, $p\in[1,\infty)$, $\eta_1>0$, $\eta_2>0$, compact subset $\Xi\subseteq \R^m$, measurable function $U:\R^d\times \R^m\to \R$, i.i.d. data $(X_\mathfrak{n})_{\mathfrak{n}\in\N_0}\subset\R^m$ defined on $(\Omega, \mc{F},\mathbb{P})$ such that $\mathbb{P}\circ X_0^{-1}=\mu_0$, initialisation $\bar{\theta}_0\in\R^{d+1}$
} 
\KwOutput{Estimator $\hat{{\theta}}^{\lambda, \delta,\ell,\mathfrak{j}}_n$}
Set $M_\Xi:=\max_{x\in\Xi}|x|$\;
Set $L_\nabla,K_\nabla$ to be the constants given by Assumption \ref{assumption:3}\;
Set $\tilde{K}_\nabla:=\max\{K_\nabla,\max_{x\in\Xi}|U(0,x)|\}$\; 
Set $J_U$ to be the constant given by Assumption \ref{assumption:4}\;
Set $\iota:\R\to\R$ and $a_\iota,b_\iota$ to be the function and constants given in Definition~\ref{def:iota} and \eqref{eqn:dissipativity_cond}, respectively\;
Set $a:= \frac{\min\{\eta_1,\eta_2a_\iota\}}{2}$,  $b:=\eta_2b_\iota + \frac{2\left(K_\nabla+2^pM_\iota  M_\Xi^p\right)^2}{\min\{\eta_1,\eta_2 a_\iota\}}$\;
Set $c_{\delta,\beta}, C_{1,\delta,\ell,\mathfrak{j},\beta}, C_{2,\delta,\ell,\mathfrak{j},\beta}, C_{3,\delta,\beta}$ to be the constants given in Theorem \ref{theorem:excess risk}\;
Set $\mathfrak{C}_1$, $\mathfrak{C}_2$, $\mathfrak{C}_3$, $\tilde{L}_\delta$ to be the constants defined in \eqref{const:tilde_l_delta}\;
Set $\mathfrak{C}_4$, $\mathfrak{M}_1$, $\tilde{C}_4$, $C_{5,\delta,\beta}$, $C_6$ to be the constants defined in \eqref{const:c4_c5_c6}\;
Set $C_4:=\eqref{const:c4}$\;
Set $\lambda_{\max,\delta}:=\eqref{defn:step}$\;
Fix $\ell$ such that $\Xi\subset [-2^{\ell-1},2^{\ell-1})^m$ \label{line:1stpara}\;
Fix $\mathfrak{j}>\max\left\{\log_2\left(\frac{5\sqrt{m}(C_4+\mathfrak{C}_4(a^{-1}+2b))}{\varepsilon}\right), \log_2\sqrt{m}\right\}$\;
Fix $\delta \in\left(0, \min\left\{\frac{\varepsilon}{10m(\ell+\mathfrak{j})\log{2}},\frac{\mathfrak{C}_2}{\sqrt{a\mathfrak{C}_1}},\mathfrak{C}_2\sqrt{\frac{\varepsilon 2^\mathfrak{j}}{10\mathfrak{C}_1\mathfrak{C}_4(2\mathfrak{M}_1+1)\sqrt{m}}}\right\}\right)$\;
Fix $\beta > \max\left\{\frac{100(d+1)}{\varepsilon^2}, \frac{10(d+1)\left(1+\log\left(\frac{(\tilde{L}_\delta-1)(b+1)}{a}\right)\right)}{\varepsilon}, \frac{10\sqrt{m}\mathfrak{C}_4(d+1)}{\varepsilon 2^\mathfrak{j}}\right\}$\;
Fix $\lambda \in \left(0,\min\left\{\lambda_{\max,\delta}, \frac{\varepsilon^4}{625C_{2,\delta,\ell,\mathfrak{j},\beta}^4}\right\}\right)$\;
Fix $n>\max\left\{\frac{4}{c_{\delta,\beta}\lambda}
\log\left(\frac{10C_{1,\delta,\ell,\mathfrak{j},\beta}}{\varepsilon}\right),\frac{2}{a\lambda}\log\left(\frac{10C_6}{\varepsilon}\right)-1\right\}$\label{line:6thpara}\;
Set $H^{\delta,\ell,\mathfrak{j}}:=\eqref{defn:grad_v_delta}$\label{line:robustsgld1}\;
\For{$\mathfrak{n}=0,\cdots, n-1$} {
    Draw $Z_{\mathfrak{n}+1}\sim\mc{N}(0,I_{d+1})$\;
    Set $\hat{\bar{\theta}}^{\lambda, \delta, \ell, \mathfrak{j}}_{\mathfrak{n}+1}
:=\ \hat{\bar{\theta}}^{\lambda, \delta, \ell,\mathfrak{j}}_\mathfrak{n}-\lambda H^{\delta, \ell,\mathfrak{j}}(\hat{\bar{\theta}}_\mathfrak{n}^{\lambda, \delta, \ell,\mathfrak{j}},X_{\mathfrak{n}+1})+\sqrt{2\lambda\beta^{-1}}Z_{\mathfrak{n}+1}$\label{line:robustsgld2}\;
}
Set $\hat{{\theta}}^{\lambda, \delta, \ell, \mathfrak{j}}_{n}:=\text{first $d$ components of } \hat{\bar{\theta}}^{\lambda, \delta, \ell, \mathfrak{j}}_{n}$.
\end{algorithm}

\begin{remark}
Theorem~\ref{theorem:excess risk} demonstrates that the upper bound for the expected excess risk in \eqref{eqn:eerub} can be made arbitrarily small as shown in Corollary~\ref{corollary:main_result}, which is achieved by choosing sequentially $\ell, \mathfrak{j}, \delta, \beta, \lambda$, and~$n$ appropriately. Algorithm~\ref{Algorithm1} lines~\ref{line:1stpara}-\ref{line:6thpara} illustrate this parameter selection procedure, which ensures the generation of approximate minimisers that make the associated expected excess risk fall below the prescribed precision level $\varepsilon$ by implementing the robust SGLD algorithm as described in lines \ref{line:robustsgld1}-\ref{line:robustsgld2}.
\end{remark}

%
%
%}
%
%
\section{Application}\label{sec:Application}
As a concrete application of the general framework introduced in the previous section, we consider in this section a non-linear regression model involving a neural network. Our aim is to obtain the best mean-square estimator of the model when the training data is adversarially corrupted. We formulate the problem as a DRO problem and solve it using robust SGLD \eqref{eqn:sgld}-\eqref{defn:grad_v_delta}. We show in Proposition \ref{prop:regression_example} that the DRO problem satisfies our Assumptions~\ref{assumption:1}-\ref{assumption:4}, thus Theorem \ref{theorem:excess risk} provides a theoretical guarantee for the convergence of robust SGLD. Moreover, we illustrate the superior performance of our robust SGLD over vanilla SGLD \cite{welling2011bayesian} by comparing the mean squared loss on the test dataset which only consists of clean data samples drawn from the true distribution. This indicates that the distributionally robust formulation of optimisation problems associated with regression tasks can help mitigate the impact of outliers in the training data, see, e.g., \cite{chen2018robust,shafieezadeh2015distributionally} for more details. Finally, we conclude this section by providing further discussions on the numerical results. The code can be found under \href{https://github.com/tracyyingzhang/robust-SGLD}{https://github.com/tracyyingzhang/robust-SGLD}.\\
 
\paragraph{\textbf{DRO problem}} We consider a regression model given by
\begin{equation}\label{eq:regressionmodel}
y = \mathfrak{N}(\theta^*,z)+\hat{\epsilon},
\end{equation}
with the response variable $y\in\R$, the feature variable $z\in\R^{m-1}$, the regression coefficient $\theta^* \in \R^m$, and the error term $\hat{\epsilon}\in\R$, where $\mathfrak{N}:\R^m\times\R^{m-1}\to\R$ is the neural network given by
\[
\mathfrak{N}(\theta,z) =\sigma_1(\langle w,z\rangle+b_0)= \frac{1}{1+e^{-(\langle w,z\rangle+b_0)}}
\]
with $\theta = (w,b_0) \in \R^m, w\in\R^{m-1}, b_0\in \R$, and $\sigma_1$ being the sigmoid activation function. We aim to obtain an estimator of $\theta^*$ when the training data is adversarially perturbed and consists of samples drawn from some outlying distribution. To this end, we consider the DRO problem \eqref{eqn:dro_problem} where $U$ is given by
\begin{equation}\label{eq:DROexample_U}
U(\theta,x) = |y-\mathfrak{N}(\theta,z)|^2
\end{equation}
with $\theta\in\R^m$, $x=(z,y)\in\R^m$.\footnote{In this example, the dimension of the parameter of DRO problem \eqref{eqn:dro_problem} coincides with the dimension of the data points, i.e., $d=m$ in the notation of \eqref{eqn:dro_problem}.}
\begin{proposition}
\label{prop:regression_example}
Let Assumption~\ref{assumption:1} hold. Then, the function $U$ defined in \eqref{eq:DROexample_U} satisfies Assumptions \ref{assumption:3} and \ref{assumption:4}.
\end{proposition}
\begin{proof}
See Section~\ref{sec:proof_sec:Application}. %\hyperref[proof:cvar_example]{?}.
\end{proof}
By Proposition \ref{prop:regression_example}, we can use robust SGLD \eqref{eqn:sgld}-\eqref{defn:grad_v_delta} to solve the DRO problem under consideration, and Theorem \ref{theorem:excess risk} provides a theoretical guarantee for the convergence of our robust SGLD algorithm.\\

\begin{figure}[t]
\centering
\includegraphics[width=0.81\textwidth]{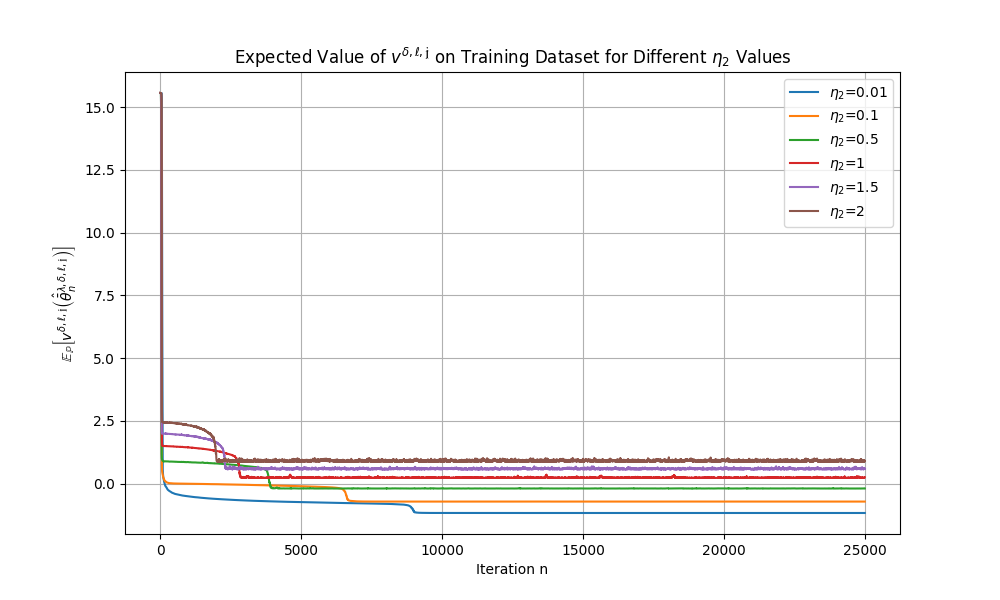}
\caption{Path of robust SGLD for different values of $\eta_2$}
\label{fig:robustsgldobjpath}
\end{figure}
\paragraph{\textbf{Simulation result}} 
Set $m=4$, $\theta^* = (w^*,b_0^*) = (-0.5, 0.5, 0.1, -0.2)$, and $q=0.3$. We consider a training set with $100q\%$ of the samples drawn from some outlying distribution and $100(1-q)\%$ of the samples drawn from a given distribution which we refer here as the ``true distribution''. The training data generation process is similar to that described in \cite{chen2018robust}:
\begin{enumerate}
\item First, we generate a dataset $\{x_i\}_{i=1}^{10000}= \{(y_i,z_i)\}_{i=1}^{10000}$ consisting of 10000 samples drawn from the true distribution. More precisely, for each $i$, $z_i\in\R^3$ is drawn from the uniform distribution on $[-1,1]^3$ and $y_i$ is computed by $y_i = \mathfrak{N}(\theta^*,z_i)+0.1\tilde{\epsilon}$ where $\tilde{\epsilon}\sim \text{Bernoulli}(1/2)$. \label{item:datagenerationstep1}
\item Then, for each $i$, we draw a value from $\text{uniform}[0,1]$. If it is less than $1-q$, we keep $x_i$ generated in step \ref{item:datagenerationstep1}. Otherwise, replace it with $\bar{x}_i = (\bar{y}_i,\bar{z}_i)$ where $\bar{z}_i$ is drawn from a uniform distribution on $[2,2.5]^3$ and $\bar{y}_i = y_i+\tilde{\epsilon}$. \label{item:datagenerationstep2}
\end{enumerate}
The test dataset $\{x_i^{\text{test}}\}_{i=1}^{5000}= \{(y_i^{\text{test}},z_i^{\text{test}})\}_{i=1}^{5000}$ consisting of 5000 samples is generated using the same method as described in step \ref{item:datagenerationstep1}. We note that all the samples included in the test set are drawn from the true distribution. Moreover, all generated training and test samples satisfy Assumption~\ref{assumption:1} with $\Xi = [-3,3]^4$ and $M_\Xi=6$. \\

We use robust SGLD \eqref{eqn:sgld}-\eqref{defn:grad_v_delta} to solve DRO problem \eqref{eqn:dro_problem} with $U$ defined in \eqref{eq:DROexample_U} using the training dataset. More precisely, by using the framework described in Section \ref{sec:proofOverview}, we apply robust SGLD \eqref{eqn:sgld}-\eqref{defn:grad_v_delta} to solve $z_{D,\ell,\mathfrak{j},\delta}$ defined in \eqref{eqn:dro_problem_dual_discrete_smoothed}, which can be viewed as the smoothed and discretised version of the dual of the aforementioned DRO problem \eqref{eqn:dro_problem} (see also Table \ref{tbl:summaryofdef}). Figure \ref{fig:robustsgldobjpath} depicts the path of $\mathbb{E}_\mathbb{P}\left[v^{\delta,\ell,\mathfrak{j}}\left(\hat{\bar{\theta}}^{\lambda, \delta,\ell,\mathfrak{j}}_n\right)\right]$ for different values of $\eta_2$, where $v^{\delta,\ell,\mathfrak{j}}$ in \eqref{defn:small_v_delta_discrete} is the objective function of $z_{D,\ell,\mathfrak{j},\delta}$ and $\hat{\bar{\theta}}^{\lambda, \delta,\ell,\mathfrak{j}}_n$ denotes the $n$-th iteration of our robust SGLD algorithm \eqref{eqn:sgld}-\eqref{defn:grad_v_delta}. We note that the paths in Figure \ref{fig:robustsgldobjpath} stabilise, supporting the result in Proposition \ref{prop:excess_risk_discrete}, hence Theorem \ref{theorem:excess risk}.\\

\begin{table}[t]
\begin{tabularx}{0.99\textwidth}
{ 
	>{\hsize=0.4\hsize\linewidth=\hsize}X|
	>{\hsize=0.8\hsize\linewidth=\hsize}X
	>{\hsize=1.8\hsize\linewidth=\hsize}X
} 
  Parameter      & Value                                                 & Interpretation                                                                                                                                                                                                                                                                           
 \\
   \hline
  $m$            & $4$                                                   & Dimensionality of data points and of the parameters of the DRO problem \eqref{eqn:dro_problem}.\\
    $\mu_0$        & Empirical measure of training data points & Reference probability measure for distribution of training data points \\
  $p$            & $2$                                                   & Controls convexity of cost of transportation between true distribution of data points and given reference distribution.                                                                                                                                                                 \\
  $\eta_1$       & $10^{-3}$                                                 &  Controls regularisation constant in DRO problem. Smaller values of $\eta_1$ impose less regularisation.\\
  % on the minimisation.\\
  $\eta_2$       & Various                                                       &  Controls penalty imposed on the distance between any distribution of data points and given reference distribution. Larger values of $\eta_2$ impose smaller penalty.\\
  $\bar{\theta}_0$     & $(-2,-2,-2,-2,0)$                                                   & Initial condition $\bar{\theta}_0 = (\theta_0,\alpha_0)$ of robust SGLD.                                                                                                                                                                                                                                                            \\
  $n$            & $25000$                                                & Number of algorithm iterations.                                                                                                                                                                                                                                                            \\
  $\lambda$      & $0.01$                                                & Step size of algorithm in time space.                                                                                                                                                                                                                                                      \\
  $\beta$        & $10^9$                                                  &  ``Mixing parameter'' controlling amount of stochasticity in each algorithm iteration. Larger values of $\beta$ generate less randomness at each iteration.\\
  $\delta$       & $0.1$                                                 & Nesterov's smoothing tolerance.                                                                                                                                                                                                                                                           \\  
  $\Xi$          & $[-3, 3]^4$                               & Support of data points in the training and test set.                                                                                                                                                                                                                                                                  \\
  $\ell$         & $3$                                                   & Controls intersection between support of data points traversed in the algorithm and actual support of data points. Must be large enough relative to $\Xi$ to cover entire support. \\
  $\mathfrak{j}$ & $1$                                                   & Controls discretisation of support of data points as a finite grid. Larger values of $\mathfrak{j}$ yield finer meshes of order $\mathcal{O}(2^{-\mathfrak{j}})$.\\
 \caption{Parameter values used in the numerical simulations}\label{tbl:params}
\end{tabularx}
\end{table}

To demonstrate the efficacy of distributionally robust formulation in mitigating the effect of outliers, we compare the performance of robust SGLD against vanilla SGLD\footnote{The vanilla SGLD estimator of $\theta^*$ is obtained by applying SGLD \cite{welling2011bayesian} to the non-robust stochastic optimisation problem:
\[
%\begin{equation}
%\label{eqn:dro_problem-Intro}
	\text{minimise}\quad  \R^d\ni \theta\mapsto u(\theta):=\E \left[|Y-\mathfrak{N}(\theta,Z)|^2\right],
%\end{equation}
\]
where $X = (Y,Z)$ with $Z, Y$ being the $\R^3$-valued input variable and $\R$-valued response variable, respectively. %The training dataset consists of realisations of $X$ obtained using steps \ref{item:datagenerationstep1} and \ref{item:datagenerationstep2}.
} using the mean squared loss computed over the test dataset $\{x_i^{\text{test}}\}_{i=1}^{5000}$. We run SGLD and robust SGLD with different values of $\eta_2$ for $100$ times. For each run, we set the number of iterations to be $25000$ with other hyperparameters specified in Table \ref{tbl:params}. We then compute average training times (in seconds) over $100$ runs for each algorithm. Moreover, for each run, we record the number of iterations required for each algorithm to first reach a value within $1\%$ of the reference value, and then pick the largest one, denoted by $n_{\text{es}}$, over 100 runs. Here, the reference value is the mean squared loss computed using the optimal parameter $\theta^*$ on the test dataset. If $n_{\text{es}}<25000$ for an algorithm, we report the corresponding running time up to $n_{\text{es}}$ and the value of mean squared loss at $n_{\text{es}}$. Otherwise, if an algorithm never reaches a value within $1\%$ of the reference value after 25000 iterations, we report ``NA'' for $n_{\text{es}}$ and for its corresponding running time, moreover, we report the value that is closest to the reference value (among 25000 iterations) as its corresponding mean squared loss. We summarise these values in Table \ref{tbl:mseloss} and draw the path of mean squared loss for SGLD and robust SGLD in Figure \ref{fig:mseloss}. The numerical results indicate the superior performance of robust SGLD over vanilla SGLD for large values of $\eta_2$. This coincides with the fact that larger values of $\eta_2$ impose smaller penalty in view of the formulation of DRO problem \eqref{eqn:dro_problem}, hence allowing optimisation under distributions that deviate from the reference (or empirical) measure, thus reducing the impact of outlying data points.\\

\begin{figure}[t]
\begin{subfigure}{\textwidth}\centering
    \includegraphics[width=0.81\textwidth]{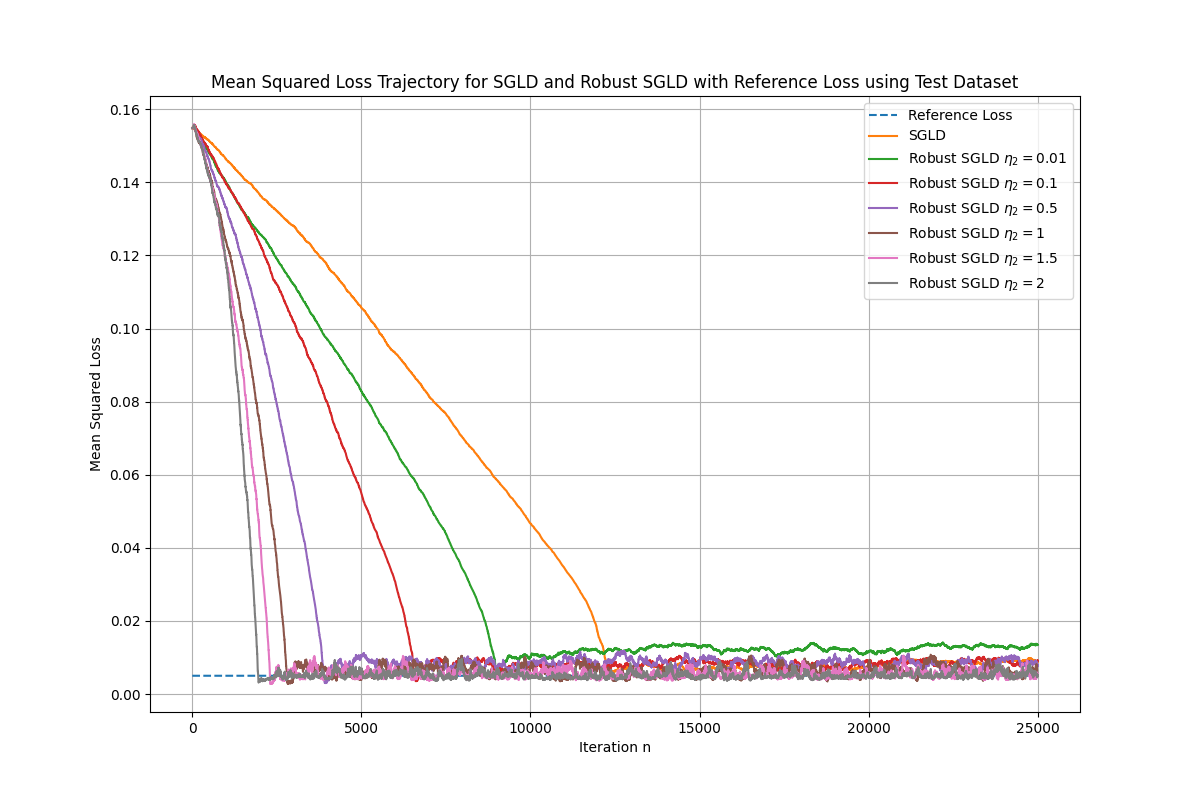}
\end{subfigure}
\begin{subfigure}{\textwidth}\centering
    \includegraphics[width=0.81\textwidth]{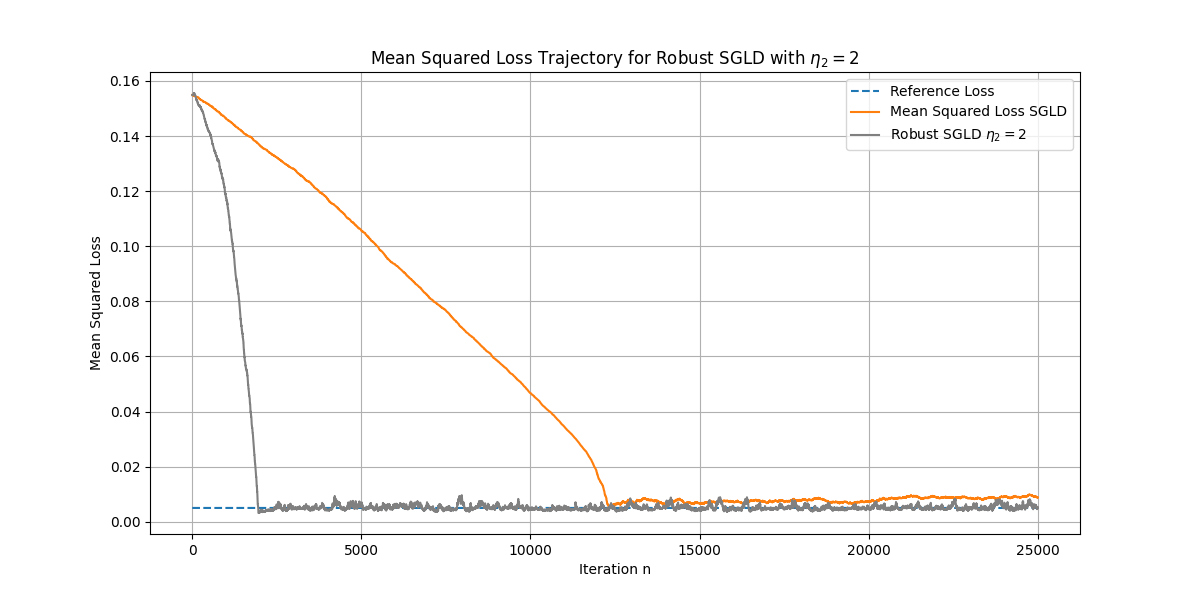}
\end{subfigure}
\caption{Mean squared loss for vanilla SGLD and robust SGLD on test dataset}\label{fig:mseloss}
\end{figure}

\begin{table}[t]
\centering
\begin{tabularx}{\textwidth}{>{\hsize=1.45\hsize}X|
>{\centering\arraybackslash\hsize=1\hsize}X|
*{3}{>{\centering\arraybackslash\hsize=0.85\hsize}X}
}
Method &Average Training Time (s) &Iterations to $1\%$ Precision $n_{\text{es}}$& {Time to $1\%$ Precision (s)} & {Mean Squared Loss} \\
\hline
Robust SGLD ($\eta_2=0.01$) 	& 122.73 	& NA	& NA 	& 0.005441  \\
\hline
Robust SGLD ($\eta_2=0.1$)  	& 116.54 	& NA	& NA 	& 0.004914  \\
\hline
Robust SGLD ($\eta_2=0.5$) 	& 114.80 	& 20353	& 93.63  	& 0.005000  \\
\hline
Robust SGLD ($\eta_2=1.0$)	& 114.28 	& 4044	& 18.54 	& 0.005010  \\
\hline
Robust SGLD ($\eta_2=1.5$)  	& 113.99	& 2795	& 12.99  	& 0.004979  \\
\hline
Robust SGLD ($\eta_2=2.0$)  	& 113.85 	& 2427	& 10.93  	& 0.005031  \\
\hline
Vanilla SGLD             		& 0.45 	& NA	& NA 	& 0.005200  \\
\hline
Reference          			& --- 	& ---	& --- 	& 0.005014 \\
\caption{Average training times for SGLD and robust SGLD to complete 25000 iterations over 100 runs, together with the numbers of iterations and running times required to reach within $1\%$ of the reference value. Mean squared losses are the values at $n_{\text{es}}$ or the closest value to reference computed using test dataset. $1\%$ difference from reference is $[0.004964,0.005064]$.}
\label{tbl:mseloss}
\end{tabularx}
\end{table}

\paragraph{\textbf{Conclusions}} In this section, we consider the problem of identifying the best non-linear mean-square estimator associated with the regression model \eqref{eq:regressionmodel} when the training data is corrupted. We formulate the problem as a DRO problem using the framework described in Section \ref{sec:AssumpMainRes} and solve it using robust SGLD \eqref{eqn:sgld}-\eqref{defn:grad_v_delta}. We demonstrate both theoretically using Proposition \ref{prop:regression_example} (hence Theorem \ref{theorem:excess risk}) and numerically through Figure \ref{fig:robustsgldobjpath} that robust SGLD can be used to solve the DRO problem under consideration. Furthermore, %numerical results in Figure \ref{fig:mseloss} illustrate the superiority of robust SGLD estimator over vanilla SGLD estimator when used to compute mean squared loss on test dataset including only clean samples. In addition, 
we compare the performance of our robust SGLD against vanilla SGLD. As indicated in Figure \ref{fig:mseloss} and Table \ref{tbl:mseloss}, robust SGLD outperforms vanilla SGLD in terms of test accuracy (measured using the mean squared loss), although at the cost of a slower training speed.

\section{Proof Overview of Main Results}\label{sec:proofOverview}
In this section, we present an overview of the proof for obtaining the non-asymptotic upper bound on the excess risk of our proposed robust SGLD algorithm stated in Theorem \ref{theorem:excess risk}. The proof comprises three main steps. First, we make use of a duality result in \cite{bartl2017computational} to express the distributionally robust optimisation problem of \eqref{eqn:dro_problem} in its dual form. After which, we reduce the problem from the compact support $\Xi$ of the observed data $X$ to the finite grid $\Xi\cap \mathbb{K}_{\ell,\mathfrak{j}}^m$. Finally, we obtain the convergence bound of the excess risk of the robust SGLD algorithm defined on this finite grid through Nesterov's smoothing technique and the duality result of Theorem \ref{theorem:wass_duality}.

\subsection{Dual Problem Formulation} \label{subsec:dual_problem_formulation}
Recall that our main problem of interest was defined in \eqref{eqn:dro_problem} and can be stated as
\begin{align}
z_P :=&\ \inf_{\theta\in\R^d} u(\theta)\nonumber\\
=&\ \inf_{\theta\in\R^d}\left\{\sup_{\mu\in \mc{P}(\Xi)}\left(\int_{\Xi} U(\theta, x)\ \dee\mu(x)-\frac{d_c^2(\mu_0,\mu)}{2\eta_2}\right)+\frac{\eta_1}{2}|\theta|^{2}\right\}.\label{defn:zP}
\end{align}

The first step of our proof involves expressing the optimisation problem of \eqref{eqn:dro_problem} in dual form. To this end, we make use of the following duality result which is an immediate consequence of\footnote{We also refer to   \cite{blanchet2019quantifying}, \cite{gao2023distributionally}, \cite{mohajerin2018data}, and \cite{zhao2018data} for similar duality results.} Theorem 2.4 of \cite{bartl2017computational}.

\begin{theorem}[\cite{bartl2017computational}]
\label{theorem:wass_duality}
Let Assumption~\ref{assumption:1} hold and let $c:\Xi\times\Xi\to\R_{\geq 0}$ and $\varphi:\R_{\geq 0}\to\R_{\geq 0}$ be cost and penalty functions in the sense of \cite{bartl2017computational}, respectively. Moreover, let  $\mu_0\in\mc{P}(\Xi)$ and let $U:\R^d\times \Xi\to\R$ be a measurable function such that for every $\theta\in\R^d$, $x\mapsto U(\theta,x)$ is in $L^1(\mu_0)$ and bounded from below. Then the following duality result holds: for every $\theta\in\R^d$:
\begin{align}
\begin{split}\label{eqn:duality_theorem}
&\ \sup_{\mu\in\mc{P}(\Xi)}\left\{\int_{\Xi} U(\theta,x)\ \dee \mu(x)-\varphi(d_c(\mu_0,\mu))\right\} \\
=&\ \inf_{\mathfrak{a}\geq 0}\left\{\varphi^*(\mathfrak{a})+\int_{\Xi}\sup_{y\in\Xi}\left\{U(\theta,y)-\mathfrak{a} c(x,y)\right\}\ \dee\mu_0(x)\right\},
\end{split}
\end{align}
where $\varphi^*$ denotes the convex conjugate of $\varphi$.
\end{theorem}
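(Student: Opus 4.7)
\textbf{Proof proposal for Theorem~\ref{theorem:wass_duality}.} The result is stated as an immediate consequence of Theorem~2.4 of \cite{bartl2017computational}, so the shortest route is to fix $\theta\in\R^d$, set $f(\cdot):=U(\theta,\cdot)$, and verify that the hypotheses of that theorem are in force: $\Xi$ is compact by Assumption~\ref{assumption:1}; $c$ and $\varphi$ are a cost and a penalty function in the required sense (by assumption); and $f\in L^1(\mu_0)$ is bounded below (by assumption). Applying the cited theorem to $f$ then yields the dual representation on the right-hand side of \eqref{eqn:duality_theorem}. The proof reduces to checking these hypotheses and invoking the black box.

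If one prefers a self-contained argument, I would proceed in three steps. First, use Fenchel--Moreau duality for the convex, lower semicontinuous penalty $\varphi$ to write
\begin{equation*}
-\varphi(d_c(\mu_0,\mu)) \;=\; \inf_{\mathfrak{a}\geq 0}\bigl\{\varphi^*(\mathfrak{a}) - \mathfrak{a}\,d_c(\mu_0,\mu)\bigr\},
\end{equation*}
and substitute into the LHS of \eqref{eqn:duality_theorem}. Second, exchange $\sup_\mu$ and $\inf_{\mathfrak{a}\geq 0}$ (the map $\mathfrak{a}\mapsto \varphi^*(\mathfrak{a})-\mathfrak{a}\,d_c(\mu_0,\mu)$ is affine, hence concave, in $\mathfrak{a}$ and the objective is concave and upper semicontinuous in $\mu$ by Assumption~\ref{assumption:1}), yielding
\begin{equation*}
\inf_{\mathfrak{a}\geq 0}\Bigl\{\varphi^*(\mathfrak{a}) + \sup_{\mu\in\mc{P}(\Xi)}\bigl[\textstyle\int_\Xi f\,\dee\mu - \mathfrak{a}\,d_c(\mu_0,\mu)\bigr]\Bigr\}.
\end{equation*}
Third, for each fixed $\mathfrak{a}\geq 0$, rewrite $d_c(\mu_0,\mu)$ via couplings, relax the constraint on the second marginal so that the inner problem becomes a maximisation over couplings $\pi$ with first marginal $\mu_0$ of $\int(f(y)-\mathfrak{a}c(x,y))\,\dee\pi(x,y)$, and disintegrate $\pi=\mu_0\otimes\kappa_x$: the optimum is attained by choosing $\kappa_x$ to concentrate (via a measurable selection argument) on a maximiser of $y\mapsto f(y)-\mathfrak{a}c(x,y)$, producing
\begin{equation*}
\int_\Xi \sup_{y\in\Xi}\bigl\{U(\theta,y)-\mathfrak{a}\,c(x,y)\bigr\}\,\dee\mu_0(x).
\end{equation*}
Combining the three steps gives \eqref{eqn:duality_theorem}.

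The main obstacle in the self-contained version is the minimax swap together with the measurability and integrability of $x\mapsto\sup_{y\in\Xi}\{U(\theta,y)-\mathfrak{a}\,c(x,y)\}$. Measurability follows from a standard measurable selection result once one uses compactness of $\Xi$ and continuity of $c(x,\cdot)$ implicit in the notion of cost function of \cite{bartl2017computational}; integrability follows from $U(\theta,\cdot)\in L^1(\mu_0)$, the lower bound on $U(\theta,\cdot)$, and the non-negativity of $c$. Since all of this is already packaged in Theorem~2.4 of \cite{bartl2017computational}, my actual write-up will proceed by verification of hypotheses and citation, keeping the self-contained derivation above only as motivation.
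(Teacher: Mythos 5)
Your proposal matches the paper exactly: the paper gives no separate proof of Theorem~\ref{theorem:wass_duality} and presents it, as you note, as an immediate consequence of Theorem~2.4 of \cite{bartl2017computational}, so checking the hypotheses and citing the result is precisely the intended argument. Your self-contained sketch (Fenchel--Moreau conjugation of $\varphi$, minimax swap, disintegration of couplings with measurable selection) is a correct summary of what goes into the cited theorem, but it is correctly relegated to motivation rather than being the proof you actually write.
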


For our problem of interest as stated in \eqref{eqn:dro_problem}, the choices of cost and penalty functions are $c(x,x'):=|x-x'|^p$ and $\varphi(x):=\frac{x^2}{2\eta_2}$, respectively, where $p\in [1,\infty)$ and $\eta_2>0$. By applying the duality result of Theorem \ref{theorem:wass_duality}, we obtain the equivalent dual formulation of the problem \eqref{eqn:dro_problem} as
\begin{align}
\label{eqn:dro_problem_dual}
z_D
:=&\ \inf_{\theta\in\R^d}\inf_{\mathfrak{a}\geq 0}\left\{\int_{\Xi}\sup_{y\in\Xi}\left\{U(\theta,y)-\mathfrak{a} |x-y|^p\right\}\ \dee\mu_0(x)+\frac{\eta_1}{2}|\theta|^{2}+\frac{\eta_2}{2}|\mathfrak{a}|^2\right\},
\end{align}
such that strong duality $z_P=z_D$ holds. The purpose of obtaining this dual form of the problem is that, after applying the transformation $\mathfrak{a} = \iota(\alpha)$, where $\iota:\R\to\R_{\geq 0}$ is a function given in Definition \ref{def:iota}, the dual problem can be expressed in the form of a standard, i.e. non-distributionally robust, stochastic optimisation problem to which the SGLD algorithm of \cite{zhang2023nonasymptotic} can be directly applied.\\

To see this clearly, we define for every $\bar{\theta}:=(\theta,\alpha)\in\R^d\times \R$ and $x \in \Xi$
\begin{align}\label{defn:small_v}
\begin{split}
v(\bar{\theta}) := &\ \int_\Xi \tilde{V}(\bar{\theta}, x)\ \dee \mu_0(x),\\
\tilde{V}(\bar{\theta}, x) :=&\ \sup_{y\in\Xi}\left\{U(\theta,y)-\iota(\alpha) |x-y|^p\right\}+\frac{\eta_1}{2}|\theta|^{2}+\frac{\eta_2}{2}|\iota(\alpha)|^2,
\end{split}
\end{align}
such that we have
\begin{align}
z_D = \inf_{\bar{\theta}\in\R^{d+1}}v(\bar{\theta}) =z_P\label{eqn:dro_problem_dual_transformed}
\end{align}
by the surjectivity of $\iota$. The optimisation problem \eqref{eqn:dro_problem_dual_transformed} is a standard stochastic optimisation problem over the whole domain $\R^{d+1}$ in $\bar{\theta}:=(\theta,\alpha)$ to which the SGLD algorithm of \cite{zhang2023nonasymptotic} can be applied.

\subsection{Reduction to a Finite Grid} \label{subsec:finite_grid} 
The dual problem $z_D$ as stated in \eqref{eqn:dro_problem_dual} involves an observed data variable $X$ which has compact support $\Xi$ in $\R^m$. The next step of the proof involves reducing the dual problem $z_D$ to a discretised version $z_{D,\ell,\mathfrak{j}}$, to be formulated subsequently, where the observed data has finite support in $\R^m$. The quadrature error $|z_D-z_{D,\ell,\mathfrak{j}}|$ is then controlled. To this end, we recall, given positive integers $\ell,\mathfrak{j}>0$, the definition of the set of dyadic rationals
\begin{align*}
\K_{\ell,\mathfrak{j}}:= \left\{-2^{\ell-1},-2^{\ell-1}+\frac{1}{2^\mathfrak{j}},\cdots,2^{\ell-1}-\frac{1}{2^\mathfrak{j}} \right\}
\end{align*}
stated in \eqref{defn:dyadic}, and that we have previously fixed a $\mathfrak{j}\in\N$ and an $\ell\in\N$ large enough such that $\Xi\subseteq [-2^{\ell-1},2^{\ell-1})^m$. Note that the finite grid $\Xi\cap \K_{\ell,\mathfrak{j}}^m$ is the set on which the robust SGLD algorithm \eqref{eqn:sgld} is defined. In addition, we define, for each $\boldsymbol{i}=(i_1,\cdots, i_m)\in\K_{\ell,\mathfrak{j}}^m$, the set
\begin{align}\label{eqn:qij}
Q_{\boldsymbol{i},\mathfrak{j}}:=\left[i_1,i_1+\frac{1}{2^\mathfrak{j}}\right)\times \left[i_2,i_2+\frac{1}{2^\mathfrak{j}}\right)\times\cdots\times \left[i_m,i_m+\frac{1}{2^\mathfrak{j}}\right)%\label{defn:dyadic_intervals},
\end{align}
such that
\begin{align*}%\label{eqn:qijU}
\bigcupdot_{\boldsymbol{i}\in\K_{\ell,\mathfrak{j}}^m}Q_{\boldsymbol{i},\mathfrak{j}}=[-2^{\ell-1},2^{\ell-1})^m\supseteq \Xi.
\end{align*}

The reference probability measure $\mu_0\in\mc{P}(\Xi)$ can then be extended to a probability measure $\mu_{0,\ell}\in\mc{P}([-2^{\ell-1},2^{\ell-1})^m)$, defined by 
\begin{align*}%\label{eqn:probmeasure}
\mu_{0,\ell}(B):=\mu_0(B\cap \Xi),\qquad B\in\mc{B}([-2^{\ell-1},2^{\ell-1})^m).%\label{defn:mu0_extension}
\end{align*}

Then, by applying a quadrature procedure, we can discretise $\mu_{0,\ell}$ to the finite grid $\K_{\ell,\mathfrak{j}}^m$, that is, we define the discrete probability measure $\mu_{0,\ell,\mathfrak{j}}\in\mc{P}(\K_{\ell,\mathfrak{j}}^m)$ by
\begin{align}\label{eqn:dprobmeasure}
\mu_{0,\ell,\mathfrak{j}}(\{\boldsymbol{i}\}) := \mu_{0,\ell}(Q_{\boldsymbol{i},\mathfrak{j}}),\qquad \boldsymbol{i}\in \K_{\ell,\mathfrak{j}}^m.
\end{align}
By defining the function $[\cdot]_\mathfrak{j}:\R^m \to (2^{-\mathfrak{j}}\mathbb{Z})^m$ by
\begin{align}\label{eqn:gridfun}
(x_1,\cdots,x_m)=:x\mapsto [x]_\mathfrak{j} = \left(\frac{\lfloor 2^\mathfrak{j} x_1\rfloor}{2^\mathfrak{j}},\cdots,\frac{\lfloor 2^\mathfrak{j} x_m\rfloor}{2^\mathfrak{j}}\right),
\end{align}
one may specify the discretised version of the primal problem \eqref{defn:zP} as
\begin{align}
\text{minimise } \R^d\ni\theta \mapsto  u^{\ell,\mathfrak{j}}(\theta)\nonumber
\end{align}
with
\begin{align}
 u^{\ell,\mathfrak{j}}(\theta) := \sup_{\mu\in \mc{P}(\Xi\cap\mathbb{K}_{\ell,\mathfrak{j}}^m)}\left(\int_{\Xi\cap\mathbb{K}_{\ell,\mathfrak{j}}^m} U(\theta, x)\ \dee\mu(x)-\frac{1}{2\eta_2}d_c^2(\mu_{0,\ell,\mathfrak{j}},\mu)\right)+\frac{\eta_1}{2}|\theta|^{2},\label{defn:u_discrete}
\end{align}
and
\begin{align}
z_{P,\ell,\mathfrak{j}}:=&\ \inf_{\theta\in\R^d}u^{\ell,\mathfrak{j}}(\theta)\nonumber\\
=&\ \inf_{\theta\in\R^d}\sup_{\mu\in \mc{P}(\Xi\cap\mathbb{K}_{\ell,\mathfrak{j}}^m)}\left(\int_{\Xi\cap\mathbb{K}_{\ell,\mathfrak{j}}^m} U(\theta, x)\ \dee\mu(x)-\frac{1}{2\eta_2}d_c^2(\mu_{0,\ell,\mathfrak{j}},\mu)\right)+\frac{\eta_1}{2}|\theta|^{2}.\label{defn:zPlj}
\end{align}

We also define the discretised version of the dual problem \eqref{eqn:dro_problem_dual} as
\begin{align}
z_{D,\ell,\mathfrak{j}}
:=&\ \inf_{\theta\in\R^d}\inf_{\mathfrak{a}\geq 0}\left\{\int_{\Xi}\sup_{y\in\Xi}\left\{U(\theta,[y]_\mathfrak{j})-\mathfrak{a} |[x]_\mathfrak{j}-[y]_\mathfrak{j}|^p\right\}\ \dee\mu_0(x)+\frac{\eta_1}{2}|\theta|^{2}+\frac{\eta_2}{2}|\mathfrak{a}|^2\right\}.\label{eqn:dro_problem_discrete}
\end{align}
The following lemma enables us to explicitly represent $z_{D,\ell,\mathfrak{j}}$ as an optimisation problem that lives on a discrete probability space.
\begin{lemma}
\label{lemma:finite_grid}
The discretised version $z_{D,\ell,\mathfrak{j}}$ of the dual problem $z_D$ in \eqref{eqn:dro_problem_dual}, given by \eqref{eqn:dro_problem_discrete}, has the equivalent representation
\begin{align}
z_{D,\ell,\mathfrak{j}}=\inf_{\theta\in\R^d}\inf_{\mathfrak{a}\geq 0}\left\{\int_{\K_{\ell,\mathfrak{j}}^m}\max_{y\in\Xi\cap\K_{\ell,\mathfrak{j}}^m}\left\{U(\theta,y)-\mathfrak{a} |x-y|^p\right\}\ \dee\mu_{0,\ell,\mathfrak{j}}(x)+\frac{\eta_1}{2}|\theta|^{2}+\frac{\eta_2}{2}|\mathfrak{a}|^2\right\}.\label{eqn:dro_problem_discrete_alt}
\end{align}
\end{lemma}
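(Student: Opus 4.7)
The plan is to reduce the discretised dual problem \eqref{eqn:dro_problem_discrete} to \eqref{eqn:dro_problem_discrete_alt} through two independent moves that act separately on the inner supremum and the outer integral, and then combine them.

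First, I would rewrite the inner supremum. For fixed $\theta \in \R^d$, $\mathfrak{a} \geq 0$, and $x \in \Xi$, the quantity
\[
\sup_{y\in\Xi}\{U(\theta, [y]_\mathfrak{j}) - \mathfrak{a}\lbrb{[x]_\mathfrak{j} - [y]_\mathfrak{j}}^p\}
\]
depends on $y$ only through $[y]_\mathfrak{j}$. Because $\Xi \subseteq [-2^{\ell-1}, 2^{\ell-1})^m$, the image $\{[y]_\mathfrak{j}: y\in \Xi\}$ lies in $\K_{\ell,\mathfrak{j}}^m$. Moreover, for any $z \in \Xi \cap \K_{\ell,\mathfrak{j}}^m$, we have $[z]_\mathfrak{j}=z$, so $z$ is attained in the image of $[\cdot]_\mathfrak{j}|_\Xi$. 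Hence the supremum collapses to a finite maximum
\[
\sup_{y\in\Xi}\{U(\theta, [y]_\mathfrak{j}) - \mathfrak{a}\lbrb{[x]_\mathfrak{j} - [y]_\mathfrak{j}}^p\} = \max_{y\in\Xi\cap\K_{\ell,\mathfrak{j}}^m}\{U(\theta, y) - \mathfrak{a}\lbrb{[x]_\mathfrak{j} - y}^p\}.
\]

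Second, I would address the outer integral using the fact that $\mu_{0,\ell,\mathfrak{j}}$ is, by construction, the pushforward of $\mu_0$ (equivalently $\mu_{0,\ell}$) under the quantisation map $[\cdot]_\mathfrak{j}$. Concretely, for any bounded measurable $f:\K_{\ell,\mathfrak{j}}^m\to\R$, a direct partitioning of $\Xi$ into the sets $\Xi\cap Q_{\boldsymbol{i},\mathfrak{j}}$ (which is well-defined since $\Xi\subseteq\bigcupdot_{\boldsymbol{i}\in\K_{\ell,\mathfrak{j}}^m}Q_{\boldsymbol{i},\mathfrak{j}}$) gives
\[
\int_\Xi f([x]_\mathfrak{j})\,\dee\mu_0(x) = \sum_{\boldsymbol{i}\in\K_{\ell,\mathfrak{j}}^m} f(\boldsymbol{i})\,\mu_0(\Xi\cap Q_{\boldsymbol{i},\mathfrak{j}}) = \sum_{\boldsymbol{i}\in\K_{\ell,\mathfrak{j}}^m} f(\boldsymbol{i})\,\mu_{0,\ell,\mathfrak{j}}(\{\boldsymbol{i}\}) = \int_{\K_{\ell,\mathfrak{j}}^m} f(x)\,\dee\mu_{0,\ell,\mathfrak{j}}(x).
\]
Taking $f(x):=\max_{y\in\Xi\cap\K_{\ell,\mathfrak{j}}^m}\{U(\theta,y)-\mathfrak{a}|x-y|^p\}$ and substituting into the expression from step one produces exactly \eqref{eqn:dro_problem_discrete_alt}.

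The main delicate point will be step one, namely verifying that as $y$ ranges over $\Xi$ the set of attained values $[y]_\mathfrak{j}$ coincides with $\Xi\cap\K_{\ell,\mathfrak{j}}^m$ (and not merely with a superset or subset living in $\K_{\ell,\mathfrak{j}}^m\setminus\Xi$), since otherwise $U(\theta,[y]_\mathfrak{j})$ may be evaluated outside the domain of $U$. This is handled by noting that the grid points already inside $\Xi$ are fixed by $[\cdot]_\mathfrak{j}$, and that any grid cell $Q_{\boldsymbol{i},\mathfrak{j}}$ intersecting $\Xi$ contributes the same objective value via any representative $y\in\Xi\cap Q_{\boldsymbol{i},\mathfrak{j}}$; together these two observations make the reduction to $\Xi\cap\K_{\ell,\mathfrak{j}}^m$ legitimate. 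Once this is pinned down, the remainder of the argument is purely mechanical bookkeeping.
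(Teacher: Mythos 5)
Your proof follows essentially the same route as the paper's: collapse the inner supremum to a finite maximum over $\Xi\cap\K_{\ell,\mathfrak{j}}^m$ by using that the integrand depends on $y$ only through $[y]_\mathfrak{j}$, then partition $\Xi$ into the dyadic cells $Q_{\boldsymbol{i},\mathfrak{j}}$ (on each of which $[\cdot]_\mathfrak{j}$ is constant) to convert $\int_\Xi(\cdot)\,\dee\mu_0$ into $\int_{\K_{\ell,\mathfrak{j}}^m}(\cdot)\,\dee\mu_{0,\ell,\mathfrak{j}}$, which is exactly the paper's chain of equalities. Packaging the second move as an abstract pushforward identity for the quantisation map is a minor stylistic difference, not a different argument.
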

\begin{proof}
See Section~\ref{sec:proof_sec:proofOverview}. %\hyperref[proof:finite_grid]{?}.
\end{proof}

As a discrete analogue of \eqref{defn:small_v}, we define, for any $\bar{\theta}:=(\theta,\alpha)\in\R^d\times \R$ and any $x\in\Xi$, the quantities
\begin{align}
\begin{split}\label{defn:small_v_discrete}
v^{\ell,\mathfrak{j}}(\bar{\theta}) := &\ \int_{\Xi\cap\mathbb{K}_{\ell,\mathfrak{j}}^m} \tilde{V}^{\ell,\mathfrak{j}}(\bar{\theta}, x)\ \dee \mu_{0,\ell,\mathfrak{j}}(x),\\
\tilde{V}^{\ell,\mathfrak{j}}(\bar{\theta}, x) :=&\ \max_{y\in\Xi\cap\mathbb{K}_{\ell,\mathfrak{j}}^m}\left\{U(\theta,y)-\iota(\alpha) |x-y|^p\right\}+\frac{\eta_1}{2}|\theta|^{2}+\frac{\eta_2}{2}|\iota(\alpha)|^2.
\end{split}
\end{align}

Then, by the duality result of Theorem \ref{theorem:wass_duality}, we obtain for every $\theta \in \R^d$ that
\begin{equation}\label{eq:discrete-duality}
u^{\ell,\mathfrak{j}}(\theta)
=\inf_{\alpha \in \R}\bigg(\int_{\Xi\cap\mathbb{K}_{\ell,\mathfrak{j}}^m} \tilde{V}^{\ell,\mathfrak{j}}((\theta,\alpha), x)\ \dee \mu_{0,\ell,\mathfrak{j}}(x)\bigg).
\end{equation}	

This and the representation of $z_{D,\ell,\mathfrak{j}}$ given by \eqref{eqn:dro_problem_discrete_alt} in Lemma \ref{lemma:finite_grid} 
%and the duality result of Theorem \ref{theorem:wass_duality}, 
hence imply the relation
\begin{align}
z_{D,\ell,\mathfrak{j}} 
=&\ \inf_{\bar{\theta}\in\R^{d+1}}v^{\ell,\mathfrak{j}}(\bar{\theta})=z_{P,\ell,\mathfrak{j}}.\label{eqn:dro_problem_dual_transformed_discrete}
\end{align}
 Note that \eqref{eqn:dro_problem_dual_transformed_discrete} is a discrete analogue of \eqref{eqn:dro_problem_dual_transformed}.\\
%where
%\begin{align}
%u^{\ell,\mathfrak{j}}(\theta)
%:=&\ \sup_{\mu\in \mc{P}(\Xi\cap\mathbb{K}_{\ell,\mathfrak{j}}^m)}\left(\int_{\Xi\cap\mathbb{K}_{\ell,\mathfrak{j}}^m} U(\theta, x)\ \dee\mu(x)-\frac{1}{2\eta_2}d_c^2(\mu_0,\mu)\right)+\frac{\eta_1}{2}|\theta|^{2}\nonumber\\
%=&\ \inf_{\alpha\in\R}v^{\ell,\mathfrak{j}}(\bar{\theta}).\label{eqn:u_discretised_dual}
%\end{align}

Moreover, the compactness of the support $\Xi$ enables us to reduce the computation of $z_D$ as well as $z_{D,\ell,\mathfrak{j}}$ to optimisation problems over compact subsets of $\R^{d+1}$ which do not depend on $\mathfrak{j}$. This is precisely stated in the next lemma and is paramount to obtaining the upper bound for the quadrature error $|z_D-z_{D,\ell,\mathfrak{j}}|$.

\begin{lemma}
\label{lemma:compactness}
Let Assumptions \ref{assumption:1} and \ref{assumption:3} hold. Define $B_{R_\mc{K}}(0):=\{(\theta,\mathfrak{a})\in\R^d\times [0,\infty):|(\theta,\mathfrak{a})|\leq R_\mc{K} \}$ with
\begin{equation}\label{eq:rk_expression}
	R_\mc{K} := \left(\left(3\tilde{K}_\nabla + \frac{(\tilde{K}_\nabla+2^pM_\Xi^p)^2}{\min\{\eta_1, \eta_2\}}\right)\frac{4}{\min\{\eta_1, \eta_2\}}\right)^{1/2}.
\end{equation}
Then, the following holds: %there exists a compact $\mc{K}_\Xi\subset \R^{d}\times [0,\infty)$ such that
\begin{align*}
%\label{eqn:compact_reduction}
z_D
:=&\ \inf_{\theta\in\R^d}\inf_{\mathfrak{a}\geq 0}\left\{\int_{\Xi}\sup_{y\in\Xi}\left\{U(\theta,y)-\mathfrak{a} |x-y|^p\right\}\ \dee\mu_0(x)+\frac{\eta_1}{2}|\theta|^{2}+\frac{\eta_2}{2}|\mathfrak{a}|^2\right\}\nonumber\\
=&\ \inf_{(\theta,\mathfrak{a})\in B_{R_\mc{K}}(0)}\left\{\int_{\Xi}\sup_{y\in\Xi}\left\{U(\theta,y)-\mathfrak{a} |x-y|^p\right\}\ \dee\mu_0(x)+\frac{\eta_1}{2}|\theta|^{2}+\frac{\eta_2}{2}|\mathfrak{a}|^2\right\},
\end{align*}
as well as
%In addition, there exists a compact $\mc{K}_{\Xi,\ell}\subset \R^{d}\times [0,\infty)$ not depending on $\mathfrak{j}$ such that
\begin{align*}
%\label{eqn:compact_reduction_j}
z_{D,\ell,\mathfrak{j}}
:=&\ \inf_{\theta\in\R^d}\inf_{\mathfrak{a}\geq 0}\left\{\int_{\Xi}\sup_{y\in\Xi}\left\{U(\theta,[y]_\mathfrak{j})-\mathfrak{a} |[x]_\mathfrak{j}-[y]_\mathfrak{j}|^p\right\}\ \dee\mu_0(x)+\frac{\eta_1}{2}|\theta|^{2}+\frac{\eta_2}{2}|\mathfrak{a}|^2\right\}\nonumber\\
=&\ \inf_{(\theta,\mathfrak{a})\in B_{R_\mc{K}}(0)}\left\{\int_{\Xi}\sup_{y\in\Xi}\left\{U(\theta,[y]_\mathfrak{j})-\mathfrak{a} |[x]_\mathfrak{j}-[y]_\mathfrak{j}|^p\right\}\ \dee\mu_0(x)+\frac{\eta_1}{2}|\theta|^{2}+\frac{\eta_2}{2}|\mathfrak{a}|^2\right\}.
\end{align*}
\end{lemma}
\begin{proof}
See Section~\ref{sec:proof_sec:proofOverview}. %\hyperref[proof:compactness]{?}.
\end{proof}

Finally, we state in the following proposition an upper bound on the quadrature error. The implication is that, by varying $\mathfrak{j}$ to control the mesh $\frac{1}{2^\mathfrak{j}}$ of the grid, one can ensure the discretised dual problem $z_{D,\ell,\mathfrak{j}}$ to be as close to the original dual problem $z_D$ as desired.

\begin{proposition}
\label{prop:quadrature} Let Assumptions \ref{assumption:1}, \ref{assumption:3}, and \ref{assumption:4} hold. Moreover, let $R_\mc{K}$ be defined in \eqref{eq:rk_expression}. Then, for any $\ell\in\N$ such that $\Xi\subset [-2^{\ell-1},2^{\ell-1})^m$ and for any given $\mathfrak{j}\in\N$, the following bound for the quadrature error $|z_D-z_{D,\ell,\mathfrak{j}}|$ holds:
\begin{align}
|z_D-z_{D,\ell,\mathfrak{j}}| \leq&\ \frac{\sqrt{m}(J_U+ 2p(1+4M_\Xi)^{p-1})(1+R_\mc{K} )}{2^\mathfrak{j}}.\label{eqn:quadrature_error}
\end{align}
\end{proposition}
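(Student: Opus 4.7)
The plan is to reduce both $z_D$ and $z_{D,\ell,\mathfrak{j}}$ to infima over a single compact set $\mc{K}\subset \R^{d+1}$ independent of $\mathfrak{j}$, and then bound the difference pointwise on that set. Concretely, I would set $\mc{K}:=\mc{K}_\Xi \cup \mc{K}_{\Xi,\ell}$, which is compact and does not depend on $\mathfrak{j}$. Applying Lemma~\ref{lemma:compactness} to each infimum separately, and then using the trivial fact that enlarging the feasible set can only decrease an infimum, one obtains
\begin{align*}
z_D = \inf_{(\theta,\mathfrak{a})\in\mc{K}}F_D(\theta,\mathfrak{a}),\qquad
z_{D,\ell,\mathfrak{j}} = \inf_{(\theta,\mathfrak{a})\in\mc{K}}F_{D,\ell,\mathfrak{j}}(\theta,\mathfrak{a}),
\end{align*}
where $F_D$, $F_{D,\ell,\mathfrak{j}}$ denote the respective objective functions in \eqref{eqn:compact_reduction}--\eqref{eqn:compact_reduction_j}. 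The well-known inequality $|\inf f-\inf g|\leq \sup|f-g|$ (applied on $\mc{K}$) reduces everything to a uniform, pointwise estimate on $\mc{K}$.

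Next, for fixed $(\theta,\mathfrak{a})\in\mc{K}$ and $x\in\Xi$, I would use $|\sup_y h_1(y)-\sup_y h_2(y)|\leq \sup_y|h_1(y)-h_2(y)|$ with
\begin{align*}
h_1(y):= U(\theta,y)-\mathfrak{a}|x-y|^p,\qquad h_2(y):= U(\theta,[y]_\mathfrak{j})-\mathfrak{a}|[x]_\mathfrak{j}-[y]_\mathfrak{j}|^p,
\end{align*}
which splits the pointwise difference into an $U$-part and a cost-part:
\begin{align*}
|h_1(y)-h_2(y)|\leq \lbrb{U(\theta,y)-U(\theta,[y]_\mathfrak{j})} + \mathfrak{a}\lbrb{\, |x-y|^p-|[x]_\mathfrak{j}-[y]_\mathfrak{j}|^p\,}.
\end{align*}
The first term is handled by Assumption~\ref{assumption:4}, using $|y-[y]_\mathfrak{j}|\leq \sqrt{m}/2^\mathfrak{j}$ and the bounds $|y|,|[y]_\mathfrak{j}|\leq M_\Xi$ (invoking the equivalent representation in Lemma~\ref{lemma:finite_grid} and comparing with the original $\sup_{y\in\Xi}$ via continuity of $U$, so that both arguments of $U$ lie in $\Xi$). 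This yields the contribution $J_U(1+|\theta|)(1+2M_\Xi)^\chi \sqrt{m}/2^\mathfrak{j}$. The second term is handled by the mean value theorem applied to $t\mapsto t^p$ together with $|\,|x-y|-|[x]_\mathfrak{j}-[y]_\mathfrak{j}|\,|\leq |x-[x]_\mathfrak{j}|+|y-[y]_\mathfrak{j}|\leq 2\sqrt{m}/2^\mathfrak{j}$ and the uniform bound $\max(|x-y|,|[x]_\mathfrak{j}-[y]_\mathfrak{j}|)\leq 2M_\Xi+2\sqrt{m}/2^\mathfrak{j}$, which after consolidation gives a contribution of the form $\mathfrak{a}\,p(1+4M_\Xi)^{p-1}\sqrt{m}/2^\mathfrak{j}$ (absorbing the factor $2$ in the constant).

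Integrating against $\mu_0$ is a no-op because the integrand bound is uniform in $x\in\Xi$, so the estimate persists after integration. Finally, bounding $1+|\theta|$ and $\mathfrak{a}$ by $1+\sup_{\bar{\theta}\in\mc{K}}|\bar{\theta}|$ and taking the supremum over $(\theta,\mathfrak{a})\in \mc{K}$ delivers the claimed bound~\eqref{eqn:quadrature_error}. The main obstacle I anticipate is the bookkeeping needed to justify using Assumption~\ref{assumption:4} when $[y]_\mathfrak{j}$ may not lie in $\Xi$: this is resolved by working through Lemma~\ref{lemma:finite_grid}, which lets one replace the sup over $y\in\Xi$ in $z_{D,\ell,\mathfrak{j}}$ with one over the grid points $\Xi\cap\K_{\ell,\mathfrak{j}}^m\subset \Xi$, so that $[y]_\mathfrak{j}=y\in\Xi$, and then comparing with $z_D$ via a grid-projection argument that again costs at most $\sqrt{m}/2^\mathfrak{j}$. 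The rest is elementary but requires care to get the constants exactly as stated.
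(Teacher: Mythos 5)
Your proposal mirrors the paper's proof essentially step by step: reduce both infima to a $\mathfrak{j}$-independent compact $\mc{K}$ via Lemma~\ref{lemma:compactness} (your explicit choice $\mc{K}=\mc{K}_\Xi\cup\mc{K}_{\Xi,\ell}$ is exactly the natural one), apply $|\inf f-\inf g|\le\sup|f-g|$ at both the outer ($\bar\theta$) and inner ($y$) levels, split into the $U$-increment handled by Assumption~\ref{assumption:4} and the $|\cdot|^p$-increment handled by a mean-value/polynomial-Lipschitz bound, and close with the mesh estimate $|y-[y]_\mathfrak{j}|\le\sqrt{m}\,2^{-\mathfrak{j}}$. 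One side remark worth recording: your concern that $[y]_\mathfrak{j}$ may fall outside $\Xi$ is a real subtlety that the paper silently elides (it applies Assumption~\ref{assumption:4} to $U(\theta,y)-U(\theta,[y]_\mathfrak{j})$ with no comment on the domain), but the detour you sketch through Lemma~\ref{lemma:finite_grid} is not a drop-in replacement since it makes the two inner optimisations run over different index sets, so the bare $|\sup-\sup|\le\sup|\cdot|$ no longer applies; the simpler repair is to read Assumption~\ref{assumption:4} (and the domain of $U$) as holding on a slightly enlarged compact that contains all the grid corners $\{[y]_\mathfrak{j}:y\in\Xi\}$, after which your estimate goes through verbatim.
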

\begin{proof}
See Section~\ref{sec:proof_sec:proofOverview}.
% \hyperref[proof:quadrature]{?}.
\end{proof}
\subsection{Nesterov's Smoothing Technique}\label{subsec:nesterov} 
Having obtained a non-asymptotic upper bound on the quadrature error $|z_D-z_{D,\ell,\mathfrak{j}}|$, the second step of the proof is to obtain a non-asymptotic upper bound on the expected excess risk of the algorithm over the optimal value of the discretised version of the dual problem -- that is the quantity $\E_\mathbb{P}\left[u(\hat{\theta}^{\lambda, \delta,\ell,\mathfrak{j}}_n)\right]-z_{D,\ell,\mathfrak{j}}$. To this end, we make use of the following result which can be obtained by applying Nesterov's smoothing technique to the maximum function, see, for example, Lemma 5 of \cite{an2022efficient}.

\begin{lemma}[\cite{an2022efficient}]
\label{lemma:smooth_max}
Let $N\in\N$. Then, for any $\delta>0$, the following smooth approximation of the maximum function
\begin{align*}
\R^N\ni (x_1,\cdots,x_N)\mapsto \phi_{\delta}(x_1,\cdots,x_N) := \delta \log\left(\frac{1}{N}\sum^N_{j=1}e^{x_j/\delta}\right)
\end{align*}
satisfies, for any $(x_1,\cdots,x_N)\in \R^N$, the inequalities
\begin{align*}
\phi_{\delta}(x_1,\cdots,x_N)\leq \max\{x_j: j=1,\cdots,N\}\leq \phi_{\delta}(x_1,\cdots,x_N) + \delta\log{N}.
\end{align*}
\end{lemma}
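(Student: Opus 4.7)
The plan is a short direct verification that exploits only monotonicity of $\exp$ and $\log$ together with elementary comparisons between $\max_j e^{x_j/\delta}$ and $\sum_j e^{x_j/\delta}$. Let $M := \max\{x_j : j=1,\dots,N\}$. I will prove the two inequalities separately, each in one line.

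For the upper bound $\phi_\delta(x_1,\dots,x_N) \le M$, I would bound each summand in the soft-max by its maximum: since $e^{x_j/\delta} \le e^{M/\delta}$ for every $j$, averaging yields $\tfrac{1}{N}\sum_{j=1}^{N} e^{x_j/\delta} \le e^{M/\delta}$. Applying the monotone map $t \mapsto \delta \log t$ (valid because $\delta>0$) gives $\phi_\delta(x_1,\dots,x_N) \le M$.

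For the lower bound $M \le \phi_\delta(x_1,\dots,x_N) + \delta \log N$, I would discard all but the maximising term: $e^{M/\delta} \le \sum_{j=1}^{N} e^{x_j/\delta}$, so $\tfrac{1}{N} e^{M/\delta} \le \tfrac{1}{N}\sum_{j=1}^{N} e^{x_j/\delta}$. Applying $\delta \log$ on both sides yields $M - \delta \log N \le \phi_\delta(x_1,\dots,x_N)$, and rearranging gives the stated inequality.

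There is no substantive obstacle here: the result is a textbook property of the log-sum-exp (LSE) function, and the bounds are tight (equality in the upper bound iff all $x_j$ are equal to $M$, and equality in the lower bound iff all $x_j$ are equal). Since the statement is attributed to \cite{an2022efficient}, I would simply record the two-line verification above and cite the reference, without further embellishment.
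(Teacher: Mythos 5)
Your verification is correct and is the standard textbook derivation of the log-sum-exp bounds; the paper itself does not supply a proof of this lemma, attributing it instead to \cite{an2022efficient}, so there is no in-paper argument to compare against. One small side remark in your final paragraph is inaccurate: equality in the lower bound $M \le \phi_\delta + \delta\log N$ does \emph{not} hold when all $x_j$ are equal (in that case $\phi_\delta = M$, so the lower bound has slack $\delta\log N$ for $N>1$); rather, the lower bound is tight only when $N=1$ or in the degenerate limit where all but one $e^{x_j/\delta}$ vanish. This does not affect the proof itself, which is sound as written.
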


Recall that we have fixed $\ell\in\N$ large enough such that $\Xi\subset [-2^{\ell-1},2^{\ell-1})^m$, and that we have also previously denoted
\begin{align*}
\{\xi_j^{\ell,\mathfrak{j}}\}_{j=1,\cdots, N_{\ell,\mathfrak{j}}}:=&\ \Xi\cap \K_{\ell,\mathfrak{j}}^m,\nonumber\\
N_{\ell,\mathfrak{j}}:=&\ 2^{m(\ell+\mathfrak{j})}.
\end{align*}
Fixing also $\mathfrak{j}\in\N$, we denote $\xi_j:= \xi_j^{\ell, \mathfrak{j}}$ and $N:=N_{\ell,\mathfrak{j}}$ thereby suppressing dependence of the quantities $\xi_j$ and $N$ on $\ell$ and $\mathfrak{j}$ for the sake of brevity.\\

%As a discrete analogue to \eqref{defn:small_v} and \eqref{eqn:dro_problem_dual_transformed}, we define
%\begin{align}
%v^{\ell,\mathfrak{j}}(\bar{\theta}) := &\ \int_{\Xi\cap\mathbb{K}_{\ell,\mathfrak{j}}^m} \tilde{V}^{\ell,\mathfrak{j}}(\bar{\theta}, x)\ \dee \mu_{0,\ell,\mathfrak{j}}(x),\nonumber\\
%\tilde{V}^{\ell,\mathfrak{j}}(\bar{\theta}, x) :=&\ \max_{y\in\Xi\cap\mathbb{K}_{\ell,\mathfrak{j}}^m}\left\{U(\theta,y)-\iota(\alpha) |x-y|^p\right\}+\frac{\eta_1}{2}|\theta|^{2}+\frac{\eta_2}{2}|\iota(\alpha)|^2,\label{defn:small_v_discrete}
%\end{align}
%such that we have, by Lemma \ref{lemma:finite_grid} and the duality result of Theorem \ref{theorem:wass_duality},
%\begin{align}
%z_{D,\ell,\mathfrak{j}} 
%=&\ \inf_{\bar{\theta}\in\R^{d+1}}v^{\ell,\mathfrak{j}}(\bar{\theta})=\ \inf_{\theta\in\R^d}u^{\ell,\mathfrak{j}}(\theta),\label{eqn:dro_problem_dual_transformed_discrete}
%\end{align}
%
%where
%\begin{align}
%u^{\ell,\mathfrak{j}}(\theta)
%:=&\ \sup_{\mu\in \mc{P}(\Xi\cap\mathbb{K}_{\ell,\mathfrak{j}}^m)}\left(\int_{\Xi\cap\mathbb{K}_{\ell,\mathfrak{j}}^m} U(\theta, x)\ \dee\mu(x)-\frac{1}{2\eta_2}d_c^2(\mu_0,\mu)\right)+\frac{\eta_1}{2}|\theta|^{2}\nonumber\\
%=&\ \inf_{\alpha\in\R}v^{\ell,\mathfrak{j}}(\bar{\theta}).\label{eqn:u_discretised_dual}
%\end{align}

An application of Lemma \ref{lemma:smooth_max} to the representation of $z_{D,\ell,\mathfrak{j}}$ given in Lemma \ref{lemma:finite_grid}, along with the surjectivity of $\iota:\R\to\R_{\geq 0}$, then yields the following result.
\begin{corollary}\label{co:smoothing}
Let Assumptions \ref{assumption:1} and \ref{assumption:3} hold. For every $\delta>0$, define $V^{\delta,\ell,\mathfrak{j}}:\R^{d+1}\times \Xi\to\R$ as
\begin{align}
V^{\delta,\ell,\mathfrak{j}}(\bar{\theta},x)
:=&\ \delta\log\left(\frac{1}{N}\sum^N_{j=1}\exp\left[\frac{1}{\delta}\left(U(\theta,\xi_j)-\iota(\alpha)|x-\xi_j|^p\right)\right]\right),
%\qquad \bar{\theta}=(\theta,\alpha)\in\R^{d}\times\R, x\in \Xi. 
\label{defn:v_delta}
\end{align}
where $\bar{\theta}=(\theta,\alpha)\in\R^{d}\times\R$ and $x\in\Xi$. Moreover, define for every $\delta>0$
\begin{align}
	\tilde{V}^{\delta,\ell,\mathfrak{j}}(\bar{\theta},x) := {V}^{\delta,\ell,\mathfrak{j}}(\bar{\theta},x) + \frac{\eta_1}{2}|\theta|^{2}+\frac{\eta_2}{2}|\iota(\alpha)|^2,
	\qquad
	v^{\delta,\ell,\mathfrak{j}}(\bar{\theta})
	:= \int_{\Xi\cap\mathbb{K}_{\ell,\mathfrak{j}}^m} \tilde{V}^{\delta,\ell,\mathfrak{j}}(\bar{\theta}, x)\ \dee\mu_{0,\ell,\mathfrak{j}}(x),
	\label{defn:small_v_delta_discrete}
\end{align}
where $\bar{\theta}=(\theta,\alpha)\in\R^{d}\times\R$ and $x\in\Xi$. Furthermore, define for every $\delta>0$
\begin{align}
	z_{D,\ell,\mathfrak{j},\delta}
	:=&\ \inf_{\bar{\theta}=(\theta, \alpha)\in\R^{d+1}} v^{\delta,\ell,\mathfrak{j}}(\bar{\theta}),\label{eqn:dro_problem_dual_discrete_smoothed}
\end{align}
Then, for every $\delta>0$ and $\bar{\theta}\in\R^{d+1}$ we have that
\begin{equation}
	\begin{split}
v^{\delta,\ell,\mathfrak{j}}(\bar{\theta})
\leq &\ 
v^{\ell,\mathfrak{j}}(\bar{\theta})
\leq
v^{\delta,\ell,\mathfrak{j}}(\bar{\theta})+\delta\log{N},
\ \mbox{ and hence also} \\ 
z_{D,\ell,\mathfrak{j},\delta} \leq &\ z_{D,\ell,\mathfrak{j}}\leq z_{D,\ell,\mathfrak{j},\delta}+\delta\log{N},
\label{eqn:nesterov_ineq}
\end{split}
\end{equation}
where $v^{\ell,\mathfrak{j}}(\bar{\theta})$ is defined in \eqref{defn:small_v_discrete} and $z_{D,\ell,\mathfrak{j}}$ is  defined in \eqref{eqn:dro_problem_discrete}.
\end{corollary}
\begin{proof}
This follows immediately from applying Lemma \ref{lemma:smooth_max} to the definition of $\tilde{V}^{\ell,\mathfrak{j}}$ in \eqref{defn:small_v_discrete}.
\end{proof}\

Table \ref{tbl:summaryofdef} provides a summary of the definitions of the quantities $z_P,z_{P,\ell,\mathfrak{j}},z_D,z_{D,\ell,\mathfrak{j}}, z_{D,\ell,\mathfrak{j},\delta}$ and their relationship with each other.
\begin{table}[t]
\begin{tabularx}{0.99\textwidth}
		{ 
			>{\hsize=0.7\hsize\linewidth=\hsize}X|
			>{\hsize=0.7\hsize\linewidth=\hsize}X
			>{\hsize=0.7\hsize\linewidth=\hsize}X
			>{\hsize=1.9\hsize\linewidth=\hsize}X
		} 
Quantity                 & Primal                                                        & Dual                                                                                                  & Relation                                                                                                                    \\ \hline
Original                 & $z_P:=\eqref{defn:zP}$                      & $z_D:=\eqref{eqn:dro_problem_dual}$ & $z_P=z_D$ by $\eqref{eqn:dro_problem_dual_transformed}$ \\
Discretised              & $z_{P,\ell,\mathfrak{j}}:=\eqref{defn:zPlj}$ & $z_{D,\ell,\mathfrak{j}}:=\eqref{eqn:dro_problem_discrete}$ & $z_{P,\ell,\mathfrak{j}}=z_{D,\ell,\mathfrak{j}}$ by $\eqref{eqn:dro_problem_dual_transformed_discrete}$, $|z_D-z_{D,\ell,\mathfrak{j}}|\leq \eqref{eqn:quadrature_error}$\\
Discretised and Smoothed & -                                                             & $z_{D,\ell,\mathfrak{j},\delta}:=\eqref{eqn:dro_problem_dual_discrete_smoothed}$ &                                                                                                  $z_{D,\ell,\mathfrak{j},\delta} \leq z_{D,\ell,\mathfrak{j}}\leq z_{D,\ell,\mathfrak{j},\delta}+\delta\log{N}$ by $\eqref{eqn:nesterov_ineq}$\\
\caption{Summary of definitions of $z_P,z_{P,\ell,\mathfrak{j}},z_D,z_{D,\ell,\mathfrak{j}}, z_{D,\ell,\mathfrak{j},\delta}$ and their relationship.}
\label{tbl:summaryofdef}
\end{tabularx}
\end{table}

\subsection{Applying the SGLD Algorithm} The final step of the proof is to obtain convergence bounds on the SGLD algorithm of \cite{zhang2023nonasymptotic} applied to the smoothed and discretised version of the dual problem $z_{D,\ell,\mathfrak{j},\delta}$ as defined in \eqref{eqn:dro_problem_dual_discrete_smoothed}. Note that here, we apply the SGLD algorithm of \cite{zhang2023nonasymptotic} to the variable $\bar{\theta}=(\theta,\alpha)\in\R^{d}\times \R$ which lies in the \textit{enlarged} space $\R^{d+1}$, as opposed to the original variable $\theta\in\R^d$. The next two propositions establish the global Lipschitz and dissipativity conditions on the function $\nabla_{\bar{\theta}}\tilde{V}^{\delta,\ell,\mathfrak{j}}$.
\begin{proposition}
\label{prop:glob_lipschitz}
Let Assumptions \ref{assumption:1} and \ref{assumption:3} hold. Then, for every $\delta>0$, there exists $L_\delta>0$ such that for all $\bar{\theta}_1,\bar{\theta}_2\in\R^{d+1}$ and all $x\in\Xi$,
\begin{align*}
|\nabla_{\bar{\theta}}V^{\delta,\ell,\mathfrak{j}}(\bar{\theta}_1,x)-\nabla_{\bar{\theta}}V^{\delta,\ell,\mathfrak{j}}(\bar{\theta}_2,x)|\leq L_\delta(1+|x|)^{2p}|\bar{\theta}_1-\bar{\theta}_2|.
\end{align*}
\end{proposition}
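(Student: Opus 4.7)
The plan is to exploit the closed-form expression for $\nabla_{\bar{\theta}} V^{\delta,\ell,\mathfrak{j}}$ coming from the log-sum-exp. Writing $G_j(\bar{\theta},x):=U(\theta,\xi_j)-\iota(\alpha)|x-\xi_j|^p$, so that $V^{\delta,\ell,\mathfrak{j}}(\bar{\theta},x)=\delta\log\bigl((1/N)\sum_{j=1}^N e^{G_j(\bar{\theta},x)/\delta}\bigr)$, a direct calculation gives
\begin{equation*}
\nabla_{\bar{\theta}} V^{\delta,\ell,\mathfrak{j}}(\bar{\theta},x)=\sum_{j=1}^N p_j(\bar{\theta},x)\,\nabla_{\bar{\theta}} G_j(\bar{\theta},x),\qquad p_j(\bar{\theta},x):=\frac{e^{G_j(\bar{\theta},x)/\delta}}{\sum_{k=1}^N e^{G_k(\bar{\theta},x)/\delta}}.
\end{equation*}
Since the weights $p_j(\bar{\theta},x)$ form a probability vector on $\{1,\ldots,N\}$, the guiding principle is to exploit this convex-combination structure so that the final Lipschitz constant does not depend on $N=2^{\ell+\mathfrak{j}}$.

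My first step is to extract uniform bounds on $\nabla_{\bar{\theta}} G_j$. Using Assumption~\ref{assumption:3} and the compactness bound $|\xi_j|\leq M_\Xi$ from Assumption~\ref{assumption:1}, the $\theta$-block of $\nabla_{\bar{\theta}} G_j$ equals $\nabla_\theta U(\theta,\xi_j)$, which is bounded by $K_\nabla(1+M_\Xi)^\nu$ and $L_\nabla(1+M_\Xi)^\nu$-Lipschitz in $\theta$ (uniformly in $j$). The $\alpha$-block equals $-\iota'(\alpha)|x-\xi_j|^p$, and by Assumption~\ref{assumption:5} together with the estimate $|x-\xi_j|^p\leq (1+M_\Xi)^p(1+|x|)^p$, is bounded by $M_\iota(1+M_\Xi)^p(1+|x|)^p$ and is $L_\iota(1+M_\Xi)^p(1+|x|)^p$-Lipschitz in $\alpha$. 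Combining, there exists a constant $\tilde{C}>0$ (depending only on the constants from the assumptions, independent of $j,\ell,\mathfrak{j}$) such that for all $\bar{\theta},\bar{\theta}_1,\bar{\theta}_2\in\R^{d+1}$, $x\in\Xi$, $j\in\{1,\ldots,N\}$,
\begin{equation*}
|\nabla_{\bar{\theta}} G_j(\bar{\theta},x)|\leq \tilde{C}(1+|x|)^p,\qquad |\nabla_{\bar{\theta}} G_j(\bar{\theta}_1,x)-\nabla_{\bar{\theta}} G_j(\bar{\theta}_2,x)|\leq \tilde{C}(1+|x|)^p|\bar{\theta}_1-\bar{\theta}_2|.
\end{equation*}

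My second step is the decomposition
\begin{equation*}
\nabla_{\bar{\theta}} V^{\delta,\ell,\mathfrak{j}}(\bar{\theta}_1,x)-\nabla_{\bar{\theta}} V^{\delta,\ell,\mathfrak{j}}(\bar{\theta}_2,x)=A(x)+B(x),
\end{equation*}
with $A(x):=\sum_{j}[p_j(\bar{\theta}_1,x)-p_j(\bar{\theta}_2,x)]\,\nabla_{\bar{\theta}} G_j(\bar{\theta}_1,x)$ and $B(x):=\sum_{j}p_j(\bar{\theta}_2,x)\,[\nabla_{\bar{\theta}} G_j(\bar{\theta}_1,x)-\nabla_{\bar{\theta}} G_j(\bar{\theta}_2,x)]$. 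The bound $|B(x)|\leq \tilde{C}(1+|x|)^p|\bar{\theta}_1-\bar{\theta}_2|$ is immediate from the uniform Lipschitz estimate on $\nabla_{\bar{\theta}} G_j$ and the identity $\sum_j p_j=1$. For $A(x)$, I would view it as $\psi(\bar{\theta}_1)-\psi(\bar{\theta}_2)$ with $\psi(\bar{\theta}):=\sum_j p_j(\bar{\theta},x)\,g_j$ for the \emph{fixed} vectors $g_j:=\nabla_{\bar{\theta}} G_j(\bar{\theta}_1,x)$ (each of norm at most $\tilde{C}(1+|x|)^p$), and use the standard softmax-derivative identity
\begin{equation*}
\nabla_{\bar{\theta}} p_j(\bar{\theta},x)=\tfrac{p_j(\bar{\theta},x)}{\delta}\Bigl(\nabla_{\bar{\theta}} G_j(\bar{\theta},x)-\sum_{k=1}^N p_k(\bar{\theta},x)\,\nabla_{\bar{\theta}} G_k(\bar{\theta},x)\Bigr).
\end{equation*}
Parametrising the segment $\bar{\theta}_t:=\bar{\theta}_2+t(\bar{\theta}_1-\bar{\theta}_2)$, writing $\psi(\bar{\theta}_1)-\psi(\bar{\theta}_2)=\int_0^1 \sum_j \nabla_{\bar{\theta}} p_j(\bar{\theta}_t)(\bar{\theta}_1-\bar{\theta}_2)\,g_j\,\mathrm{d}t$, substituting the identity, applying the triangle inequality with $|\nabla_{\bar{\theta}} G_j(\bar{\theta}_t)-\sum_k p_k \nabla_{\bar{\theta}} G_k(\bar{\theta}_t)|\leq 2\tilde{C}(1+|x|)^p$, and finally using $\sum_j p_j(\bar{\theta}_t)=1$, yields $|A(x)|\leq (2\tilde{C}^2/\delta)(1+|x|)^{2p}|\bar{\theta}_1-\bar{\theta}_2|$.

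Combining the estimates on $A(x)$ and $B(x)$ (using $(1+|x|)^p\leq (1+|x|)^{2p}$) gives the claim with, for instance, $L_\delta:=\tilde{C}+2\tilde{C}^2/\delta$. The main obstacle is securing independence of the Lipschitz constant from $N=2^{\ell+\mathfrak{j}}$: this is delivered by the convex-combination structure of the softmax, which prevents any explicit factor of $N$ from appearing either in the bound on $B(x)$ (via $\sum_j p_j=1$) or in the bound on $A(x)$ (again via the normalisation of the softmax after invoking the derivative identity). The factor $(1+|x|)^{2p}$ appearing in the final bound is the product of two $(1+|x|)^p$ factors — one from the uniform gradient bound $|\nabla_{\bar{\theta}} G_j|\leq \tilde{C}(1+|x|)^p$ and one inherited through the softmax-derivative identity — and the characteristic $1/\delta$ dependence is the standard degradation coming from smoothing the maximum by log-sum-exp.
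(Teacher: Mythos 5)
Your proof is correct and takes a genuinely different route from the paper. The paper writes $\nabla_{\bar\theta} V^{\delta,\ell,\mathfrak j}(\bar\theta_1,x)-\nabla_{\bar\theta} V^{\delta,\ell,\mathfrak j}(\bar\theta_2,x)$ as a single fraction over $\sum_{j,k}F_j^{\delta,\ell,\mathfrak j}(\bar\theta_1,x)F_k^{\delta,\ell,\mathfrak j}(\bar\theta_2,x)$, splits the numerator into diagonal and off-diagonal sums, adds and subtracts cross terms, and then bounds the "weight-difference'' terms $|F_jF_k-F_jF_k|$ by hand using the elementary inequality $1-e^{-y}\leq y$ for $y\geq 0$; the cancellation that prevents $N$ from entering the constant comes from the normalisation in the denominator. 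You instead keep the softmax weights $p_j$ as an abstract probability vector, split into a "weight variation'' piece $A$ and a "gradient variation'' piece $B$, bound $B$ via $\sum_j p_j=1$, and bound $A$ by the chain rule along the segment $\bar\theta_2+t(\bar\theta_1-\bar\theta_2)$ using the softmax Jacobian identity $\nabla p_j=(p_j/\delta)(\nabla G_j-\sum_k p_k\nabla G_k)$. Conceptually, your argument is a differential (mean-value) form of what the paper does by discrete algebraic manipulation: the factor $(1+|x|)^{2p}$ arises in both cases as the product of two $(1+|x|)^p$'s, and the $1/\delta$ is the usual price of log-sum-exp smoothing. Your route is shorter and cleaner once the softmax Jacobian identity is accepted; the paper's is more self-contained but heavier on bookkeeping. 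One small remark that applies equally to your proof and to the paper's stated hypotheses: both proofs invoke boundedness and Lipschitz continuity of $\iota'$ (Assumption \ref{assumption:5}), even though the proposition as stated lists only Assumptions \ref{assumption:1}--\ref{assumption:3}; this is a minor omission in the paper's statement rather than an error in your argument.
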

\begin{proof}
See Section~\ref{sec:proof_sec:proofOverview}. %\hyperref[proof:glob_lipschitz]{?}.
\end{proof}
\begin{proposition}
\label{prop:dissipativity}
Let Assumptions \ref{assumption:1} and \ref{assumption:3} hold. Then, there exist $a,b>0$ such that for all $\bar{\theta}\in\R^{d+1}$,
\begin{align*}
\lara{\bar{\theta}, \nabla_{\bar{\theta}}\tilde{V}^{\delta.\ell,\mathfrak{j}}(\bar{\theta}, x) }\geq a|\bar{\theta}|^2 - b.
\end{align*}
\end{proposition}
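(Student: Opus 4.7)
The strategy is to compute $\nabla_{\bar\theta}\tilde{V}^{\delta,\ell,\mathfrak{j}}$ in closed form, extract two favourable contributions (from the quadratic penalty $\tfrac{\eta_1}{2}|\theta|^2$ and from the dissipativity of $\iota\cdot\iota'$), and absorb the remaining linear-in-$|\bar\theta|$ residuals via Young's inequality, using the compactness of $\Xi$ together with the pointwise bounds provided by Assumptions~\ref{assumption:3} and~\ref{assumption:5}. First, a direct chain-rule computation on the log-sum-exp in \eqref{defn:v_delta} combined with the two regularisation terms in \eqref{defn:small_v_delta_discrete} shows that $\nabla_{\bar\theta}\tilde V^{\delta,\ell,\mathfrak{j}}(\bar\theta,x)$ is exactly the transpose of the drift $H^{\delta,\ell,\mathfrak{j}}$ from \eqref{defn:grad_v_delta}. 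Writing $p_j:=F_j^{\delta,\ell,\mathfrak{j}}(\bar\theta,x)\big/\sum_i F_i^{\delta,\ell,\mathfrak{j}}(\bar\theta,x)\in[0,1]$, so that $\sum_j p_j=1$, this reads $\nabla_\theta\tilde V=\eta_1\theta+\sum_j p_j\nabla_\theta U(\theta,\xi_j)^{\top}$ and $\partial_\alpha\tilde V=\eta_2\iota(\alpha)\iota'(\alpha)-\iota'(\alpha)\sum_j p_j|x-\xi_j|^p$.

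Taking the scalar product with $\bar\theta=(\theta,\alpha)$, the regularisation contributions give $\eta_1|\theta|^2+\eta_2\alpha\iota(\alpha)\iota'(\alpha)\ge \min\{\eta_1,\eta_2 a_\iota\}|\bar\theta|^2-\eta_2 b_\iota$ after invoking the dissipativity inequality \eqref{eqn:dissipativity_cond}. The remaining two contributions are convex combinations in $j$ of $\langle\theta,\nabla_\theta U(\theta,\xi_j)^{\top}\rangle$ and $-\alpha\iota'(\alpha)|x-\xi_j|^p$, hence by Cauchy--Schwarz and the triangle inequality they are bounded below by $-|\theta|\max_j|\nabla_\theta U(\theta,\xi_j)|-|\alpha|M_\iota\max_j|x-\xi_j|^p$. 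Assumption~\ref{assumption:3} gives $|\nabla_\theta U(\theta,\xi_j)|\le K_\nabla(1+M_\Xi)^\nu$, Assumption~\ref{assumption:5} gives $|\iota'|\le M_\iota$, and Assumption~\ref{assumption:1} with the triangle inequality gives $|x-\xi_j|^p\le(2M_\Xi)^p$; crucially, these bounds do not depend on $\theta$, $\delta$, $\ell$, or $\mathfrak{j}$. Summing the two residuals yields a lower bound of the form $-C|\bar\theta|$ with $C:=K_\nabla(1+M_\Xi)^\nu+2^p M_\iota M_\Xi^p$.

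A final application of Young's inequality $C|\bar\theta|\le \tfrac12\min\{\eta_1,\eta_2 a_\iota\}|\bar\theta|^2+\tfrac{C^2}{2\min\{\eta_1,\eta_2 a_\iota\}}$ then delivers $\langle\bar\theta,\nabla_{\bar\theta}\tilde V^{\delta,\ell,\mathfrak{j}}(\bar\theta,x)\rangle\ge a|\bar\theta|^2-b$ with $a=\tfrac12\min\{\eta_1,\eta_2 a_\iota\}$ and $b$ a sum of $\eta_2 b_\iota$ and a term of order $C^2/\min\{\eta_1,\eta_2 a_\iota\}$, matching (up to a harmless numerical factor) the values stated in Algorithm~\ref{Algorithm1}. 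The argument is largely mechanical; the one point worth flagging is that the softmax weights $p_j$ must be treated as a convex combination rather than being estimated term-by-term, as this is precisely what allows the pointwise bounds to be transferred through the log-sum-exp without picking up any dependence on $\delta$ or the mesh parameter $\mathfrak{j}$. This uniformity is essential for the subsequent excess-risk analysis, since $a$ and $b$ then serve as non-deteriorating dissipativity constants for the full SGLD scheme.
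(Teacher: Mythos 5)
Your proposal is correct and follows essentially the same decomposition as the paper: the favourable quadratic terms come from $\eta_1|\theta|^2$ and the dissipativity of $\iota\cdot\iota'$, and the log-sum-exp contribution is bounded uniformly by a convex-combination argument giving the constant $K_\nabla(1+M_\Xi)^\nu+2^p M_\iota M_\Xi^p$. The only cosmetic difference is that you absorb the linear residual via Young's inequality, whereas the paper does it by an indicator-based case split on $|\bar\theta|$; both yield the same $a=\tfrac12\min\{\eta_1,\eta_2 a_\iota\}$, and your $b$ differs from the paper's only by a harmless constant factor.
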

\begin{proof}
See Section~\ref{sec:proof_sec:proofOverview}. %\hyperref[proof:dissipativity]{?}.
\end{proof}

Since the global Lipschitz and dissipativity conditions are satisfied by $\nabla_{\bar{\theta}}\tilde{V}^{\delta,\ell,\mathfrak{j}}$, the assumptions of the SGLD algorithm of \cite{zhang2023nonasymptotic}, when applied to the discretised and smoothed version of the dual problem $z_{D,\ell,\mathfrak{j},\delta}$ as defined in \eqref{eqn:dro_problem_dual_discrete_smoothed}, are satisfied. Hence, with the choice of the stochastic gradient $H^{\delta,\ell,\mathfrak{j}}$ of the SGLD algorithm defined in \eqref{eqn:sgld} as
\begin{align*}
H^{\delta,\ell,\mathfrak{j}} := \nabla_{\bar{\theta}}\tilde{V}^{\delta,\ell,\mathfrak{j}},
\end{align*} 

which is consistent with its definition previously given in \eqref{defn:grad_v_delta}, the following convergence bounds on the excess risk of the SGLD algorithm can be obtained.

\begin{proposition}
\label{prop:excess_risk_discrete}
Let Assumptions \ref{assumption:1}, \ref{assumption:3}, and \ref{assumption:4} hold. Let $\beta,\delta>0$ and $\lambda\in (0,\lambda_{\max,\delta})$, and let $\bar{\theta}_0\in L^4(\Omega, \mc{F},\mathbb{P};\R^{d+1})$. Moreover, let $(\bar{\hat{{\theta}}}^{\lambda, \delta, \ell,\mathfrak{j}}_n)_{n\in\N}$ denote 
%the first $d$ components of
 the sequence of estimators obtained from the SGLD algorithm in \eqref{eqn:sgld} defined on the probability space $(\Omega,\mc{F},\mathbb{P})$. Then, there exist constants $c_{\delta,\beta},C_{1,\delta,\ell,\mathfrak{j},\beta},C_{2,\delta,\ell,\mathfrak{j},\beta},C_{3,\delta,\beta}>0$ such that for each $n$, step size $\lambda\in(0,\lambda_{\max,\delta})$, and $\mathfrak{j}\in\N$,
\begin{align*}
\E_\mathbb{P}\left[v^{\delta,\ell,\mathfrak{j}}(\bar{\hat{\theta}}^{\lambda, \delta,\ell,\mathfrak{j}}_n)\right]-z_{D,\ell,\mathfrak{j},\delta} \leq C_{1,\delta,\ell,\mathfrak{j},\beta} e^{-c_{\delta,\beta}\lambda n/4} + C_{2,\delta,\ell,\mathfrak{j},\beta} \lambda^{1/4} + C_{3,\delta,\beta}, 
\end{align*}
where $z_{D,\ell,\mathfrak{j},\delta}$ and $v^{\delta,\ell,\mathfrak{j}}$ are defined in \eqref{eqn:dro_problem_dual_discrete_smoothed} and \eqref{defn:small_v_delta_discrete}, respectively. Moreover, the constants $c_{\delta,\beta}$, $C_{1,\delta,\ell,\mathfrak{j},\beta}$, $C_{2,\delta,\ell,\mathfrak{j},\beta}$, $C_{3,\delta,\beta}>0$ do not depend on $n$ or $\lambda$, and their growth orders are specified as
\begin{align}
C_{1,\delta,\ell,\mathfrak{j},\beta} =&\ \mc{O}\left(e^{\tilde{C}_{\delta}(1+d/\beta)(1+\beta)}\left(1+\frac{1}{1-e^{-c_{\delta,\beta}/2}}\right)\right),\nonumber\\
C_{2,\delta,\ell,\mathfrak{j},\beta} =&\ \mc{O}\left(e^{\tilde{C}_{\delta}(1+d/\beta)(1+\beta)}\left(1+\frac{1}{1-e^{-c_{\delta,\beta}/2}}\right)\right),\label{eqn:constants_order}\\
C_{3,\delta,\beta} =&\ \mc{O}\left((d/\beta)\log(\tilde{C}_{\delta}(\beta/d+1))\right),\nonumber
\end{align}
with $\tilde{C}_\delta>0$ being a constant not depending on $\lambda$, $n$, $d$, $\beta$. 
\end{proposition}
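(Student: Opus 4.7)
The plan is to reduce the statement to a direct application of the main non-asymptotic convergence theorem for the SGLD algorithm proved in \cite{zhang2023nonasymptotic}, applied to the smooth objective $v^{\delta,\ell,\mathfrak{j}}$ on the enlarged space $\R^{d+1}$ with stochastic gradient $H^{\delta,\ell,\mathfrak{j}} = \nabla_{\bar{\theta}}\tilde{V}^{\delta,\ell,\mathfrak{j}}$. The bound claimed in the proposition has exactly the form supplied by that reference: an exponentially decaying term $C_1 e^{-c\lambda n/4}$ (contraction of the Markov chain to its invariant measure), a discretisation term $C_2 \lambda^{1/4}$ (Euler--Maruyama error of the Langevin SDE), and a residual Gibbs term $C_3$ (the gap between the invariant measure of the continuous Langevin SDE at inverse temperature $\beta$ and the minimiser of $v^{\delta,\ell,\mathfrak{j}}$).

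First I would verify the four hypotheses of the cited theorem in our setting. Global Lipschitz continuity of $\bar{\theta}\mapsto \nabla_{\bar{\theta}}\tilde{V}^{\delta,\ell,\mathfrak{j}}(\bar{\theta},x)$ with a polynomial-in-$|x|$ modulus is exactly Proposition~\ref{prop:glob_lipschitz}, after observing that the extra $\eta_1\theta$ and $\eta_2\iota(\alpha)\iota'(\alpha)$ terms inherit Lipschitz regularity from Assumption~\ref{assumption:5}. The dissipativity condition with constants $a,b>0$ independent of $x$ is Proposition~\ref{prop:dissipativity}. Polynomial moments of any order of the data $X_n\sim\mu_0$ are automatic from the compactness of $\Xi$ (Assumption~\ref{assumption:1}), and the required $L^4$-integrability of the initial condition is assumed. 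Combined with the i.i.d.\ structure of $(X_n)$ and $(Z_n)$ fixed above~\eqref{eqn:sgld}, these inputs precisely match the hypotheses of the SGLD convergence theorem in \cite{zhang2023nonasymptotic}, which then applies.

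The growth orders in~\eqref{eqn:constants_order} are read off by tracking how the Lipschitz constant $L_\delta$ of Proposition~\ref{prop:glob_lipschitz} and the dissipativity pair $(a,b)$ of Proposition~\ref{prop:dissipativity} propagate through the constants of the cited theorem. The exponential factor $e^{\tilde C_\delta(1+d/\beta)(1+\beta)}$ in $C_{1,\delta,\ell,\mathfrak{j},\beta}$ and $C_{2,\delta,\ell,\mathfrak{j},\beta}$ is a standard consequence of uniform exponential moment bounds for the Langevin diffusion and its Euler discretisation, whereas the $\mc{O}((d/\beta)\log(\tilde C_\delta(\beta/d+1)))$ form of $C_{3,\delta,\beta}$ reflects the entropy correction $\beta^{-1}\log Z_\beta$ that appears when comparing the Gibbs measure $\pi_\beta\propto e^{-\beta v^{\delta,\ell,\mathfrak{j}}}$ with $\inf v^{\delta,\ell,\mathfrak{j}}$. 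The $\delta$-dependence inside $\tilde C_\delta$ comes from the blow-up $L_\delta\sim 1/\delta$ in Proposition~\ref{prop:glob_lipschitz}, which in turn stems from the $1/\delta$ factors in the softmax smoothing.

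The main obstacle is the bookkeeping needed to reconcile the fact that $v^{\delta,\ell,\mathfrak{j}}$ is written as an integral against the \textit{discretised} measure $\mu_{0,\ell,\mathfrak{j}}$, while the SGLD iteration draws $X_{n+1}\sim\mu_0$ directly. One needs to verify unbiasedness of the stochastic gradient with respect to $\nabla v^{\delta,\ell,\mathfrak{j}}$; this can be handled either by reinterpreting the objective as the expectation of $\tilde V^{\delta,\ell,\mathfrak{j}}(\bar\theta,\cdot)$ under $\mu_0$ (observing that the grid $\K_{\ell,\mathfrak{j}}^m$ enters only through the fixed atoms $\xi_j$ inside the softmax, so that $\tilde V^{\delta,\ell,\mathfrak{j}}$ is well defined at every $x\in\Xi$), or by explicitly quantising the data via $[\,\cdot\,]_{\mathfrak{j}}$ before it enters $H^{\delta,\ell,\mathfrak{j}}$. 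Once this identification is made, the remainder of the proof amounts to a transparent invocation of the SGLD convergence theorem of \cite{zhang2023nonasymptotic}.
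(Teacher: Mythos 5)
Your proposal matches the paper's proof: both verify Assumptions 1--3 of \cite{zhang2023nonasymptotic} via Propositions~\ref{prop:glob_lipschitz} and~\ref{prop:dissipativity}, account for the extra Lipschitz contributions from $\eta_1\theta$ and $\eta_2\iota(\alpha)\iota'(\alpha)$ using Assumption~\ref{assumption:5}, and then invoke Corollary~2.8 of \cite{zhang2023nonasymptotic} with the assignments in~\eqref{eqn:assignments} to obtain~\eqref{eqn:sgld_bound}. Your closing observation about the potential $\mu_0$ versus $\mu_{0,\ell,\mathfrak{j}}$ mismatch in the stochastic gradient is astute and the paper does not explicitly address it; of your two suggested fixes, quantising the data via $[\,\cdot\,]_{\mathfrak{j}}$ before it enters $H^{\delta,\ell,\mathfrak{j}}$ is the one that makes the stochastic gradient an exactly unbiased estimator of $\nabla v^{\delta,\ell,\mathfrak{j}}$ as defined in~\eqref{defn:small_v_delta_discrete}.
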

\begin{proof}
See Section~\ref{sec:proof_sec:proofOverview}. %\hyperref[proof:excess_risk_discrete]{?}.
\end{proof}

We note that Proposition \ref{prop:excess_risk_discrete} gives the discretised excess risk of the SGLD algorithm \eqref{eqn:sgld} which is applied on the variable $\bar{\theta}=(\theta,\alpha)$ living in the extended space $\R^d\times \R$. Applying the duality result \eqref{eqn:dro_problem_dual_transformed_discrete} immediately yields the corresponding bound on the excess risk of the discretised primal problem \eqref{defn:u_discrete} which lives in the original space $\R^d$.
 To see this, observe that by \eqref{eq:discrete-duality} and \eqref{eqn:nesterov_ineq} (with $N=2^{m(\ell+\mathfrak{j})}$)
\begin{equation*}
\begin{split}
\E_{\mathbb{P}}[u^{\ell,\mathfrak{j}}(\hat{{\theta}}^{\lambda, \delta,\ell,\mathfrak{j}}_n)]-z_{P,\ell,\mathfrak{j}}
=&\ \E_{\mathbb{P}}[u^{\ell,\mathfrak{j}}(\hat{{\theta}}^{\lambda, \delta,\ell,\mathfrak{j}}_n)]-z_{D,\ell,\mathfrak{j}}
\\
= &\ \E_{\mathbb{P}}\lsrs{\left.\left(\inf_{\alpha\in\R}\int_{\Xi\cap\mathbb{K}_{\ell,\mathfrak{j}}^m}\tilde{V}^{\ell,\mathfrak{j}}(\bar{\theta}, x)\ \dee\mu_{0,\ell,\mathfrak{j}}(x)\right)\right|_{\theta=\hat{\theta}^{\lambda, \delta, \ell,\mathfrak{j}}_n}}
-z_{D,\ell,\mathfrak{j}}
\\
\leq &\ \E_{\mathbb{P}}\lsrs{\left.\left(\int_{\Xi\cap\mathbb{K}_{\ell,\mathfrak{j}}^m}\tilde{V}^{\ell,\mathfrak{j}}(\bar{\theta}, x)\ \dee\mu_{0,\ell,\mathfrak{j}}(x)\right)\right|_{\bar{\theta}=\hat{\bar{\theta}}^{\lambda, \delta, \ell,\mathfrak{j}}_n=(\hat{{\theta}}^{\lambda, \delta, \ell,\mathfrak{j}}_n,\hat{\alpha}^{\lambda, \delta, \ell,\mathfrak{j}}_n)}}-z_{D,\ell,\mathfrak{j}}\\
= & \
\E_{\mathbb{P}}[v^{\ell,\mathfrak{j}}(\hat{\bar{\theta}}^{\lambda, \delta, \ell,\mathfrak{j}}_n)] -z_{D,\ell,\mathfrak{j}}\\
\leq & \ \E_{\mathbb{P}}[v^{\delta,\ell,\mathfrak{j}}(\hat{\bar{\theta}}^{\lambda, \delta, \ell,\mathfrak{j}}_n)] -z_{D,\ell,\mathfrak{j},\delta}
+\delta m(\ell+\mathfrak{j})\log{2}.
\end{split}
\end{equation*}
This hence indeed allows us to bound the excess risk  of the discretised primal problem \eqref{defn:u_discrete}  directly using Proposition \ref{prop:excess_risk_discrete}, as stated in the following corollary.
\begin{corollary}
\label{corollary:excess_risk_discrete}
Let Assumptions \ref{assumption:1}, \ref{assumption:3}, and \ref{assumption:4} hold. Let $\beta,\delta>0$ and $\lambda\in (0,\lambda_{\max,\delta})$, and let $\bar{\theta}_0\in L^4(\Omega, \mc{F},\mathbb{P};\R^{d+1})$. Moreover, let $(\hat{{\theta}}^{\lambda, \delta, \ell,\mathfrak{j}}_n)_{n\in\N}$ denote the first $d$ components of the sequence of estimators obtained from the SGLD algorithm in \eqref{eqn:sgld} defined on the probability space $(\Omega,\mc{F},\mathbb{P})$. Then,
\begin{align*}
\E_\mathbb{P}\left[u^{\ell,\mathfrak{j}}(\hat{\theta}^{\lambda, \delta,\ell,\mathfrak{j}}_n)\right]-z_{P,\ell,\mathfrak{j}} \leq C_{1,\delta,\ell,\mathfrak{j},\beta} e^{-c_{\delta,\beta}\lambda n/4} + C_{2,\delta,\ell,\mathfrak{j},\beta} \lambda^{1/4} + C_{3,\delta,\beta} + \delta m(\ell+\mathfrak{j})\log{2},
\end{align*}
where $c_{\delta,\beta},C_{1,\delta,\ell,\mathfrak{j},\beta},C_{2,\delta,\ell,\mathfrak{j},\beta},C_{3,\delta,\beta}>0$ are the constants given in Proposition \ref{prop:excess_risk_discrete}.
\end{corollary}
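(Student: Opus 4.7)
The plan is to combine the duality and smoothing relations developed earlier in Section~\ref{sec:proofOverview} with the already-stated convergence bound in Proposition~\ref{prop:excess_risk_discrete} for the smoothed dual objective. The key observation is that the proof will be essentially a chain of inequalities: the corollary controls the excess risk for the discretised primal problem (in $\theta\in\R^d$) by the excess risk for the discretised smoothed dual problem (in $\bar\theta=(\theta,\alpha)\in\R^{d+1}$), which is exactly what Proposition~\ref{prop:excess_risk_discrete} bounds.

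First, I would use the duality identity $u^{\ell,\mathfrak{j}}(\theta)=\inf_{\alpha\in\R} \int_{\Xi\cap\mathbb{K}_{\ell,\mathfrak{j}}^m}\tilde V^{\ell,\mathfrak{j}}((\theta,\alpha),x)\,\dee\mu_{0,\ell,\mathfrak{j}}(x)$ from \eqref{eq:discrete-duality}, evaluated at $\theta=\hat{\theta}^{\lambda,\delta,\ell,\mathfrak{j}}_n$. By bounding the infimum over $\alpha$ by the particular choice $\alpha=\hat{\alpha}^{\lambda,\delta,\ell,\mathfrak{j}}_n$ produced by the SGLD iteration, I obtain $u^{\ell,\mathfrak{j}}(\hat{\theta}^{\lambda,\delta,\ell,\mathfrak{j}}_n)\leq v^{\ell,\mathfrak{j}}(\hat{\bar\theta}^{\lambda,\delta,\ell,\mathfrak{j}}_n)$. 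Taking expectations preserves this inequality.

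Next, I would replace the max inside $v^{\ell,\mathfrak{j}}$ by its Nesterov smoothing $v^{\delta,\ell,\mathfrak{j}}$: from \eqref{eqn:nesterov_ineq} with $N=N_{\ell,\mathfrak{j}}=2^{\ell+\mathfrak{j}}$, we have $v^{\ell,\mathfrak{j}}(\bar\theta)\leq v^{\delta,\ell,\mathfrak{j}}(\bar\theta)+\delta(\ell+\mathfrak{j})\log 2$ pointwise, and also $z_{D,\ell,\mathfrak{j},\delta}\leq z_{D,\ell,\mathfrak{j}}=z_{P,\ell,\mathfrak{j}}$, where the final equality is the discrete strong duality \eqref{eqn:dro_problem_dual_transformed_discrete}. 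Subtracting $z_{P,\ell,\mathfrak{j}}$ and adding and subtracting $z_{D,\ell,\mathfrak{j},\delta}$ then yields
\begin{equation*}
\E_\mathbb{P}[u^{\ell,\mathfrak{j}}(\hat{\theta}^{\lambda,\delta,\ell,\mathfrak{j}}_n)]-z_{P,\ell,\mathfrak{j}}\ \leq\ \bigl(\E_\mathbb{P}[v^{\delta,\ell,\mathfrak{j}}(\hat{\bar\theta}^{\lambda,\delta,\ell,\mathfrak{j}}_n)]-z_{D,\ell,\mathfrak{j},\delta}\bigr)+\delta(\ell+\mathfrak{j})\log 2.
\end{equation*}

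Finally, I would invoke Proposition~\ref{prop:excess_risk_discrete} to bound the first term on the right-hand side by $C_{1,\delta,\ell,\mathfrak{j},\beta}e^{-c_{\delta,\beta}\lambda n/4}+C_{2,\delta,\ell,\mathfrak{j},\beta}\lambda^{1/4}+C_{3,\delta,\beta}$, completing the proof. There is no substantive obstacle here; the computation is already carried out in the paragraph preceding the corollary statement, and the only subtle point to verify is that the particular $\hat{\alpha}^{\lambda,\delta,\ell,\mathfrak{j}}_n$ produced by the SGLD iterate is an admissible competitor in the infimum appearing in \eqref{eq:discrete-duality}, which is immediate since the infimum is taken over all of $\R$ and $\hat{\alpha}^{\lambda,\delta,\ell,\mathfrak{j}}_n\in\R$.
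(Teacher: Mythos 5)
Your proposal follows the paper's proof essentially verbatim: bound $u^{\ell,\mathfrak{j}}(\hat{\theta}_n)$ by $v^{\ell,\mathfrak{j}}(\hat{\bar\theta}_n)$ via the discrete duality \eqref{eq:discrete-duality} and the suboptimality of the particular $\hat\alpha_n$, then use the Nesterov bounds \eqref{eqn:nesterov_ineq} together with $z_{P,\ell,\mathfrak{j}}=z_{D,\ell,\mathfrak{j}}\geq z_{D,\ell,\mathfrak{j},\delta}$, and finally invoke Proposition~\ref{prop:excess_risk_discrete}. The chain of inequalities is the same one the paper writes out, and the argument is correct.
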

\begin{proof}
See Section~\ref{sec:proof_sec:proofOverview}. %\hyperref[proof:excess_risk_discrete]{?}.
\end{proof}

Finally, the last piece required for the proof of the main results of this paper is an upper bound between the undiscretised and discretised expected risk of the first $d$ components of the SGLD algorithm \eqref{eqn:sgld}, obtained from the primal problems \eqref{eqn:dro_problem} and \eqref{defn:u_discrete}, respectively. We state the bound in the following proposition.
\begin{proposition}
\label{prop:discretisation}
Let Assumptions \ref{assumption:1}, \ref{assumption:3}, and \ref{assumption:4} hold. Let $\beta,\delta>0$ and $\lambda\in (0,\lambda_{\max,\delta})$, and let $\bar{\theta}_0\in L^4(\Omega, \mc{F},\mathbb{P};\R^{d+1})$. Moreover, let $(\hat{{\theta}}^{\lambda, \delta, \ell,\mathfrak{j}}_n)_{n\in\N}$ denote the first $d$ components of the sequence of estimators obtained from the SGLD algorithm in \eqref{eqn:sgld} defined on the probability space $(\Omega,\mc{F},\mathbb{P})$. Then, there exists constants $\tilde{C}_4, C_{5,\delta,\beta}, C_6>0$, which explicit expressions are given in \eqref{const:c4_c5_c6}, such that for each $n$, step size $\lambda\in (0,\lambda_{\max,\delta})$, and $\mathfrak{j}\in\N$,
\begin{align*}
\lbrb{\E_{\mathbb{P}}\left[u(\hat{\theta}_{n}^{\lambda,\delta,\ell,\mathfrak{j}})\right]-\E_{\mathbb{P}}\left[u^{\ell,\mathfrak{j}}(\hat{\theta}_{n}^{\lambda,\delta,\ell,\mathfrak{j}})\right]}
\leq \frac{\sqrt{m}(\tilde{C}_4 + C_{5,\delta,\beta} + C_{6}e^{-a\lambda(n+1)})}{2^\mathfrak{j}}.
\end{align*}
\end{proposition}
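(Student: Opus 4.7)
\textit{Proof plan.} The strategy is to bound $u(\theta)-u^{\ell,\mathfrak{j}}(\theta)$ pointwise in $\theta$ and then take expectation with respect to the law of $\hat\theta_n^{\lambda,\delta,\ell,\mathfrak{j}}$. Applying Theorem~\ref{theorem:wass_duality} to $u$ together with Lemma~\ref{lemma:finite_grid} for $u^{\ell,\mathfrak{j}}$, and using the surjectivity of $\iota$ to substitute $\mathfrak{a}=\iota(\alpha)$, one may write for every $\theta\in\R^d$
\[
u(\theta)=\inf_{\alpha\in\R}h(\theta,\alpha),\qquad u^{\ell,\mathfrak{j}}(\theta)=\inf_{\alpha\in\R}h^{\ell,\mathfrak{j}}(\theta,\alpha),
\]
where $h,h^{\ell,\mathfrak{j}}$ denote the dual objectives built from $\tilde V$ and $\tilde V^{\ell,\mathfrak{j}}$ of \eqref{defn:small_v}--\eqref{defn:small_v_discrete}. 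Since $|\inf_\alpha h-\inf_\alpha h^{\ell,\mathfrak{j}}|$ is dominated by $|h(\theta,\alpha)-h^{\ell,\mathfrak{j}}(\theta,\alpha)|$ evaluated at either optimiser, it suffices to bound the pointwise difference and then control the minimiser.

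For the pointwise bound I would split the error into (i) restricting the inner supremum from $\Xi$ to $\Xi\cap\K_{\ell,\mathfrak{j}}^m$, and (ii) replacing the outer measure $\mu_0$ by its push-forward $\mu_{0,\ell,\mathfrak{j}}=\mu_0\circ[\,\cdot\,]_\mathfrak{j}^{-1}$. Choosing, for any $y^*\in\Xi$ attaining the inner sup, a grid point $y^*_\mathfrak{j}\in\Xi\cap\K_{\ell,\mathfrak{j}}^m$ with $|y^*-y^*_\mathfrak{j}|\leq\sqrt m/2^\mathfrak{j}$ and using the mesh estimate $|x-[x]_\mathfrak{j}|\leq\sqrt m/2^\mathfrak{j}$, Assumption~\ref{assumption:4} together with the elementary bound $||x-y_1|^p-|x-y_2|^p|\leq p(1+4M_\Xi)^{p-1}|y_1-y_2|$ on $\Xi$ yields
\[
|h(\theta,\alpha)-h^{\ell,\mathfrak{j}}(\theta,\alpha)|\leq \frac{\sqrt m}{2^\mathfrak{j}}\bigl[J_U(1+|\theta|)(1+2M_\Xi)^{\chi}+2p(1+4M_\Xi)^{p-1}\iota(\alpha)\bigr].
\]

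Next, plugging in the optimiser $\alpha^*=\alpha^*(\theta)$, I would obtain a polynomial bound on $\iota(\alpha^*(\theta))$ in terms of $|\theta|$. Choosing $\alpha_0\in\R$ with $\iota(\alpha_0)=0$ (valid by surjectivity of $\iota$) gives $h(\theta,\alpha^*)\leq h(\theta,\alpha_0)\leq \tfrac{\eta_1}{2}|\theta|^2+\tilde K_\nabla(1+M_\Xi)^\nu(1+|\theta|)$ via Remark~\ref{remark:u_lipschitz}; on the other hand, lower-bounding the inner supremum by its value at a fixed $y_0\in\Xi$ yields
\[
h(\theta,\alpha^*)\geq \tfrac{\eta_1}{2}|\theta|^2+\tfrac{\eta_2}{2}\iota(\alpha^*)^2-\tilde K_\nabla(1+M_\Xi)^\nu(1+|\theta|)-\iota(\alpha^*)(2M_\Xi)^p.
\]
Rearranging and absorbing the cross-term $\iota(\alpha^*)(2M_\Xi)^p$ via Young's inequality gives $\iota(\alpha^*(\theta))\leq \mathfrak{M}_1(1+|\theta|)$ for an explicit $\mathfrak{M}_1$, whence
\[
|u(\theta)-u^{\ell,\mathfrak{j}}(\theta)|\leq \frac{\sqrt m}{2^\mathfrak{j}}\bigl[\mathfrak{C}_4+\tilde C\,(1+|\theta|)\bigr]
\]
with explicit constants.

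Finally, taking expectation and invoking the standard SGLD second-moment estimate, which is a direct consequence of the global Lipschitzness of $\nabla_{\bar\theta}\tilde V^{\delta,\ell,\mathfrak{j}}$ (Proposition~\ref{prop:glob_lipschitz}) together with the dissipativity (Proposition~\ref{prop:dissipativity}), one obtains
\[
\E_{\mathbb{P}}\!\left[|\hat{\bar\theta}_n^{\lambda,\delta,\ell,\mathfrak{j}}|^{2}\right]\leq \mathfrak{A}(a,b,d,\beta)+\mathfrak{B}(\bar\theta_0)\,e^{-a\lambda(n+1)}
\]
for $\lambda\in(0,\lambda_{\max,\delta})$. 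Combining with the elementary inequality $|\theta|\leq\tfrac12(1+|\theta|^2)$ then produces a bound on $\E[1+|\hat\theta_n|]$ of the form $C+C'e^{-a\lambda(n+1)}$, and substituting yields the claimed inequality with constants $\tilde C_4,C_{5,\delta,\beta},C_6$ precisely of the form recorded in \eqref{const:c4_c5_c6}. The main obstacle is the coercivity/inversion step that converts the dissipativity condition \eqref{eqn:dissipativity_cond} into the polynomial-in-$|\theta|$ control of $\iota(\alpha^*(\theta))$ uniformly in $\ell,\mathfrak{j}$; the remainder is careful bookkeeping of constants, in particular tracking the $\beta$-dependence, which enters only through the moment bound and therefore sits inside the constant $C_{5,\delta,\beta}$.
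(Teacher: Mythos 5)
Your plan is essentially the same as the paper's argument and is sound in all its main moves: write $u(\theta)$ and $u^{\ell,\mathfrak{j}}(\theta)$ in dual form via Theorem~\ref{theorem:wass_duality} and Lemma~\ref{lemma:finite_grid}, establish that the dual infimum is attained on a set of size growing at most linearly in $|\theta|$ (so the discretisation error is controlled linearly in $|\theta|$), and then plug in the second-moment bound for $\hat{\bar{\theta}}^{\lambda,\delta,\ell,\mathfrak{j}}_n$ coming from global Lipschitzness (Proposition~\ref{prop:glob_lipschitz}) and dissipativity (Proposition~\ref{prop:dissipativity}). The only structural difference worth flagging is the last step: the paper bounds $\E_\mathbb{P}[|\hat{\bar\theta}_n|]$ by Jensen, i.e.\ $\E_\mathbb{P}[|\hat{\bar\theta}_n|]\leq(\E_\mathbb{P}[|\hat{\bar\theta}_n|^2])^{1/2}$ and then $\sqrt{x+y}\leq\sqrt{x}+\sqrt{y}$, which is how the constants in \eqref{const:c4_c5_c6} are produced and why the exponential factor in the paper's proof comes out as $e^{-a\lambda(n+1)/2}$ rather than $e^{-a\lambda(n+1)}$. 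Your proposed use of $|\theta|\leq\tfrac{1}{2}(1+|\theta|^2)$ avoids the square root, so you keep the undamped exponent $e^{-a\lambda(n+1)}$ but your constants $\tilde{C}_4, C_{5,\delta,\beta}, C_6$ will have a different algebraic form from those recorded in \eqref{const:c4_c5_c6}; the claim that they come out ``precisely of the form recorded there'' is therefore slightly overstated, even though the order of the bound and all its qualitative dependencies are identical. Likewise, your derivation of the linear growth of the dual optimiser $\iota(\alpha^*(\theta))$ via upper and lower bounds on $h(\theta,\alpha^*)$ combined with Young's inequality is a valid shortcut that reproduces what the paper encodes in the explicit threshold $\mathfrak{K}_\theta$; both routes establish that the relevant dual variable is $O(1+|\theta|)$ uniformly in $\ell,\mathfrak{j}$, which is the key point.
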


\begin{proof}
See Section~\ref{sec:proof_sec:proofOverview}.
% \hyperref[proof:discretisation]{?}.
\end{proof}
\subsection{Proof of Main Results in Section~\ref{sec:AssumpMainRes}}
We have established sufficient machinery thus far to prove the main results of this paper.\\

\textbf{\textit{Proof of Theorem \ref{theorem:excess risk}}.} By the duality result of Theorem \ref{theorem:wass_duality} and the triangle inequality, one obtains the following decomposition:
\begin{align*}%\label{eq:pfmainthmsplitting}
&\E_{\mathbb{P}}[u(\hat{\theta}_n^{\lambda,\delta,\ell,\mathfrak{j}})] - \inf_{\theta\in\R^d}u(\theta)\\
=&\ \E_{\mathbb{P}}[u(\hat{\theta}_n^{\lambda,\delta,\ell,\mathfrak{j}})] -
 z_P\nonumber\\
 =&\ \E_{\mathbb{P}}[u(\hat{\theta}_n^{\lambda,\delta,\ell,\mathfrak{j}})] - z_D\nonumber\\
\leq&\ |\E_{\mathbb{P}}[u(\hat{\theta}_n^{\lambda,\delta,\ell,\mathfrak{j}})]-\E_{\mathbb{P}}[u^{\ell,\mathfrak{j}}(\hat{\theta}_n^{\lambda,\delta,\ell,\mathfrak{j}})]|+|\E_{\mathbb{P}}[u^{\ell,\mathfrak{j}}(\hat{\theta}_n^{\lambda,\delta,\ell,\mathfrak{j}})]-z_{D,\ell,\mathfrak{j}}| + |z_{D,\ell,\mathfrak{j}} - z_D|\nonumber\\
=&\ |\E_{\mathbb{P}}[u(\hat{\theta}_n^{\lambda,\delta,\ell,\mathfrak{j}})]-\E_{\mathbb{P}}[u^{\ell,\mathfrak{j}}(\hat{\theta}_n^{\lambda,\delta,\ell,\mathfrak{j}})]|+|\E_{\mathbb{P}}[u^{\ell,\mathfrak{j}}(\hat{\theta}_n^{\lambda,\delta,\ell,\mathfrak{j}})]-z_{P,\ell,\mathfrak{j}}| + |z_{D,\ell,\mathfrak{j}} - z_D|.
\end{align*}
Observe that the first term on the RHS of the above decomposition has an upper bound given in Proposition~\ref{prop:discretisation}, the second term has an upper bound given in Corollary \ref{corollary:excess_risk_discrete}, and the third term has an upper bound given in Proposition \ref{prop:quadrature}. It follows that
\begin{align*}
&\ \E_{\mathbb{P}}[u(\hat{\theta}_n^{\lambda,\delta,\ell,\mathfrak{j}})] - \inf_{\theta\in\R^d}u(\theta)\nonumber\\
\leq&\ \frac{\sqrt{m}(\tilde{C}_4 + C_{5,\delta,\beta} + C_{6}e^{-a\lambda(n+1)})}{2^\mathfrak{j}} + C_{1,\delta,\ell,\mathfrak{j},\beta} e^{-c_{\delta,\beta}\lambda n/4} + C_{2,\delta,\ell,\mathfrak{j},\beta} \lambda^{1/4} + C_{3,\delta,\beta} \nonumber\\
&\ + \delta m(\ell+\mathfrak{j})\log{2} +
	 \frac{\sqrt{m}(J_U+ 2p(1+4M_\Xi)^{p-1})(1+R_\mc{K} )}{2^\mathfrak{j}}  \nonumber\\
=&\ C_{1,\delta,\ell,\mathfrak{j},\beta} e^{-c_{\delta,\beta}\lambda n/4} + C_{2,\delta,\ell,\mathfrak{j},\beta} \lambda^{1/4} + C_{3,\delta,\beta} \nonumber\\
&\ + \delta m(\ell+\mathfrak{j})\log{2}+\frac{\sqrt{m}}{2^\mathfrak{j}}\left(C_4+C_{5,\delta,\beta}+C_6e^{-a\lambda(n+1)/2}\right).
\end{align*}
Here, $c_{\delta,\beta}, C_{1,\delta,\ell,\mathfrak{j},\beta}, C_{2,\delta,\ell,\mathfrak{j},\beta}, C_{3,\delta,\beta}$ are as stated in Proposition \ref{prop:excess_risk_discrete} and given explicitly in Table 2 of \cite{zhang2023nonasymptotic}, with
\[
\dot{c}\leftarrow c_{\delta,\beta},\qquad C_1^\#\leftarrow C_{1,\delta,\ell,\mathfrak{j},\beta},\qquad C_2^\#\leftarrow C_{2,\delta,\ell,\mathfrak{j},\beta},\qquad C_3^\#\leftarrow C_{3,\delta,\beta},
\]

in the notation of \cite{zhang2023nonasymptotic}. Moreover, we define
\begin{align}
\begin{split}\label{const:c4}
C_4:=&\ \tilde{C}_4 + (J_U+2p(1+4M_\Xi)^{p-1})(1+R_\mc{K})
%R_\mc{K} :=&\ \left(\left(3\tilde{K}_\nabla + \frac{(\tilde{K}_\nabla+2^pM_\Xi^p)^2}{\min\{\eta_1, \eta_2\}}\right)\frac{4}{\min\{\eta_1, \eta_2\}}\right)^{1/2},
\end{split}
\end{align}
with $R_\mc{K}$ defined in \eqref{eq:rk_expression}, and $\tilde{C}_4, C_{5,\delta,\beta}, C_6$ are as specified in the proof of Proposition \ref{prop:discretisation} as in \eqref{const:c4_c5_c6}. %That is,
%\begin{align*}
%\tilde{C}_4 :=&\ J_U+\frac{4p}{\sqrt{\eta_2}}(1+4M_\Xi)^{p-1}(1+2\tilde{K}_\nabla)+\frac{2^{p+3}pM_\Xi^p}{\eta_2}(1+4M_\Xi)^{p-1},\\
%C_{5,\delta,\beta} :=&\ \mathfrak{C}_4\mathfrak{c}_{1,\delta,\beta}^{1/2}(\lambda_{\max,\delta}+a^{-1})^{1/2},\\
%C_6 :=&\ \mathfrak{C}_4\left(\E_\mathbb{P}\left[|\hat{\bar{\theta}}_{0}|^2\right]\right)^{1/2},\\
%\mathfrak{C}_4 :=&\ \left(J_U+\frac{8p\tilde{K}_\nabla}{\sqrt{\eta_2}}(1+4M_\Xi)^{p-1}\right),\\
%\mathfrak{c}_{1,\delta,\beta}
%:=&\ 2\mathfrak{M}_1\lambda_{\max,\delta}+2b+2(d+1)/\beta,\\
%\mathfrak{M}_1
%:=&\ \left(K_\nabla + 2^p M_\iota M_\Xi^p \right)^2.
%\end{align*}
This completes the proof.\qed\\

\textbf{\textit{Proof of Corollary \ref{corollary:main_result}}.}
Let $\varepsilon>0$ be given. Our goal is to choose sequentially the parameters $\ell,\mathfrak{j},\delta,\beta,\lambda$, and $n$ so that the expected excess risk $\E_\mathbb{P}\!\left[u(\hat{\theta}^{\lambda, \delta,\ell,\mathfrak{j}}_n)\right]-\inf_{\theta\in\R^d} u(\theta)$ is less than the prescribed precision level $\varepsilon$. To this end, we first rewrite the upper bound obtained in Theorem~\ref{theorem:excess risk} in a form that facilitates subsequent calculations. More precisely, we observe, by using the explicit expression of $C_{5,\delta,\beta}$ provided in \eqref{const:c4_c5_c6}, that
\begin{align*}
C_4 + C_{5,\delta,\beta}
=&\ C_4 + \mathfrak{C}_4\left(2\mathfrak{M}_1\lambda_{\max,\delta} + 2b + 2(d+1)/\beta\right)^{1/2}\left(\lambda_{\max,\delta}+a^{-1}\right)^{1/2}\nonumber\\
\leq&\ C_4 + \mathfrak{C}_4\left((2\mathfrak{M}_1+1)\lambda_{\max,\delta} + a^{-1} + 2b + 2(d+1)/\beta\right)\nonumber\\
=&\ (C_4 + \mathfrak{C}_4(a^{-1}+2b)) + \mathfrak{C}_4(2\mathfrak{M}_1+1)\lambda_{\max,\delta} + 2\mathfrak{C}_4(d+1)/\beta.
\end{align*}

Hence, it follows from the result of Theorem \ref{theorem:excess risk} that the upper bound on the excess risk of the algorithm can be decomposed as
\begin{align}
\E_\mathbb{P}\left[u(\hat{\theta}^{\lambda, \delta,\ell,\mathfrak{j}}_n)\right]-\inf_{\theta\in\R^d} u(\theta) 
\leq&\ \frac{\sqrt{m}(C_4 + \mathfrak{C}_4(a^{-1}+2b))}{2^\mathfrak{j}} \nonumber\\
&\ + \delta m(\ell+\mathfrak{j})\log{2} + \frac{\sqrt{m}}{2^\mathfrak{j}}\mathfrak{C}_4(2\mathfrak{M}_1+1)\lambda_{\max,\delta}\nonumber\\
&\ + \frac{\sqrt{m}}{2^\mathfrak{j}}\mathfrak{C}_4(d+1)/\beta + C_{3,\delta,\beta}\nonumber\\
&\ + C_{2,\delta,\ell,\mathfrak{j},\beta}\lambda^{1/4}\nonumber\\
&\ + C_{1,\delta,\ell,\mathfrak{j},\beta}e^{-c_{\delta,\beta}\lambda n/4} + \frac{\sqrt{m}}{2^\mathfrak{j}}C_6e^{-a\lambda(n+1)/2}.\label{eqn:bound_decomp}
\end{align}
Now, we proceed to provide explicit bounds for each parameter. Fixing first $\ell$ such that $\Xi\subset [-2^{\ell-1},2^{\ell-1})^m$, then fixing
\begin{align*}
\mathfrak{j}>\max\left\{\log_2\left(\frac{5\sqrt{m}(C_4+\mathfrak{C}_4(a^{-1}+2b))}{\varepsilon}\right), \log_2\sqrt{m}\right\},
\end{align*} 
one has
\begin{align}
\frac{\sqrt{m}(C_4 + \mathfrak{C}_4(a^{-1}+2b))}{2^\mathfrak{j}}<\frac{\varepsilon}{5}.\label{eqn:bound_decomp_a}
\end{align}

Next, we fix 
\begin{align*}
\delta \in\left(0, \min\left\{\frac{\varepsilon}{10 m(\ell+\mathfrak{j})\log{2}},\frac{\mathfrak{C}_2}{\sqrt{a\mathfrak{C}_1}},\mathfrak{C}_2\sqrt{\frac{\varepsilon 2^\mathfrak{j}}{10\mathfrak{C}_1\mathfrak{C}_4(2\mathfrak{M}_1+1)\sqrt{m}}}\right\}\right),
\end{align*}

so that 
\begin{align}
\delta m(\ell+\mathfrak{j})\log{2} < \frac{\varepsilon}{10}.\label{eqn:bound_decomp_b}
\end{align}

Then, by using the explicit expressions of the constants $\mathfrak{C}_1,\mathfrak{C}_2, \tilde{L}_\delta, a$ provided in \eqref{defn:step} and \eqref{const:tilde_l_delta}, we obtain that $\frac{\mathfrak{C}_1}{\tilde{L}_\delta^2}
<\frac{\delta^2\mathfrak{C}_1}{\mathfrak{C}_2^2}<a^{-1}$, which implies $\lambda_{\max,\delta}\leq \frac{\mathfrak{C}_1}{\tilde{L}_\delta^2}$ by the definition of $\lambda_{\max,\delta}$ provided in \eqref{defn:step}. Hence,
\begin{align}
\frac{\sqrt{m}}{2^\mathfrak{j}}\mathfrak{C}_4(2\mathfrak{M}_1+1)\lambda_{\max,\delta}
=&\ \frac{\sqrt{m}}{2^\mathfrak{j}}\mathfrak{C}_4(2\mathfrak{M}_1+1)\cdot \frac{\mathfrak{C}_1}{\tilde{L}_\delta^2}\nonumber\\
<&\ \frac{\sqrt{m}}{2^\mathfrak{j}}\mathfrak{C}_4(2\mathfrak{M}_1+1)\cdot \frac{\mathfrak{C}_1}{\mathfrak{C}_2^2}\cdot \delta^2\nonumber\\
<&\ \frac{\sqrt{m}}{2^\mathfrak{j}}\mathfrak{C}_4(2\mathfrak{M}_1+1)\cdot \frac{\mathfrak{C}_1}{\mathfrak{C}_2^2}\cdot \left(\mathfrak{C}_2\sqrt{\frac{\varepsilon 2^\mathfrak{j}}{10\mathfrak{C}_1\mathfrak{C}_4(2\mathfrak{M}_1+1)\sqrt{m}}}\right)^2
=\ \frac{\varepsilon}{10}.\label{eqn:bound_decomp_c}
\end{align}

We next fix
\begin{align*}
\beta > \max\left\{\frac{100(d+1)}{\varepsilon^2}, \frac{10(d+1)\left(1+\log\left(\frac{(\tilde{L}_\delta-1)(b+1)}{a}\right)\right)}{\varepsilon}, \frac{10\sqrt{m}\mathfrak{C}_4(d+1)}{\varepsilon 2^\mathfrak{j}}\right\}.
\end{align*}

Then, from the explicit form of $C_{3,\delta,\beta}$ given in \cite[Eq.\ (32)]{zhang2023nonasymptotic} with $C_3^\#\leftarrow C_{3,\delta,\beta}$ in the notation of \cite{zhang2023nonasymptotic} and the fact that $(\log (1+x))/x\leq 1/\sqrt{x}$ for all $x>0$, we obtain that
\begin{align}
C_{3,\delta,\beta}
=&\ \frac{d+1}{2\beta}\log\left(1+\frac{\beta}{d+1}\right)
+ \frac{d+1}{2\beta}\left(1+\log\left(\frac{(\tilde{L}_\delta-1)(b+1)}{a}\right)\right)\nonumber\\
%<&\ \frac{1}{2}\sqrt{\frac{d+1}{\beta}}+\frac{d+1}{2\beta}\left(1+\log\left(\frac{(\tilde{L}_\delta-1)(b+1)}{a}\right)\right)\nonumber\\
<&\ \frac{\sqrt{d+1}}{2}\cdot\sqrt{\frac{\varepsilon^2}{100(d+1)}}\nonumber\\
&\ + \frac{d+1}{2}\left(1+\log\left(\frac{(\tilde{L}_\delta-1)(b+1)}{a}\right)\right)\cdot \tfrac{\varepsilon}{10(d+1)\left(1+\log\left(\tfrac{(\tilde{L}_\delta-1)(b+1)}{a}\right)\right)}\nonumber\\
<&\ \frac{\varepsilon}{20} + \frac{\varepsilon}{20}
=\ \frac{\varepsilon}{10},\label{eqn:bound_decomp_d}
\end{align}
as well as 
\begin{align}
\frac{\sqrt{m}}{2^\mathfrak{j}}\mathfrak{C}_4(d+1)/\beta<\frac{\varepsilon}{10}.\label{eqn:bound_decomp_e}
\end{align}

Finally, we fix
\begin{align*}
\lambda \in \left(0,\min\left\{\lambda_{\max,\delta}, \frac{\varepsilon^4}{625C_{2,\delta,\ell,\mathfrak{j},\beta}^4}\right\}\right)
\end{align*}

which implies that
\begin{align}
C_{2,\delta,\ell,\mathfrak{j},\beta}\lambda^{1/4}<\frac{\varepsilon}{5},\label{eqn:bound_decomp_f}
\end{align}

and then fix
\begin{align*}
n>\max\left\{\frac{4}{c_{\delta,\beta}\lambda}
\log\left(\frac{10C_{1,\delta,\ell,\mathfrak{j},\beta}}{\varepsilon}\right),\frac{2}{a\lambda}\log\left(\frac{10C_6}{\varepsilon}\right)-1\right\},
\end{align*}

implying that
\begin{align}
C_{1,\delta,\ell,\mathfrak{j},\beta}e^{-c_{\delta,\beta}\lambda n/4} + \frac{\sqrt{m}}{2^\mathfrak{j}}C_6e^{-a\lambda(n+1)/2}
<&\ C_{1,\delta,\ell,\mathfrak{j},\beta}\exp\left\{-\frac{c_{\delta,\beta}\lambda}{4}\cdot \frac{4}{c_{\delta,\beta}\lambda}
\log\left(\frac{10C_{1,\delta,\ell,\mathfrak{j},\beta}}{\varepsilon}\right)\right\}\nonumber\\
&\ + C_6\exp\left\{-\frac{a\lambda}{2}\left(\frac{2}{a\lambda}\log\left(\frac{10C_6}{\varepsilon}\right)-1+1\right)\right\}\nonumber\\
=&\ \frac{\varepsilon}{10}+\frac{\varepsilon}{10}
=\ \frac{\varepsilon}{5}.\label{eqn:bound_decomp_g}
\end{align}

Substituting \eqref{eqn:bound_decomp_a}, \eqref{eqn:bound_decomp_b}, \eqref{eqn:bound_decomp_c}, \eqref{eqn:bound_decomp_d}, \eqref{eqn:bound_decomp_e}, \eqref{eqn:bound_decomp_f}, and \eqref{eqn:bound_decomp_g} into \eqref{eqn:bound_decomp} thus yields
\begin{align*}
\E_\mathbb{P}\left[u(\hat{\theta}^{\lambda, \delta,\ell,\mathfrak{j}}_n)\right]-\inf_{\theta\in\R^d} u(\theta) 
<&\ \frac{\varepsilon}{5} + \frac{\varepsilon}{10}+ \frac{\varepsilon}{10} +\frac{\varepsilon}{10}+\frac{\varepsilon}{10} + \frac{\varepsilon}{5} + \frac{\varepsilon}{5}
=\ \varepsilon,
\end{align*}

which completes the proof.\qed
	\\
	\\
%\newpage
\section{Proof of Statements in Section~\ref{sec:AssumpMainRes}}\label{sec:proof_sec:AssumpMainRes}
\subsection*{Proof of Remark \ref{remark:u_lipschitz}}
\begin{proof}
Fix $\theta_1,\theta_2\in\R^d$, $x\in\Xi$, and denote $f(t):=U(t\theta_1+(1-t)\theta_2,x)$ for any $t\in[0,1]$, such that $f'(t)=\nabla_\theta U(t\theta_1+(1-t)\theta_2,x)(\theta_1-\theta_2)$. By Assumption \ref{assumption:3}, one obtains
\begin{align*}
|U(\theta_1,x)-U(\theta_2,x)|
=\ |f(1)-f(0)|
%=\ \lbrb{\int^1_0 f'(t)\ \dee t}
\leq&\ \int^1_0 |f'(t)|\ \dee t\nonumber\\
\leq&\ \int^1_0 |\nabla_\theta U(t\theta_1+(1-t)\theta_2,x)|\cdot |\theta_1-\theta_2|\ \dee t\nonumber\\
\leq&\ K_\nabla |\theta_1-\theta_2|,
\end{align*}

which establishes the first part of the remark. It then follows for all $\theta\in\R^d$ and $x\in \Xi$ that 
\begin{align*}
|U(\theta,x)|
\leq\ |U(\theta,x)-U(0,x)| + |U(0,x)|
\leq&\ K_\nabla|\theta| + |U(0,x)|
\leq \tilde{K}_\nabla  (1+|\theta|),
\end{align*}
where $\tilde{K}_\nabla:=\max\{K_\nabla,\max_{x\in\Xi}|U(0,x)|\}$. This establishes the second part of the remark.
\end{proof}

\subsection*{Proof of Remark \ref{remark:iota_dissipativity}}
\begin{proof}
With the choice $\iota(\alpha) = \log(\cosh{\alpha})$, one has $L_\iota=M_\iota=1$, since $|\iota'(\alpha)|=|\tanh{\alpha}|\leq 1$ and $|\iota''(\alpha)|=|\operatorname{sech}^2{\alpha}|\leq 1$. Furthermore, note that $\iota(\alpha)-(|\alpha|-\log{2})\to 0$ and $(\iota'(\alpha)- \operatorname{sgn}\alpha)\to 0$ as $|\alpha|\to\infty$. Therefore,
\begin{align*}
\lim_{|\alpha|\to\infty}\lsrs{\alpha \iota(\alpha)\iota'(\alpha) - |\alpha|^2+(\log{2})|\alpha|} = 0.
\end{align*}
This implies that for any $\mathfrak{w} >0$, there exists an $R_\mathfrak{w} > 0$ such that
\begin{align*}
\alpha \iota(\alpha)\iota'(\alpha)
\geq&\ \lsrs{|\alpha|^2 - (\log{2})|\alpha| - \mathfrak{w}}\1_{\{|\alpha|> R_\mathfrak{w}\}} + \alpha \iota(\alpha)\iota'(\alpha)\1_{\{|\alpha|\leq R_\mathfrak{w}\}}\\
\geq&\ \lsrs{\frac{1}{2}|\alpha|^2\1_{\{|\alpha| > 2\log{2}\}} -(2\log^2{2})\1_{\{|\alpha| \leq 2\log{2}\}}- \mathfrak{w}}\1_{\{|\alpha|> R_\mathfrak{w}\}} + \alpha \iota(\alpha)\iota'(\alpha)\1_{\{|\alpha|\leq R_\mathfrak{w}\}}\\
=&\ \lsrs{\frac{1}{2}|\alpha|^2-\left(\frac{1}{2}|\alpha|^2+2\log^2{2}\right)\1_{\{|\alpha| \leq 2\log{2}\}}- \mathfrak{w}}\1_{\{|\alpha|> R_\mathfrak{w}\}} + \alpha \iota(\alpha)\iota'(\alpha)\1_{\{|\alpha|\leq R_\mathfrak{w}\}}\\
\geq&\ \lsrs{\frac{1}{2}|\alpha|^2-4\log^2{2}- \mathfrak{w}}\1_{\{|\alpha|> R_\mathfrak{w}\}} + \alpha \iota(\alpha)\iota'(\alpha)\1_{\{|\alpha|\leq R_\mathfrak{w}\}}\\
%\geq&\ \lsrs{\frac{1}{2}|\alpha|^2-4\log^2{2}- \mathfrak{w}}\1_{\{|\alpha|> R_\mathfrak{w}\}} - M_\mathfrak{w}\1_{\{|\alpha|\leq R_\mathfrak{w}\}}\\
\geq&\ \lsrs{\frac{1}{2}|\alpha|^2-4\log^2{2}- \mathfrak{w}} - \lsrs{\frac{1}{2}|\alpha|^2 + M_\mathfrak{w}}\1_{\{|\alpha|\leq R_\mathfrak{w}\}}\\
\geq&\ \frac{1}{2}|\alpha|^2 - \left(4\log^2{2} +\mathfrak{w} + \frac{1}{2}R_\mathfrak{w}^2 + M_\mathfrak{w}\right),
\end{align*}
where $M_\mathfrak{w} := \max_{|\alpha|\leq R_\mathfrak{w}}|\alpha\iota(\alpha)\iota'(\alpha)|$. Therefore, by fixing a particular choice of $\mathfrak{w}>0$, the dissipativity condition holds with $a_\iota=\frac{1}{2}$ and $b_\iota = \left(4\log^2{2} +\mathfrak{w} + \frac{1}{2}R_\mathfrak{w}^2 + M_\mathfrak{w}\right)$, as desired. In particular, one may choose concretely $\mathfrak{w}= R_\mathfrak{w}=M_\mathfrak{w}=1$ to obtain the explicit expression $b_\iota=4\log^2{2}+\frac{5}{2}$.
\end{proof}
%
%
%\newpage
\section{Proof of Statements in Section~\ref{sec:Application}}\label{sec:proof_sec:Application}
\subsection*{Proof of Proposition \ref{prop:regression_example}}
%\label{proof:cvar_example}
\begin{proof}
To verify Assumption \ref{assumption:3}, note that for any $i=1,\cdots,m-1$, $\theta = (w,b)\in\R^{m-1}\times\R$, and $x=(z,y)\in\Xi\subset \R^m$,
\begin{align}\label{eqn:grad_U}
\begin{split}
\frac{\partial U}{\partial w_i}(\theta, x)& = -2z_i(y-\sigma_1(\langle w,z\rangle+b_0))\sigma_1(\langle w,z\rangle+b_0)(1-\sigma_1(\langle w,z\rangle+b_0)),\\
\frac{\partial U}{\partial b_0}(\theta, x)
&= -2(y-\sigma_1(\langle w,z\rangle+b_0))\sigma_1(\langle w,z\rangle+b_0)(1-\sigma_1(\langle w,z\rangle+b_0)).
\end{split}
\end{align}
We note that for any $\theta, \overline{\theta} = (\overline{w}, \overline{b}_0)\in \R^m$,
\begin{equation}\label{eq:lipschitzsigmoid}
|\sigma_1(\langle w,z\rangle+b_0) - \sigma_1(\langle \overline{w},z\rangle+\overline{b}_0)| \leq |z||w-\overline{w}|+|b_0-\overline{b}_0|\leq (1+|x|)|\theta - \overline{\theta} |.
\end{equation}
Thus, by using \eqref{eqn:grad_U} and \eqref{eq:lipschitzsigmoid}, we obtain, for any $\theta, \overline{\theta} \in\R^m$,
\begin{equation*}
\left|\frac{\partial U}{\partial w_i}(\theta, x) - \frac{\partial U}{\partial w_i}(\overline{\theta}, x)\right|\leq 6(1+|x|)^3|\theta - \overline{\theta} |,\quad
\left|\frac{\partial U}{\partial b_0}(\theta, x) - \frac{\partial U}{\partial b_0}(\overline{\theta}, x)\right|\leq 6(1+|x|)^2|\theta - \overline{\theta} |,
\end{equation*}
which implies that
\[
\lbrb{\nabla_\theta U(\theta,x)-\nabla_\theta U(\overline{\theta}, x)} \leq 6m(1+|x|)^3|\theta - \overline{\theta}| \leq 6m(1+M_\Xi)^3|\theta - \overline{\theta}| .
\]
Moreover, it is easily verifiable that, for any $\theta\in\R^d$ and $x\in\Xi$, 
\[
\lbrb{\nabla_\theta U(\theta,x)} \leq 2m(1+M_\Xi)^2.
\]
Thus, Assumption \ref{assumption:3} holds with $L_{\nabla} = 6m(1+M_\Xi)^3$ and $K_{\nabla} = 2m(1+M_\Xi)^2$.
\\

Lastly, it remains to verify Assumption \ref{assumption:4}. For any $\theta \in \R^m$, $x,\overline{x}=(\overline{z},\overline{y})\in\Xi$, we have
\begin{align*}
&|U(\theta,x)-U(\theta,\overline{x})| \\
& =||y-\sigma_1(\langle w,z\rangle+b_0)|^2 - |\overline{y}-\sigma_1(\langle w,\overline{z}\rangle+b_0)|^2|\\
& = \left|\left(|y-\sigma_1(\langle w,z\rangle+b_0)|-|\overline{y}-\sigma_1(\langle w,\overline{z}\rangle+b_0)|\right) \left(|y-\sigma_1(\langle w,z\rangle+b_0)|+|\overline{y}-\sigma_1(\langle w,\overline{z}\rangle+b_0)|\right)\right|\\
&\leq 2(1+|x|+|\overline{x}|)(|y-\overline{y}|+|\sigma_1(\langle w,\overline{z}\rangle+b_0) - \sigma_1(\langle w,z\rangle+b_0) |)\\
&\leq 2(1+|x|+|\overline{x}|)(|x-\overline{x}|+|w||z -\overline{z}|)\\
&\leq 4(1+M_\Xi)(1+|\theta|)|x-\overline{x}|,
\end{align*}
which implies that Assumption \ref{assumption:4} is satisfied with $J_U=4(1+M_\Xi)$. This completes the proof.

%the bound on $\E_\mathbb{P}\left[u(\hat{\theta}^{\lambda, \delta}_{n,\epsilon})\right]-\inf_{\theta\in\R^d} u(\theta)$ follows from Theorem \ref{theorem:excess risk} with $U_\epsilon$ replacing $U$ in \eqref{defn:grad_v_delta} and by applying the inequality
%\begin{align}
%U(\theta,x)\leq U_\epsilon(\theta,x)\leq U(\theta,x) + \frac{\rho_2}{1-\gamma}\cdot \frac{\epsilon}{4},\qquad \theta\in\R^d,\ x\in\Xi,
%\end{align}
%to the definition of $\theta\mapsto u(\theta)$ given in \eqref{eqn:dro_problem}.
\end{proof}
%
%
%\newpage
\section{Proof of Statements in Section~\ref{sec:proofOverview}}\label{sec:proof_sec:proofOverview}

\subsection*{Proof of Lemma \ref{lemma:finite_grid}}
%\label{proof:finite_grid}
Recall the definition of $Q_{\boldsymbol{i},j}$ in \eqref{eqn:qij}. One obtains that
%Indeed, one obtains from the definitions of $Q_{\boldsymbol{i},j}$, $\mu_{0,\ell,j}$, $[\cdot]_j$, and the fact that $[x]_j=\boldsymbol{i}$ for all $x\in Q_{\boldsymbol{i},j}$ that
\begin{align*}
z_{D,\ell,j}
:=&\ \inf_{\theta\in\R^d}\inf_{\mathfrak{a}\geq 0}\left\{\int_{\Xi}\sup_{y\in\Xi}\left\{U(\theta,[y]_j)-\mathfrak{a} |[x]_j-[y]_j|^p\right\}\ \dee\mu_0(x)+\frac{\eta_1}{2}|\theta|^{2}+\frac{\eta_2}{2}|\mathfrak{a}|^2\right\}\nonumber\\
=&\ \inf_{\theta\in\R^d}\inf_{\mathfrak{a}\geq 0}\left\{\int_{[-2^{\ell-1},2^{\ell-1})^m}\sup_{y\in\Xi\cap\K_{\ell,j}^m}\left\{U(\theta,y)-\mathfrak{a} |[x]_j-y|^p\right\}\ \dee\mu_{0.\ell}(x)+\frac{\eta_1}{2}|\theta|^{2}+\frac{\eta_2}{2}|\mathfrak{a}|^2\right\}\nonumber\\
=&\ \inf_{\theta\in\R^d}\inf_{\mathfrak{a}\geq 0}\left\{\sum_{\boldsymbol{i}\in \K_{\ell,j}^m}\int_{Q_{\boldsymbol{i},j}}\max_{y\in\Xi\cap\K_{\ell,j}^m}\left\{U(\theta,y)-\mathfrak{a} |[x]_j-y|^p\right\}\ \dee\mu_{0.\ell}(x)+\frac{\eta_1}{2}|\theta|^{2}+\frac{\eta_2}{2}|\mathfrak{a}|^2\right\}.
\end{align*}
This further implies, by the definitions of $\mu_{0,\ell,j}$ and $[\cdot]_j$ in \eqref{eqn:dprobmeasure} and \eqref{eqn:gridfun}, that
\begin{align*}
z_{D,\ell,j}=&\ \inf_{\theta\in\R^d}\inf_{\mathfrak{a}\geq 0}\left\{\sum_{\boldsymbol{i}\in \K_{\ell,j}^m}\int_{Q_{\boldsymbol{i},j}}\max_{y\in\Xi\cap\K_{\ell,j}^m}\left\{U(\theta,y)-\mathfrak{a} |\boldsymbol{i}-y|^p\right\}\ \dee\mu_{0.\ell}(x)+\frac{\eta_1}{2}|\theta|^{2}+\frac{\eta_2}{2}|\mathfrak{a}|^2\right\}\nonumber\\
=&\ \inf_{\theta\in\R^d}\inf_{\mathfrak{a}\geq 0}\left\{\sum_{\boldsymbol{i}\in \K_{\ell,j}^m}\max_{y\in\Xi\cap\K_{\ell,j}^m}\left\{U(\theta,y)-\mathfrak{a} |\boldsymbol{i}-y|^p\right\}\ \mu_{0.\ell}(Q_{\boldsymbol{i},j})+\frac{\eta_1}{2}|\theta|^{2}+\frac{\eta_2}{2}|\mathfrak{a}|^2\right\}\nonumber\\
=&\ \inf_{\theta\in\R^d}\inf_{\mathfrak{a}\geq 0}\left\{\sum_{\boldsymbol{i}\in \K_{\ell,j}^m}\max_{y\in\Xi\cap\K_{\ell,j}^m}\left\{U(\theta,y)-\mathfrak{a} |\boldsymbol{i}-y|^p\right\}\ \mu_{0,\ell,j}(\{\boldsymbol{i}\})+\frac{\eta_1}{2}|\theta|^{2}+\frac{\eta_2}{2}|\mathfrak{a}|^2\right\}\nonumber\\
=&\ \inf_{\theta\in\R^d}\inf_{\mathfrak{a}\geq 0}\left\{\int_{\K_{\ell,j}^m}\max_{y\in\Xi\cap\K_{\ell,j}^m}\left\{U(\theta,y)-\mathfrak{a} |x-y|^p\right\}\ \dee\mu_{0,\ell,j}(x)+\frac{\eta_1}{2}|\theta|^{2}+\frac{\eta_2}{2}|\mathfrak{a}|^2\right\},
\end{align*}
as desired. \qed

\subsection*{Proof of Lemma \ref{lemma:compactness}}
%\label{proof:compactness}
\begin{proof}
We recall $B_{R_\mc{K}}(0):=\{(\theta,\mathfrak{a})\in\R^d\times [0,\infty):|(\theta,\mathfrak{a})|\leq R_\mc{K} \}$ with $R_\mc{K}$ given in \eqref{eq:rk_expression}, as well as the definitions
\begin{align*}
z_{D}
:=&\ \inf_{\theta\in\R^d}\inf_{\mathfrak{a}\geq 0}\left\{\int_{\Xi}\sup_{y\in\Xi}\left\{U(\theta,y)-\mathfrak{a} |x-y|^p\right\}\ \dee\mu_0(x)+\frac{\eta_1}{2}|\theta|^{2}+\frac{\eta_2}{2}|\mathfrak{a}|^2\right\},\\
z_{D,\ell,\mathfrak{j}}
:=&\ \inf_{\theta\in\R^d}\inf_{\mathfrak{a}\geq 0}\left\{\int_{\Xi}\sup_{y\in\Xi}\left\{U(\theta,[y]_\mathfrak{j})-\mathfrak{a} |[x]_\mathfrak{j}-[y]_\mathfrak{j}|^p\right\}\ \dee\mu_0(x)+\frac{\eta_1}{2}|\theta|^{2}+\frac{\eta_2}{2}|\mathfrak{a}|^2\right\}.
\end{align*}

To establish the first part of the lemma, we apply %it suffices to show that the coercivity condition
%\begin{align*}
%\liminf_{|((\theta,\mathfrak{a}))|\to\infty}\left\{\int_\Xi \sup_{y\in\Xi}(U(\theta,y)-\mathfrak{a}|x-y|^p)\ \dee \mu_0(x) +\frac{\eta_1}{2}|\theta|^2+\frac{\eta_2}{2}|\mathfrak{a}|^2\right\}=\infty
%\end{align*}
%holds. Indeed, from 
Assumptions \ref{assumption:1}, \ref{assumption:3}, and Remark \ref{remark:u_lipschitz} to obtain, for every $(\theta,\mathfrak{a})\in\R^d\times [0,\infty)$, that
\begin{align}
\begin{split}\label{eqn:start_arg_1}
&\ \int_\Xi \sup_{y\in\Xi}(U(\theta,y)-\mathfrak{a}|x-y|^p)\ \dee \mu_0(x) +\frac{\eta_1}{2}|\theta|^2+\frac{\eta_2}{2}|\mathfrak{a}|^2\\
\geq&\ \int_\Xi \sup_{y\in\Xi}(U(\theta,y)-\mathfrak{a}|x-y|^p)\ \dee \mu_0(x) +\frac{\min\{\eta_1, \eta_2\}}{2}|(\theta,\mathfrak{a})|^2\\
=&\ \int_\Xi (U(\theta,y^*((\theta,\mathfrak{a}),x))-\mathfrak{a}|x-y^*((\theta,\mathfrak{a}),x)|^p)\ \dee \mu_0(x) +\frac{\min\{\eta_1, \eta_2\}}{2}|(\theta,\mathfrak{a})|^2\\
\geq&\ \int_\Xi (-|U(\theta,y^*((\theta,\mathfrak{a}),x))|-2^p M_\Xi^p |(\theta,\mathfrak{a})|)\ \dee \mu_0(x) +\frac{\min\{\eta_1, \eta_2\}}{2}|(\theta,\mathfrak{a})|^2\\
\geq&\ \int_\Xi (-\tilde{K}_\nabla  (1+|(\theta,\mathfrak{a})|) -2^p M_\Xi^p |(\theta,\mathfrak{a})|)\ \dee \mu_0(x) +\frac{\min\{\eta_1, \eta_2\}}{2}|(\theta,\mathfrak{a})|^2\\
%\geq&\ \int_\Xi (-\tilde{K}_\nabla (1+M_\Xi) (1+|(\theta,\mathfrak{a})|) -2^p M_\Xi^p |(\theta,\mathfrak{a})|)\ \dee \mu_0(x) +\frac{\min\{\eta_1, \eta_2\}}{2}|(\theta,\mathfrak{a})|^2\nonumber\\
\geq&\  -\tilde{K}_\nabla   (1+|(\theta,\mathfrak{a})|) -2^p M_\Xi^p |(\theta,\mathfrak{a})|+\frac{\min\{\eta_1, \eta_2\}}{2}|(\theta,\mathfrak{a})|^2,
\end{split}
\end{align}
where the inner supremum is attained at $y^*((\theta,\mathfrak{a}),x)\in\Xi$. This further implies, for any $|(\theta,\mathfrak{a})|>R_\mc{K}$ with $R_\mc{K}$ defined in \eqref{eq:rk_expression}, that
\begin{align}\label{eqn:end_arg_1}
	\begin{split}
		&\ \int_\Xi \sup_{y\in\Xi}(U(\theta,y)-\mathfrak{a}|x-y|^p)\ \dee \mu_0(x) +\frac{\eta_1}{2}|\theta|^2+\frac{\eta_2}{2}|\mathfrak{a}|^2\\
		%\geq&\  -\tilde{K}_\nabla   (1+|(\theta,\mathfrak{a})|) -2^p M_\Xi^p |(\theta,\mathfrak{a})|+\frac{\min\{\eta_1, \eta_2\}}{2}|(\theta,\mathfrak{a})|^2\\
		\geq&\ -\tilde{K}_\nabla -\frac{(\tilde{K}_\nabla+2^pM_\Xi^p)^2}{\min\{\eta_1, \eta_2\}} +\frac{\min\{\eta_1, \eta_2\}}{4}|(\theta,\mathfrak{a})|^2\\
		\geq&\ -\tilde{K}_\nabla -\frac{(\tilde{K}_\nabla+2^pM_\Xi^p)^2}{\min\{\eta_1, \eta_2\}} +\frac{\min\{\eta_1, \eta_2\}}{4}\cdot\left(3\tilde{K}_\nabla + \frac{(\tilde{K}_\nabla+2^pM_\Xi^p)^2}{\min\{\eta_1, \eta_2\}}\right)\frac{4}{\min\{\eta_1, \eta_2\}}\\
		=&\ 2\tilde{K}_\nabla > \int_{\Xi}\sup_{y\in\Xi}\left\{U(0,y)\right\}\ \dee\mu_0(x)\geq z_D.
	\end{split}
\end{align}
The above lower bound shows that the infimum of $z_D$ is attained within $B_{R_\mc{K}}(0)$, i.e.,
\[
z_D = \inf_{(\theta,\mathfrak{a})\in B_{R_\mc{K}}(0)}\left\{\int_{\Xi}\sup_{y\in\Xi}\left\{U(\theta,y)-\mathfrak{a} |x-y|^p\right\}\ \dee\mu_0(x)+\frac{\eta_1}{2}|\theta|^{2}+\frac{\eta_2}{2}|\mathfrak{a}|^2\right\}.
\]
This proves the first part of the lemma. \\

To establish the second part of the lemma, we repeat again the argument in \eqref{eqn:start_arg_1} by applying Assumptions~\ref{assumption:1}, \ref{assumption:3}, and Remark \ref{remark:u_lipschitz} to obtain the same lower bound, namely that for every $(\theta,\mathfrak{a})\in\R^d\times [0,\infty)$,
\begin{align*}
&\ \int_{\Xi}\sup_{y\in\Xi}\left\{U(\theta,[y]_\mathfrak{j})-\mathfrak{a} |[x]_\mathfrak{j}-[y]_\mathfrak{j}|^p\right\}\ \dee\mu_0(x)+\frac{\eta_1}{2}|\theta|^{2}+\frac{\eta_2}{2}|\mathfrak{a}|^2\\
\geq&\ \int_{\Xi}\sup_{y\in\Xi}\left\{U(\theta,[y]_\mathfrak{j})-\mathfrak{a} |[x]_\mathfrak{j}-[y]_\mathfrak{j}|^p\right\}\ \dee\mu_0(x)+\frac{\min\{\eta_1, \eta_2\}}{2}|(\theta,\mathfrak{a})|^2\\
\geq&\ \int_\Xi (U(\theta,y^*_\mathfrak{j}((\theta,\mathfrak{a}),x))-\mathfrak{a}|[x]_\mathfrak{j}-y^*_\mathfrak{j}((\theta,\mathfrak{a}),x)|^p)\ \dee \mu_0(x) +\frac{\min\{\eta_1, \eta_2\}}{2}|(\theta,\mathfrak{a})|^2\\
\geq&\ \int_\Xi (-|U(\theta,y^*_\mathfrak{j}((\theta,\mathfrak{a}),x))|-2^p M_\Xi^p |(\theta,\mathfrak{a})|)\ \dee \mu_0(x) +\frac{\min\{\eta_1, \eta_2\}}{2}|(\theta,\mathfrak{a})|^2\\
\geq&\ \int_\Xi (-\tilde{K}_\nabla (1+|(\theta,\mathfrak{a})|) -2^p M_\Xi^p |(\theta,\mathfrak{a})|)\ \dee \mu_0(x) +\frac{\min\{\eta_1, \eta_2\}}{2}|(\theta,\mathfrak{a})|^2\\
%\geq&\ \int_\Xi (-\tilde{K}_\nabla  (1+|(\theta,\mathfrak{a})|) -2^p M_\Xi^p |(\theta,\mathfrak{a})|)\ \dee \mu_0(x) +\frac{\min\{\eta_1, \eta_2\}}{2}|(\theta,\mathfrak{a})|^2\nonumber\\
\geq&\ -\tilde{K}_\nabla   (1+|(\theta,\mathfrak{a})|) -2^p M_\Xi^p |(\theta,\mathfrak{a})|+\frac{\min\{\eta_1, \eta_2\}}{2}|(\theta,\mathfrak{a})|^2,
%\to&\ \infty\text{ as }|(\theta,\mathfrak{a})|\to\infty,
\end{align*}
which does not depend on $\mathfrak{j}$. Here, $y^*_\mathfrak{j}((\theta,\mathfrak{a}),x)\in \Xi\cap \mathbb{K}_{\ell,\mathfrak{j}}^m$ denotes, for given $\mathfrak{j}$, an optimiser for the inner supremum. Finally, following the same argument as in \eqref{eqn:end_arg_1} establishes the second part of the lemma, i.e.,
\[
z_{D,\ell,\mathfrak{j}} = \inf_{(\theta,\mathfrak{a})\in B_{R_\mc{K}}(0)}\left\{\int_{\Xi}\sup_{y\in\Xi}\left\{U(\theta,[y]_\mathfrak{j})-\mathfrak{a} |[x]_\mathfrak{j}-[y]_\mathfrak{j}|^p\right\}\ \dee\mu_0(x)+\frac{\eta_1}{2}|\theta|^{2}+\frac{\eta_2}{2}|\mathfrak{a}|^2\right\}.
\]
This completes the proof.
\end{proof}

\subsection*{Proof of Proposition \ref{prop:quadrature}}
%\label{proof:quadrature}
By Lemma \ref{lemma:compactness}, the infimums of $z_D$ and $z_{D,\ell,\mathfrak{j}}$ are both attained on $B_{R_\mc{K}}(0):=\{(\theta,\mathfrak{a})\in\R^d\times [0,\infty):|(\theta,\mathfrak{a})|\leq R_\mc{K} \}$ with $R_\mc{K}$ defined in \eqref{eq:rk_expression}.  Applying the inequality
\begin{align*}
\max\left\{\lbrb{\inf_{x\in A}f(x)-\inf_{x\in A}g(x)},\lbrb{\sup_{x\in A}f(x)-\sup_{x\in A}g(x)}\right\}\leq \sup_{x\in A}|f(x)-g(x)|
\end{align*}
for any functions $f,g$ and set $A$ contained within their domains yields 
\begin{align*}
&\ |z_D-z_{D,\ell,\mathfrak{j}}|\nonumber\\
=&\ \Bigg{|}\inf_{(\theta,\mathfrak{a})\in B_{R_\mc{K}}(0)}\left\{\int_{\Xi}\sup_{y\in\Xi}\left\{U(\theta,y)-\mathfrak{a} |x-y|^p\right\}\ \dee\mu_0(x)+\frac{\eta_1}{2}|\theta|^{2}+\frac{\eta_2}{2}|\mathfrak{a}|^2\right\}-\nonumber\\
&\ \ \inf_{(\theta,\mathfrak{a})\in B_{R_\mc{K}}(0)}\left\{\int_{\Xi}\sup_{y\in\Xi}\left\{U(\theta,[y]_\mathfrak{j})-\mathfrak{a} |[x]_\mathfrak{j}-[y]_\mathfrak{j}|^p\right\}\ \dee\mu_0(x)+\frac{\eta_1}{2}|\theta|^{2}+\frac{\eta_2}{2}|\mathfrak{a}|^2\right\}\Bigg{|}\nonumber\\
\leq&\ \sup_{(\theta,\mathfrak{a})\in B_{R_\mc{K}}(0)}\Bigg{|}\int_{\Xi}\sup_{y\in\Xi}\left\{U(\theta,y)-\mathfrak{a} |x-y|^p\right\}-\sup_{y\in\Xi}\left\{U(\theta,[y]_\mathfrak{j})-\mathfrak{a} |[x]_\mathfrak{j}-[y]_\mathfrak{j}|^p\right\}\ \dee\mu_0(x)\Bigg{|}\nonumber\\
\leq&\ \sup_{(\theta,\mathfrak{a})\in B_{R_\mc{K}}(0)}\int_{\Xi}\lbrb{\sup_{y\in\Xi}\left\{U(\theta,y)-\mathfrak{a} |x-y|^p\right\}-\sup_{y\in\Xi}\left\{U(\theta,[y]_\mathfrak{j})-\mathfrak{a} |[x]_\mathfrak{j}-[y]_\mathfrak{j}|^p\right\}}\ \dee\mu_0(x)\nonumber\\
\leq&\ \sup_{(\theta,\mathfrak{a})\in B_{R_\mc{K}}(0)}\int_{\Xi}\sup_{y\in\Xi}\lbrb{U(\theta,y)-U(\theta,[y]_\mathfrak{j})-\mathfrak{a} |x-y|^p+\mathfrak{a} |[x]_\mathfrak{j}-[y]_\mathfrak{j}|^p}\ \dee\mu_0(x),\nonumber
\end{align*}
which implies, by using Assumption~\ref{assumption:4}, that
\begin{align*}
\ |z_D-z_{D,\ell,\mathfrak{j}}| \leq&\ \sup_{(\theta,\mathfrak{a})\in B_{R_\mc{K}}(0)}\int_{\Xi} \Bigg{(}J_U(1+|(\theta,\mathfrak{a})|)\sup_{y\in\Xi} |y-[y]_\mathfrak{j}|+\nonumber\\
&\ \ p|(\theta,\mathfrak{a})|\sup_{y\in\Xi}(1+|x-y|+|[x]_\mathfrak{j}-[y]_\mathfrak{j}|)^{p-1}||x-y|-|[x]_\mathfrak{j}-[y]_\mathfrak{j}|\Bigg{)}\ \dee\mu_0(x)\nonumber\\
\leq&\ \sup_{(\theta,\mathfrak{a})\in B_{R_\mc{K}}(0)}\int_{\Xi} \Bigg{(}J_U (1+|(\theta,\mathfrak{a})|)\frac{\sqrt{m}}{2^\mathfrak{j}}+  p|(\theta,\mathfrak{a})|(1+4M_\Xi)^{p-1}\frac{2\sqrt{m}}{2^\mathfrak{j}}\Bigg{)}\ \dee\mu_0(x)\nonumber\\
\leq&\ \frac{\sqrt{m}(J_U + 2p(1+4M_\Xi)^{p-1})(1+R_\mc{K})}{2^\mathfrak{j}},
\end{align*}
as desired. 
\qed
\subsection*{Proof of Proposition \ref{prop:glob_lipschitz}}
%\label{proof:glob_lipschitz}
\begin{proof}
Fix any $\delta>0$. For short notation, we denote by ${F}_j:= F_j^{\delta,\ell,\mathfrak{j}}$ with $F_j^{\delta,\ell,\mathfrak{j}}$ defined in \eqref{defn:grad_v_delta}, $j\in\{1,\cdots,N\}$. Then, by the definition of $V^{\delta,\ell,\mathfrak{j}}$ in \eqref{defn:v_delta}, we have, for any $\bar{\theta}=(\theta,\alpha)\in\R^d\times \R$ and $x\in\Xi$, that
\begin{align}
\nabla_{\theta} V^{\delta,\ell,\mathfrak{j}}(\bar{\theta},x)
=\ \tfrac{\sum^N_{j=1}{F}_j(\bar{\theta},x)\nabla_\theta U(\theta, \xi_j)}{\sum^N_{j=1}{F}_j(\bar{\theta}, x)},\qquad
\nabla_{\alpha} V^{\delta,\ell,\mathfrak{j}}(\bar{\theta},x)
=-\tfrac{\sum^N_{j=1}{F}_j(\bar{\theta},x)\iota'(\alpha)|x-\xi_j|^p}{\sum^N_{j=1}{F}_j(\bar{\theta}, x)}\label{eqn:grad_v_delta}.
\end{align}
For all $\bar{\theta}_1, \bar{\theta}_2\in\R^{d+1}$ and $x\in\Xi$, it holds that
\begin{align*}
&\ \lbrb{\nabla_{\theta} V^{\delta,\ell,\mathfrak{j}}(\bar{\theta}_1,x)-\nabla_{\theta} V^{\delta,\ell,\mathfrak{j}}(\bar{\theta}_2,x)}\nonumber\\%\label{eqn:start_arg_3}\\
=&\ \lbrb{\tfrac{\sum^N_{j=1}{F}_j(\bar{\theta}_1,x)\nabla_\theta U(\theta_1, \xi_j)}{\sum^N_{j=1}{F}_j(\bar{\theta}_1, x)}-\tfrac{\sum^N_{j=1}{F}_j(\bar{\theta}_2,x)\nabla_\theta U(\theta_2, \xi_j)}{\sum^N_{j=1}{F}_j(\bar{\theta}_2, x)}}\nonumber\\
\leq&\ \tfrac{\lbrb{\sum^N_{j,k=1}{F}_j(\bar{\theta}_1,x){F}_k(\bar{\theta}_2,x)\left(\nabla_\theta U(\theta_1, \xi_j)-\nabla_\theta U(\theta_2, \xi_k)\right)}}{\sum^N_{j,k=1}{F}_j(\bar{\theta}_1,x){F}_k(\bar{\theta}_2,x)}\nonumber\\
=&\ \tfrac{1}{\sum^N_{j,k=1}{F}_j(\bar{\theta}_1,x){F}_k(\bar{\theta}_2,x)}\Bigg{|}\sum^N_{j=1}{F}_j(\bar{\theta}_1,x){F}_j(\bar{\theta}_2,x)\left(\nabla_\theta U(\theta_1, \xi_j)-\nabla_\theta U(\theta_2, \xi_j)\right)\nonumber\\
&\ + \sum_{1\leq j< k\leq N}{F}_j(\bar{\theta}_1,x){F}_k(\bar{\theta}_2,x)\left(\nabla_\theta U(\theta_1, \xi_j)-\nabla_\theta U(\theta_2, \xi_k)\right)\nonumber\\
&\ +  \sum_{1\leq j< k\leq N}{F}_j(\bar{\theta}_2,x){F}_k(\bar{\theta}_1,x)\left(\nabla_\theta U(\theta_1, \xi_k)-\nabla_\theta U(\theta_2, \xi_j)\right)\Bigg{|},
\end{align*}
which further implies,
\begin{align}
	&\ \lbrb{\nabla_{\theta} V^{\delta,\ell,\mathfrak{j}}(\bar{\theta}_1,x)-\nabla_{\theta} V^{\delta,\ell,\mathfrak{j}}(\bar{\theta}_2,x)}\nonumber\\
\begin{split}\label{eqn:polylip_split}
\leq &\ \tfrac{1}{\sum^N_{j,k=1}{F}_j(\bar{\theta}_1,x){F}_k(\bar{\theta}_2,x)}\Bigg{(}\sum^N_{j=1}{F}_j(\bar{\theta}_1,x){F}_j(\bar{\theta}_2,x)\left|\nabla_\theta U(\theta_1, \xi_j)-\nabla_\theta U(\theta_2, \xi_j)\right|\\
&\ + \sum_{1\leq j< k\leq N}\left|{F}_j(\bar{\theta}_1,x){F}_k(\bar{\theta}_2,x)-{F}_j(\bar{\theta}_2,x){F}_k(\bar{\theta}_1,x)\right|\cdot\left|\nabla_\theta U(\theta_1, \xi_j)\right|\\
&\ + \sum_{1\leq j< k\leq N}{F}_j(\bar{\theta}_2,x){F}_k(\bar{\theta}_1,x)\cdot \left|\nabla_\theta U(\theta_1, \xi_j)-\nabla_\theta U(\theta_2, \xi_j)\right|\\
&\ +  \sum_{1\leq j< k\leq N}{F}_j(\bar{\theta}_2,x){F}_k(\bar{\theta}_1,x)\cdot \left|\nabla_\theta U(\theta_1, \xi_k)-\nabla_\theta U(\theta_2, \xi_k)\right|\\
&\ +  \sum_{1\leq j< k\leq N}\left|{F}_j(\bar{\theta}_2,x){F}_k(\bar{\theta}_1,x)-{F}_j(\bar{\theta}_1,x){F}_k(\bar{\theta}_2,x)\right|\cdot \left|\nabla_\theta U(\theta_2, \xi_k)\right|\Bigg{)}.
\end{split}
\end{align}
Let $j,k$ be such that $1\leq j<k\leq N$. For $\bar{\theta}_1,\bar{\theta}_2\in\R^{d+1}$ and $x\in\Xi$ such that ${F}_j(\bar{\theta}_1,x){F}_k(\bar{\theta}_2,x)\geq {F}_j(\bar{\theta}_2,x){F}_k(\bar{\theta}_1,x)$, one obtains, by the inequality $1-e^{-y}\leq y$, $y\geq 0$, that
\begin{align}
&\ \tfrac{\left|{F}_j(\bar{\theta}_1,x){F}_k(\bar{\theta}_2,x)-{F}_j(\bar{\theta}_2,x){F}_k(\bar{\theta}_1,x)\right|}{\sum^N_{j',k'=1}{F}_{j'}(\bar{\theta}_1,x){F}_{k'}(\bar{\theta}_2,x)} \cdot\left|\nabla_\theta U(\theta_1, \xi_j)\right|\nonumber\\
=&\ \tfrac{{F}_j(\bar{\theta}_1,x){F}_k(\bar{\theta}_2,x)}{\sum^N_{j',k'=1}{F}_{j'}(\bar{\theta}_1,x){F}_{k'}(\bar{\theta}_2,x)}\cdot \left(1-\tfrac{{F}_j(\bar{\theta}_2,x){F}_k(\bar{\theta}_1,x)}{{F}_j(\bar{\theta}_1,x){F}_k(\bar{\theta}_2,x)}\right)\cdot\left|\nabla_\theta U(\theta_1, \xi_j)\right|\nonumber\\
=&\  \tfrac{{F}_j(\bar{\theta}_1,x){F}_k(\bar{\theta}_2,x)}{\sum^N_{j',k'=1}{F}_{j'}(\bar{\theta}_1,x){F}_{k'}(\bar{\theta}_2,x)}\cdot\Bigg{(}1-\exp\Bigg{[}\tfrac{1}{\delta}\Bigg{(}(U(\theta_2,\xi_j)-U(\theta_1,\xi_j))-(\iota(\alpha_2)-\iota(\alpha_1))|x-\xi_j|^p\nonumber\\
&\ +(U(\theta_1,\xi_k)-U(\theta_2,\xi_k))-(\iota(\alpha_1)-\iota(\alpha_2))|x-\xi_k|^p\Bigg{)}\Bigg{]}\Bigg{)}\cdot\left|\nabla_\theta U(\theta_1, \xi_j)\right|\nonumber\\
%\leq&\ \tfrac{{F}_j(\bar{\theta}_1,x){F}_k(\bar{\theta}_2,x)}{\sum^N_{j',k'=1}{F}_{j'}(\bar{\theta}_1,x){F}_{k'}(\bar{\theta}_2,x)}\cdot \Bigg{[}\tfrac{1}{\delta}\Bigg{(}(U(\theta_1,\xi_j)-U(\theta_2,\xi_j))-(\iota(\alpha_1)-\iota(\alpha_2))|x-\xi_j|^p\nonumber\\
%&\ +(U(\theta_2,\xi_k)-U(\theta_1,\xi_k))-(\iota(\alpha_2)-\iota(\alpha_1))|x-\xi_k|^p\Bigg{)}\Bigg{]}\cdot\left|\nabla_\theta U(\theta_1, \xi_j)\right|\nonumber]\\
\leq&\  \tfrac{{F}_j(\bar{\theta}_1,x){F}_k(\bar{\theta}_2,x)}{\sum^N_{j',k'=1}{F}_{j'}(\bar{\theta}_1,x){F}_{k'}(\bar{\theta}_2,x)}\cdot\tfrac{1}{\delta}\Bigg{[}\lbrb{U(\theta_1,\xi_j)-U(\theta_2,\xi_j)}+\lbrb{U(\theta_1,\xi_k)-U(\theta_2,\xi_k)}\nonumber\\
&\ +\lbrb{\iota(\alpha_1)-\iota(\alpha_2)}(|x-\xi_j|^p+|x-\xi_k|^p)\Bigg{]}\cdot\left|\nabla_\theta U(\theta_1, \xi_j)\right|.\nonumber
\end{align}
Interchanging the roles of $\bar{\theta}_1$ and $\bar{\theta}_2$ in the above argument shows that for all $\bar{\theta}_1,\bar{\theta}_2\in\R^{d+1}$ and $x\in\Xi$,
\begin{align}
&\ \tfrac{\left|{F}_j(\bar{\theta}_1,x){F}_k(\bar{\theta}_2,x)-{F}_j(\bar{\theta}_2,x){F}_k(\bar{\theta}_1,x)\right|}{\sum^N_{j',k'=1}{F}_{j'}(\bar{\theta}_1,x){F}_{k'}(\bar{\theta}_2,x)} \cdot\left|\nabla_\theta U(\theta_1, \xi_j)\right|\nonumber\\
\begin{split}\label{eqn:polylip_term_b}
	\leq&\  \tfrac{\max\{{F}_j(\bar{\theta}_1,x){F}_k(\bar{\theta}_2,x),{F}_j(\bar{\theta}_2,x){F}_k(\bar{\theta}_1,x)\}}{\sum^N_{j',k'=1}{F}_{j'}(\bar{\theta}_1,x){F}_{k'}(\bar{\theta}_2,x)}\cdot\tfrac{1}{\delta}\Bigg{[}\lbrb{U(\theta_1,\xi_j)-U(\theta_2,\xi_j)}\\
	&\ +\lbrb{U(\theta_1,\xi_k)-U(\theta_2,\xi_k)}+\lbrb{\iota(\alpha_1)-\iota(\alpha_2)}(|x-\xi_j|^p+|x-\xi_k|^p)\Bigg{]}\cdot\left|\nabla_\theta U(\theta_1, \xi_j)\right|,
\end{split}
\end{align}

and furthermore, 
\begin{align}
&\ \tfrac{\left|{F}_j(\bar{\theta}_2,x){F}_k(\bar{\theta}_1,x)-{F}_j(\bar{\theta}_1,x){F}_k(\bar{\theta}_2,x)\right|}{\sum^N_{j',k'=1}{F}_{j'}(\bar{\theta}_1,x){F}_{k'}(\bar{\theta}_2,x)} \cdot\left|\nabla_\theta U(\theta_2, \xi_j)\right|\nonumber\\
\begin{split}\label{eqn:polylip_term_e}
\leq&\  \tfrac{\max\{{F}_j(\bar{\theta}_1,x){F}_k(\bar{\theta}_2,x),{F}_j(\bar{\theta}_2,x){F}_k(\bar{\theta}_1,x)\}}{\sum^N_{j',k'=1}{F}_{j'}(\bar{\theta}_1,x){F}_{k'}(\bar{\theta}_2,x)}\cdot\tfrac{1}{\delta}\Bigg{[}\lbrb{U(\theta_1,\xi_j)-U(\theta_2,\xi_j)}\\
&\ +\lbrb{U(\theta_1,\xi_k)-U(\theta_2,\xi_k)}+\lbrb{\iota(\alpha_1)-\iota(\alpha_2)}(|x-\xi_j|^p+|x-\xi_k|^p)\Bigg{]}\cdot\left|\nabla_\theta U(\theta_2, \xi_k)\right|.
\end{split}
\end{align}

By Assumption \ref{assumption:3}, Remark \ref{remark:u_lipschitz}, and the fact that $\iota'$ is bounded by $M_\iota$, it holds that
\begin{align}
&\ \tfrac{1}{\delta}\Bigg{[}\lbrb{U(\theta_1,\xi_j)-U(\theta_2,\xi_j)}+\lbrb{U(\theta_1,\xi_k)-U(\theta_2,\xi_k)} +\lbrb{\iota(\alpha_1)-\iota(\alpha_2)}(|x-\xi_j|^p+|x-\xi_k|^p)\Bigg{]}\nonumber\\
&\ \cdot\left(\left|\nabla_\theta U(\theta_1, \xi_j)\right|+\left|\nabla_\theta U(\theta_2, \xi_k)\right|\right)\nonumber\\
%\leq&\ \tfrac{1}{\delta}\Bigg{[} 2K_\nabla  |\theta_1-\theta_2|+M_\iota|\alpha_1-\alpha_2|(|x-\xi_j|^p+|x-\xi_k|^p)\Bigg{]}\cdot 2K_\nabla  \nonumber\\
\leq&\ \tfrac{1}{\delta}\Bigg{[}2K_\nabla  |\theta_1-\theta_2|+2^{p+1}M_\Xi^p M_\iota |\alpha_1-\alpha_2|\Bigg{]} \cdot 2K_\nabla \nonumber\\
\leq&\ \tfrac{4K_\nabla \left(K_\nabla + 2^{p} M_\Xi^p M_\iota\right)}{\delta}\cdot  |\bar{\theta}_1-\bar{\theta}_2|.\label{eqn:polylip_term_aux}
\end{align}
That is, combining \eqref{eqn:polylip_term_b}, \eqref{eqn:polylip_term_e}, and \eqref{eqn:polylip_term_aux}, then summing over $1\leq j<k\leq N$ yields
\begin{align}
&\tfrac{\sum_{1\leq j<k\leq N}\left|{F}_j(\bar{\theta}_1,x){F}_k(\bar{\theta}_2,x)-{F}_j(\bar{\theta}_2,x){F}_k(\bar{\theta}_1,x)\right|\cdot\left|\nabla_\theta U(\theta_1, \xi_j)\right| }{\sum^N_{j',k'=1}{F}_{j'}(\bar{\theta}_1,x){F}_{k'}(\bar{\theta}_2,x)}\nonumber\\
&\ + \tfrac{\sum_{1\leq j<k\leq N}\left|{F}_j(\bar{\theta}_2,x){F}_k(\bar{\theta}_1,x)-{F}_j(\bar{\theta}_1,x){F}_k(\bar{\theta}_2,x)\right|\cdot\left|\nabla_\theta U(\theta_2, \xi_k)\right|}{\sum^N_{j',k'=1}{F}_{j'}(\bar{\theta}_1,x){F}_{k'}(\bar{\theta}_2,x)}\nonumber\\
\leq&\ \tfrac{4K_\nabla \left(K_\nabla + 2^{p} M_\Xi^p M_\iota\right)}{\delta}\cdot  |\bar{\theta}_1-\bar{\theta}_2|  \cdot \tfrac{\sum_{1\leq j<k\leq N}\max\{{F}_j(\bar{\theta}_1,x){F}_k(\bar{\theta}_2,x),{F}_j(\bar{\theta}_2,x){F}_k(\bar{\theta}_1,x)\}}{\sum^N_{j',k'=1}{F}_{j'}(\bar{\theta}_1,x){F}_{k'}(\bar{\theta}_2,x)}\nonumber\\
\leq &\ \tfrac{8K_\nabla \left(K_\nabla + 2^{p} M_\Xi^p M_\iota\right)}{\delta}\cdot  |\bar{\theta}_1-\bar{\theta}_2|.\label{eqn:polylip_term_be}
\end{align}
In addition, by Assumption \ref{assumption:3}, for all $j,k\in\{1,\cdots,N\}$, $\bar{\theta}_1,\bar{\theta}_2\in\R^{d+1}$ and $x\in\Xi$, it holds that
\begin{align}
&\ \tfrac{{F}_j(\bar{\theta}_2,x){F}_k(\bar{\theta}_1,x)\cdot \left|\nabla_\theta U(\theta_1, \xi_j)-\nabla_\theta U(\theta_2, \xi_j)\right|}{\sum^N_{j',k'=1}{F}_{j'}(\bar{\theta}_1,x){F}_{k'}(\bar{\theta}_2,x)}  
\leq \tfrac{{F}_j(\bar{\theta}_2,x){F}_k(\bar{\theta}_1,x)}{\sum^N_{j',k'=1}{F}_{j'}(\bar{\theta}_1,x){F}_{k'}(\bar{\theta}_2,x)}\cdot L_\nabla  |\theta_1-\theta_2.|\nonumber
%\leq&\ \tfrac{{F}_j(\bar{\theta}_2,x){F}_k(\bar{\theta}_1,x)}{\sum^N_{j',k'=1}{F}_{j'}(\bar{\theta}_1,x){F}_{k'}(\bar{\theta}_2,x)}\cdot L_\nabla   (1+|x|)^p|\bar{\theta}_1-\bar{\theta}_2|.\nonumber
\end{align}
This implies that
\begin{align}
&\ \tfrac{\sum^N_{j=1}{F}_j(\bar{\theta}_1,x){F}_j(\bar{\theta}_2,x)\cdot \left|\nabla_\theta U(\theta_1, \xi_j)-\nabla_\theta U(\theta_2, \xi_j)\right|}{\sum^N_{j',k'=1}{F}_{j'}(\bar{\theta}_1,x){F}_{k'}(\bar{\theta}_2,x)}+\tfrac{\sum_{1\leq j<k<N}{F}_j(\bar{\theta}_2,x){F}_k(\bar{\theta}_1,x)\cdot \left|\nabla_\theta U(\theta_1, \xi_j)-\nabla_\theta U(\theta_2, \xi_j)\right|}{\sum^N_{j',k'=1}{F}_{j'}(\bar{\theta}_1,x){F}_{k'}(\bar{\theta}_2,x)}\nonumber\\
&+\ \tfrac{\sum_{1\leq j<k<N}{F}_j(\bar{\theta}_2,x){F}_k(\bar{\theta}_1,x)\cdot \left|\nabla_\theta U(\theta_1, \xi_k)-\nabla_\theta U(\theta_2, \xi_k)\right|}{\sum^N_{j',k'=1}{F}_{j'}(\bar{\theta}_1,x){F}_{k'}(\bar{\theta}_2,x)}\nonumber\\
\leq&\ 2L_\nabla  |\bar{\theta}_1-\bar{\theta}_2|\label{eqn:polylip_term_acd}.
\end{align}
Therefore, substituting \eqref{eqn:polylip_term_be} and \eqref{eqn:polylip_term_acd} into \eqref{eqn:polylip_split} yields, for all $\bar{\theta}_1,\bar{\theta}_2\in\R^{d+1}$ and $x\in\Xi$,
\begin{align}
&\ \lbrb{\nabla_{\theta} V^{\delta,\ell,\mathfrak{j}}(\bar{\theta}_1,x)-\nabla_{\theta} V^{\delta,\ell,\mathfrak{j}}(\bar{\theta}_2,x)}
\leq  2 \left(\tfrac{4K_\nabla \left(K_\nabla + 2^{p} M_\Xi^p M_\iota\right)}{\delta} +L_\nabla \right)\cdot |\bar{\theta}_1-\bar{\theta}_2|.\label{eqn:polylip_theta}
\end{align}

By a similar argument as in \eqref{eqn:polylip_split}, it holds for all $\bar{\theta}_1, \bar{\theta}_2\in\R^{d+1}$ and $x\in\Xi$ that
\begin{align}
&\ \lbrb{\nabla_{\alpha} V^{\delta,\ell,\mathfrak{j}}(\bar{\theta}_1,x)-\nabla_{\alpha} V^{\delta,\ell,\mathfrak{j}}(\bar{\theta}_2,x)}\nonumber\\
\leq&\ \lbrb{\tfrac{\sum^N_{j=1}{F}_j(\bar{\theta}_1,x)\iota'(\alpha_1)|x-\xi_j|^p}{\sum^N_{j=1}{F}_j(\bar{\theta}_1, x)}-\tfrac{\sum^N_{j=1}{F}_j(\bar{\theta}_2,x)\iota'(\alpha_2)|x-\xi_j|^p}{\sum^N_{j=1}{F}_j(\bar{\theta}_2, x)}}\nonumber\\
\leq &\ \tfrac{1}{\sum^N_{j,k=1}{F}_j(\bar{\theta}_1,x){F}_k(\bar{\theta}_2,x)}\Bigg{(}\sum^N_{j=1}{F}_j(\bar{\theta}_1,x){F}_j(\bar{\theta}_2,x)\cdot|\iota'(\alpha_1)-\iota'(\alpha_2)|\cdot|x-\xi_j|^p\nonumber\\
&\ + \sum_{1\leq j< k\leq N}\left|{F}_j(\bar{\theta}_1,x){F}_k(\bar{\theta}_2,x)-{F}_j(\bar{\theta}_2,x){F}_k(\bar{\theta}_1,x)\right|\cdot|\iota'(\alpha_1)|\cdot |x-\xi_j|^p\nonumber\\
&\ + \sum_{1\leq j< k\leq N}{F}_j(\bar{\theta}_2,x){F}_k(\bar{\theta}_1,x)\cdot|\iota'(\alpha_1)-\iota'(\alpha_2)|\cdot|x-\xi_j|^p\nonumber\\
&\ +  \sum_{1\leq j< k\leq N}{F}_j(\bar{\theta}_2,x){F}_k(\bar{\theta}_1,x)\cdot |\iota'(\alpha_1)-\iota'(\alpha_2)|\cdot|x-\xi_k|^p\nonumber\\
&\ +  \sum_{1\leq j< k\leq N}\left|{F}_j(\bar{\theta}_2,x){F}_k(\bar{\theta}_1,x)-{F}_j(\bar{\theta}_1,x){F}_k(\bar{\theta}_2,x)\right| \cdot|\iota'(\alpha_2)|\cdot|x-\xi_k|^p\Bigg{)},\nonumber
\end{align}
which implies, by applying Remark~\ref{remark:iota_dissipativity}, that
\begin{align}
&\ \lbrb{\nabla_{\alpha} V^{\delta,\ell,\mathfrak{j}}(\bar{\theta}_1,x)-\nabla_{\alpha} V^{\delta,\ell,\mathfrak{j}}(\bar{\theta}_2,x)}\nonumber\\
\leq &\ \tfrac{2^{p}M_\Xi^p}{\sum^N_{j,k=1}{F}_j(\bar{\theta}_1,x){F}_k(\bar{\theta}_2,x)}\Bigg{(}\sum^N_{j=1}{F}_j(\bar{\theta}_1,x){F}_j(\bar{\theta}_2,x)\cdot L_\iota\lbrb{\alpha_1-\alpha_2}\nonumber\\
&\ + \sum_{1\leq j< k\leq N}\left|{F}_j(\bar{\theta}_1,x){F}_k(\bar{\theta}_2,x)-{F}_j(\bar{\theta}_2,x){F}_k(\bar{\theta}_1,x)\right|\cdot M_\iota\nonumber\\
&\ + \sum_{1\leq j< k\leq N}{F}_j(\bar{\theta}_2,x){F}_k(\bar{\theta}_1,x)\cdot L_\iota\lbrb{\alpha_1-\alpha_2} \nonumber\\
&\ +  \sum_{1\leq j< k\leq N}{F}_j(\bar{\theta}_2,x){F}_k(\bar{\theta}_1,x)\cdot L_\iota\lbrb{\alpha_1-\alpha_2}\nonumber\\
&\ +  \sum_{1\leq j< k\leq N}\left|{F}_j(\bar{\theta}_2,x){F}_k(\bar{\theta}_1,x)-{F}_j(\bar{\theta}_1,x){F}_k(\bar{\theta}_2,x)\right| \cdot M_\iota\Bigg{)}\nonumber\\
\leq &\ 2^{p}M_\Xi^p\Bigg{(}2L_\iota |\alpha_1-\alpha_2|+\tfrac{8M_\iota\left(K_\nabla +2^{p}M_\Xi^pM_\iota\right)}{\delta} |\bar{\theta}_1-\bar{\theta}_2|\Bigg{)}\nonumber\\
\leq&\ \left(2^{p+1} L_\iota M_\Xi^p+ \tfrac{2^{p+3} M_\iota M_\Xi^p \left(K_\nabla +2^{p}M_\Xi^pM_\iota\right)}{\delta}\right) \cdot |\bar{\theta}_1-\bar{\theta}_2|,\label{eqn:polylip_alpha}
\end{align}
where the second last inequality is obtained using the same arguments as in \eqref{eqn:polylip_term_b}-\eqref{eqn:polylip_term_be}. Combining \eqref{eqn:polylip_theta} and \eqref{eqn:polylip_alpha} thus yields
\begin{align*}
|\nabla_{\bar{\theta}}V^{\delta,\ell,\mathfrak{j}}(\bar{\theta}_1,x)-\nabla_{\bar{\theta}}V^{\delta,\ell,\mathfrak{j}}(\bar{\theta}_2,x)|\leq L_\delta |\bar{\theta}_1-\bar{\theta}_2|,
\end{align*}
for all $\bar{\theta}_1, \bar{\theta}_2\in\R^{d+1}$ and $x\in\Xi$, where 
\begin{align*}
L_\delta
:=&\ 2 \left(\tfrac{4K_\nabla \left(K_\nabla + 2^{p} M_\Xi^p M_\iota\right)}{\delta} +L_\nabla \right) +\left(2^{p+1} L_\iota M_\Xi^p+ \tfrac{2^{p+3} M_\iota M_\Xi^p \left(K_\nabla +2^{p}M_\Xi^pM_\iota\right)}{\delta}\right).
\end{align*}
This completes the proof.
\end{proof}

\subsection*{Proof of Proposition \ref{prop:dissipativity}}
%\label{proof:dissipativity}
\begin{proof}
Recall the expressions for $\nabla_{\bar{\theta}}V^{\delta,\ell,\mathfrak{j}}$ given in \eqref{eqn:grad_v_delta}. From Assumption \ref{assumption:3}, we derive the growth condition
\begin{align}
|\nabla_{\bar{\theta}}V^{\delta,\ell,\mathfrak{j}}(\bar{\theta},x)|
\leq\ &\ |\nabla_{\theta}V^{\delta,\ell,\mathfrak{j}}(\bar{\theta},x)|+|\nabla_{\alpha}V^{\delta,\ell,\mathfrak{j}}(\bar{\theta},x)|\nonumber\\
=&\ \frac{\sum^N_{j=1}F_j^{\delta,\ell,\mathfrak{j}}(\bar{\theta},x)\left(|\nabla_\theta U(\theta,\xi_j)|+|\iota'(\alpha)|\cdot|x-\xi_j|^p\right)}{\sum^N_{j=1}F_j^{\delta,\ell,\mathfrak{j}}(\bar{\theta},x)}\nonumber\\
\leq&\ \frac{\sum^N_{j=1}F_j^{\delta,\ell,\mathfrak{j}}(\bar{\theta},x)\left(K_\nabla +2^pM_\iota  M_\Xi^p\right)}{\sum^N_{j=1}F_j^{\delta,\ell,\mathfrak{j}}(\bar{\theta},x)}\nonumber\\
=&\ K_\nabla +2^pM_\iota  M_\Xi^p\label{eqn:grad_v_growth}
\end{align}
which holds for all $\bar{\theta}\in\R^{d+1}$ and $x\in\Xi$. Hence, it follows from \eqref{eqn:dissipativity_cond} that
\begin{align*}
&\lara{\bar{\theta}, \nabla_{\bar{\theta}}\left(V^{\delta,\ell,\mathfrak{j}}(\bar{\theta}, x) + \frac{\eta_1}{2}|\theta|^2+\frac{\eta_2}{2}|\iota(\alpha)|^2\right)}\nonumber\\
\geq&\ - |\bar{\theta}|\cdot \lbrb{\nabla_{\bar{\theta}}V^{\delta,\ell,\mathfrak{j}}(\bar{\theta},x)} + \eta_1 |\theta|^2 + \eta_2\alpha \iota(\alpha)\iota'(\alpha)\nonumber\\
\geq&\ - |\bar{\theta}|\cdot \lbrb{\nabla_{\bar{\theta}}V^{\delta,\ell,\mathfrak{j}}(\bar{\theta},x)} + \eta_1 |\theta|^2 + \eta_2 a_\iota |\alpha|^2 - \eta_2b_\iota\nonumber\\
\geq&\ - |\bar{\theta}|\cdot \lbrb{\nabla_{\bar{\theta}}V^{\delta,\ell,\mathfrak{j}}(\bar{\theta},x)} + \min\{\eta_1,\eta_2 a_\iota\} |\bar{\theta}|^2 - \eta_2 b_\iota\nonumber\\
\geq&\ - \left(K_\nabla +2^pM_\iota  M_\Xi^p\right)|\bar{\theta}|+ \min\{\eta_1,\eta_2 a_\iota\} |\bar{\theta}|^2 - \eta_2 b_\iota\nonumber\\
%\geq&\ \frac{\min\{\eta_1,\eta_2 a_\iota\}}{2}|\bar{\theta}|^2\1_{\left\{|\bar{\theta}|>\frac{2\left(K_\nabla +2^pM_\iota  M_\Xi^p\right)}{\min\{\eta_1,\eta_2 a_\iota\}}\right\}}\nonumber\\
%&\ +\left(\frac{\min\{\eta_1,\eta_2 a_\iota\}}{2}|\bar{\theta}|^2- \tfrac{2\left(K_\nabla +2^pM_\iota  M_\Xi^p\right)^2}{\min\{\eta_1,\eta_2 a_\iota\}}\right)\1_{\left\{|\bar{\theta}|\leq\frac{2\left(K_\nabla+2^pM_\iota  M_\Xi^p\right)}{\min\{\eta_1,\eta_2 a_\iota\}}\right\}} -  \eta_2b_\iota\nonumber\\
\geq&\ a|\bar{\theta}|^2-b,
\end{align*}
with $a:=\frac{\min\{\eta_1,\eta_2 a_\iota\}}{2}$ and $b:=\eta_2b_\iota + \frac{2\left(K_\nabla +2^pM_\iota  M_\Xi^p\right)^2}{\min\{\eta_1,\eta_2 a_\iota\}}$. This completes the proof.
\end{proof}

\subsection*{Proof of Proposition \ref{prop:excess_risk_discrete}}
%\label{proof:excess_risk_discrete}
\begin{proof}
Clearly, Assumption 1 of \cite{zhang2023nonasymptotic} holds due to  $\bar{\theta}_0\in L^4(\Omega, \mc{F},\mathbb{P};\R^{d+1})$ and the finiteness of the space $\Xi\cap\mathbb{K}_{\ell,\mathfrak{j}}^m$ which allows for exchange of order of differentiation and Lebesgue integration. Assumption 2 of \cite{zhang2023nonasymptotic} holds for the stochastic gradient $\nabla_{\bar{\theta}}\tilde{V}^{\delta.\ell,\mathfrak{j}}(\bar{\theta},x)$ due to Proposition \ref{prop:glob_lipschitz} and $\iota\cdot\iota'$ being Lipschitz continuous. Specifically, we have for all $\bar{\theta}_1=(\theta_1,\alpha_2),\bar{\theta}_2=(\theta_2,\alpha_2)\in\R^{d}\times \R$ and $x\in\Xi$,
\begin{align*}
&\ |\nabla_{\bar{\theta}}\tilde{V}^{\delta.\ell,\mathfrak{j}}(\bar{\theta}_1,x)-\nabla_{\bar{\theta}}\tilde{V}^{\delta.\ell,\mathfrak{j}}(\bar{\theta}_2,x)|\nonumber\\
\leq&\ |\nabla_{\bar{\theta}}V^{\delta.\ell,\mathfrak{j}}(\bar{\theta}_1,x)-\nabla_{\bar{\theta}}V^{\delta.\ell,\mathfrak{j}}(\bar{\theta}_2,x)| + \eta_1|\theta_1-\theta_2| + \eta_2|\iota(\alpha_1)\iota'(\alpha_1)-\iota(\alpha_2)\iota'(\alpha_2)|\nonumber\\
\leq&\ L_\delta |\bar{\theta}_1-\bar{\theta}_2| + \eta_1|\theta_1-\theta_2| + \eta_2\tilde{L}_\iota|\alpha_1-\alpha_2|\nonumber\\
\leq&\ (L_\delta+\eta_1+\eta_2\tilde{L}_\iota)|\bar{\theta}_1-\bar{\theta}_2|,
\end{align*}

and Assumption 2 of \cite{zhang2023nonasymptotic} with the correspondence of quantities between that in \cite{zhang2023nonasymptotic} and those in this paper being
\begin{align}
d\leftarrow d+1,\qquad \theta \leftarrow\ \bar{\theta},\qquad
H \leftarrow\ \nabla_{\bar{\theta}}\tilde{V}^{\delta.\ell,\mathfrak{j}},\qquad 
L_1 \leftarrow\ L_\delta+\eta_1+\eta_2\tilde{L}_\iota,\qquad
\eta(x) \leftarrow&\ 1,\label{eqn:assignments}
\end{align}

where the LHS of the above assignments are in the notation of \cite{zhang2023nonasymptotic}. Furthermore, Assumption 3 of \cite{zhang2023nonasymptotic} holds for the stochastic gradient $\nabla_{\bar{\theta}}\tilde{V}^{\delta,\ell,\mathfrak{j}}(\bar{\theta},x)$ due to Proposition \ref{prop:dissipativity}, and the constants $a,b$ in Remark 2.2 of \cite{zhang2023nonasymptotic} correspond exactly to the constants $a,b$ in Proposition \ref{prop:dissipativity} of this paper. One may obtain, from Equation (7) of \cite{zhang2023nonasymptotic}, the maximum step size restriction
\begin{align}
\lambda_{\max,\delta} = \min\left\{\frac{\mathfrak{C}_1}{\tilde{L}_\delta^2},\frac{1}{a}\right\}\label{defn:step}
\end{align}

for the algorithm, where the constants $a$, $\mathfrak{C}_1$  and $\tilde{L}_\delta := 1+L_\delta + \eta_1+\eta_2\tilde{L}_\iota$ are given explicitly as
\begin{align}\label{const:tilde_l_delta}
\begin{split}
\mathfrak{C}_1
:=&\ \frac{\min\{a,a^{1/3}\}}{64},
\qquad a:= \frac{\min\{\eta_1,\eta_2a_\iota\}}{2},
\qquad \tilde{L}_\delta:= \frac{\mathfrak{C}_2}{\delta} + \mathfrak{C}_3,\\
\mathfrak{C}_2
:=&\ (8K_\nabla +2^{p+3} M_\iota M_\Xi^p)(K_\nabla  + 2^{p}M_\Xi^pM_\iota),\\
\mathfrak{C}_3
:=&\ 2L_\nabla +2^{p+1} L_\iota M_\Xi^p + \eta_1+\eta_2\tilde{L}_\iota + 1.
\end{split}
\end{align}

(We note that the second condition in Assumption 2 of \cite{zhang2023nonasymptotic} was imposed by the authors to obtain sharper bounds for the Lipschitz constants. However, this second condition is not mandatory for the convergence bounds on the SGLD algorithm to hold, thus we do not verify it here.) Therefore, one may apply Corollary 2.8 of \cite{zhang2023nonasymptotic} with $\nabla_{\bar{\theta}}\tilde{V}^\delta(\bar{\theta},x)$ as the stochastic gradient to obtain constants $c_{\delta,\beta},C_{1,\delta,\ell,\mathfrak{j},\beta},C_{2,\delta,\ell,\mathfrak{j},\beta},C_{3,\delta,\beta}>0$ not depending on $n$ or $\lambda$ and with growth orders as specified in \eqref{eqn:constants_order} such that 
\begin{align}
\E_\mathbb{P}\left[v^{\delta,\ell,\mathfrak{j}}(\hat{\bar{\theta}}^{\lambda, \delta,\ell,\mathfrak{j}}_n)\right]-\inf_{\bar{\theta}\in\R^{d+1}} v^{\delta,\ell,\mathfrak{j}}(\bar{\theta}) \leq C_{1,\delta,\ell,\mathfrak{j},\beta} e^{-c_{\delta,\beta}\lambda n/4} + C_{2,\delta,\ell,\mathfrak{j},\beta} \lambda^{1/4} + C_{3,\delta,\beta},\label{eqn:sgld_bound}
\end{align}

where $v^{\delta,\ell,\mathfrak{j}}$ is defined as in \eqref{defn:small_v_delta_discrete}. Note that $c_{\delta,\beta},C_{1,\delta,\ell,\mathfrak{j},\beta},C_{2,\delta,\ell,\mathfrak{j},\beta},C_{3,\delta,\beta}>0$ correspond to $\dot{c}$, $C_1^\#$, $C_2^\#$, $C_3^\#$ of Corollary 2.8 of \cite{zhang2023nonasymptotic}, respectively, and that $c_{\delta,\beta},C_{3,\delta,\beta}$ do not depend on $\ell$ and $\mathfrak{j}$. This completes the proof.
\end{proof}

\subsection*{Proof of Corollary \ref{corollary:excess_risk_discrete}}
%\label{proof:corollary_excess_risk_discrete}
\begin{proof}
Applying the duality result in \eqref{eqn:dro_problem_dual_transformed_discrete}  twice yields
\begin{align*}
\E_{\mathbb{P}}[v^{\ell,\mathfrak{j}}(\hat{\bar{\theta}}^{\lambda, \delta, \ell,\mathfrak{j}}_n)]
=&\ \E_{\mathbb{P}}\lsrs{\left.\left(\int_{\Xi\cap\mathbb{K}_{\ell,\mathfrak{j}}^m}\tilde{V}^{\ell,\mathfrak{j}}(\bar{\theta}, x)\ \dee\mu_{0,\ell,\mathfrak{j}}(x)\right)\right|_{\bar{\theta}=\hat{\bar{\theta}}^{\lambda, \delta, \ell,\mathfrak{j}}_n=(\hat{{\theta}}^{\lambda, \delta, \ell,\mathfrak{j}}_n,\hat{\alpha}^{\lambda, \delta, \ell,\mathfrak{j}}_n)}}\nonumber\\
\geq &\ \E_{\mathbb{P}}\lsrs{\left.\left(\inf_{\alpha\in\R}\int_{\Xi\cap\mathbb{K}_{\ell,\mathfrak{j}}^m}\tilde{V}^{\ell,\mathfrak{j}}(\bar{\theta}, x)\ \dee\mu_{0,\ell,\mathfrak{j}}(x)\right)\right|_{\theta=\hat{\theta}^{\lambda, \delta, \ell,\mathfrak{j}}_n}}\nonumber\\
=&\ \E_{\mathbb{P}}[u^{\ell,\mathfrak{j}}(\hat{{\theta}}^{\lambda, \delta, \ell,\mathfrak{j}}_n)]
\geq\ \inf_{\theta\in\R^d}u^{\ell,\mathfrak{j}}(\theta)
=\ \inf_{\bar{\theta}\in\R^{d+1}}v^{\ell,\mathfrak{j}}(\bar{\theta})
=\ z_{D,\ell,\mathfrak{j}}.
\end{align*}
This, together with \eqref{eqn:sgld_bound} and \eqref{eqn:dro_problem_dual_transformed_discrete} as well as that $N=2^{ m(\ell+\mathfrak{j})}$ implies that
\begin{align*}
\E_{\mathbb{P}}[u^{\ell,\mathfrak{j}}(\hat{{\theta}}^{\lambda, \delta,\ell,\mathfrak{j}}_n)]-z_{P,\ell,\mathfrak{j}}
=&\ \E_{\mathbb{P}}[u^{\ell,\mathfrak{j}}(\hat{{\theta}}^{\lambda, \delta,\ell,\mathfrak{j}}_n)]-z_{D,\ell,\mathfrak{j}}\nonumber\\
\leq&\ \E_{\mathbb{P}}[v^{\ell,\mathfrak{j}}(\hat{\bar{\theta}}^{\lambda, \delta,\ell,\mathfrak{j}}_n)] -z_{D,\ell,\mathfrak{j}}\nonumber\\
\leq&\ \E_{\mathbb{P}}[v^{\delta,\ell,\mathfrak{j}}(\hat{\bar{\theta}}^{\lambda, \delta,\ell,\mathfrak{j}}_n)] -  z_{D,\ell,\mathfrak{j},\delta}+ \delta \log{N}\nonumber\\
\leq&\ C_{1,\delta,\ell,\mathfrak{j},\beta} e^{-c_{\delta,\beta}\lambda n/4} + C_{2,\delta,\ell,\mathfrak{j},\beta} \lambda^{1/4} + C_{3,\delta,\beta} + \delta  m(\ell+\mathfrak{j})\log{2},
\end{align*}
where the second inequality is due to
\begin{align*}
v^{\delta,\ell,\mathfrak{j}}(\bar{\theta}) \leq v^{\ell,\mathfrak{j}}(\bar{\theta}) \leq v^{\delta,\ell,\mathfrak{j}}(\bar{\theta}) + \delta\log{N},\qquad \bar{\theta}\in\R^{d+1},
\end{align*}
which follows from the definitions of $v^{\ell,\mathfrak{j}}$ and $v^{\delta,\ell,\mathfrak{j}}$ in \eqref{defn:small_v_discrete} and \eqref{defn:small_v_delta_discrete}, as well as the smoothing error given in Lemma \ref{lemma:smooth_max}. This completes the proof.
\end{proof}

\subsection*{Proof of Proposition \ref{prop:discretisation}}
%\label{proof:discretisation}
To obtain the desired result, we first derive an upper estimate for $|u(\theta)-u^{\ell,\mathfrak{j}}(\theta)|$, for all $\theta\in\mathbb{R}^d$. Then, we use 
\[
\lbrb{\E_{\mathbb{P}}\left[u(\hat{\theta}_{n}^{\lambda,\delta,\ell,\mathfrak{j}})\right]-\E_{\mathbb{P}}\left[u^{\ell,\mathfrak{j}}(\hat{\theta}_{n}^{\lambda,\delta,\ell,\mathfrak{j}})\right]}\leq \E_{\mathbb{P}}\left[\lbrb{u(\hat{\theta}_{n}^{\lambda,\delta,\ell,\mathfrak{j}})-u^{\ell,\mathfrak{j}}(\hat{\theta}_{n}^{\lambda,\delta,\ell,\mathfrak{j}})}\right]
\]
together with the moment estimate for $\hat{\theta}_{n}^{\lambda,\delta,\ell,\mathfrak{j}}$ in Lemma 4.2 of \cite{zhang2023nonasymptotic} to conclude the proof.\\

Fix $\theta\in\R^d$. By the definition of $u^{\ell,\mathfrak{j}}$ in \eqref{defn:u_discrete}, the duality result in \eqref{eqn:duality_theorem} due to Theorem \ref{theorem:wass_duality}, and by the proof of Lemma \ref{lemma:finite_grid}, one obtains the relation
\begin{align}
u^{\ell,\mathfrak{j}}(\theta)
=&\ \inf_{\alpha\in\R}\left\{\int_{\Xi\cap \mathbb{K}_{\ell,\mathfrak{j}}^m} \max_{y\in \Xi\cap\mathbb{K}_{\ell,\mathfrak{j}}^m}\{U(\theta,y)-\iota(\alpha)|x-y|^p\} \dee \mu_{0,\ell,\mathfrak{j}}(x) + \frac{\eta_1}{2}|\theta|^2+\frac{\eta_2}{2}|\iota(\alpha)|^2\right\} \nonumber\\
=&\ \inf_{\mathfrak{a}\geq 0}\left\{\int_{\Xi\cap\mathbb{K}_{\ell,\mathfrak{j}}^m} \max_{y\in \Xi\cap\mathbb{K}_{\ell,\mathfrak{j}}^m}\{U(\theta,y)-\mathfrak{a}|x-y|^p\} \dee \mu_{0,\ell,\mathfrak{j}}(x) + \frac{\eta_1}{2}|\theta|^2+\frac{\eta_2}{2}|\mathfrak{a}|^2\right\} \nonumber\\
=&\ \inf_{\mathfrak{a}\geq 0}\left\{\int_{\Xi}\sup_{y\in\Xi}\left\{U(\theta,[y]_\mathfrak{j})-\mathfrak{a} |[x]_\mathfrak{j}-[y]_\mathfrak{j}|^p\right\}\ \dee\mu_0(x)+\frac{\eta_1}{2}|\theta|^{2}+\frac{\eta_2}{2}|\mathfrak{a}|^2\right\}.\label{eqn:u_discretised_exp}
\end{align}
Similarly, by the duality result of Theorem \ref{theorem:wass_duality}, the relation
\begin{align}
u(\theta)
=&\ \inf_{\mathfrak{a}\geq 0}\left\{\int_{\Xi}\sup_{y\in\Xi}\left\{U(\theta,y)-\mathfrak{a} |x-y|^p\right\}\ \dee\mu_0(x)+\frac{\eta_1}{2}|\theta|^{2}+\frac{\eta_2}{2}|\mathfrak{a}|^2\right\}\label{eqn:u_exp}
\end{align}
holds. Observe that, by following the exact same argument in \eqref{eqn:start_arg_1} from the proof of Lemma \ref{lemma:compactness} , one obtains the following lower bound
\begin{align}
\min\Bigg{\{}&\ \int_{\Xi}\sup_{y\in\Xi}\left\{U(\theta,y)-\mathfrak{a} |x-y|^p\right\}\ \dee\mu_0(x)+\frac{\eta_1}{2}|\theta|^{2}+\frac{\eta_2}{2}|\mathfrak{a}|^2,\nonumber\\
&\ \int_{\Xi}\sup_{y\in\Xi}\left\{U(\theta,[y]_\mathfrak{j})-\mathfrak{a} |[x]_\mathfrak{j}-[y]_\mathfrak{j}|^p\right\}\ \dee\mu_0(x)+\frac{\eta_1}{2}|\theta|^{2}+\frac{\eta_2}{2}|\mathfrak{a}|^2\Bigg{\}}\nonumber\\
&\ \hspace{-50pt} \geq -\tilde{K}_\nabla (1+|\theta|)-\mathfrak{a}2^pM_\Xi^p + \frac{\eta_1}{2}|\theta|^2+\frac{\eta_2}{2}|\mathfrak{a}|^2\label{eqn:u_lower_bound}
\end{align}
uniformly in $\mathfrak{j}$. Denote by $\mathfrak{K}_\theta$ the quantity
\begin{align*}
\mathfrak{K}_\theta :=&\ \frac{2}{\sqrt{\eta}_2} \left(1+\sup_{x\in\Xi}|U(\theta,x)|+\tilde{K}_\nabla (1+|\theta|)\right) + \frac{2^{p+2}M_\Xi^p}{\eta_2}.
\end{align*}

Then, for all $\mathfrak{a}>\mathfrak{K}_\theta$, we obtain the inequality
\begin{align*}
&\ -\tilde{K}_\nabla (1+|\theta|)-\mathfrak{a}2^pM_\Xi + \frac{\eta_1}{2}|\theta|^2+\frac{\eta_2}{2}|\mathfrak{a}|^2\nonumber\\
\geq&\ -\tilde{K}_\nabla (1+|\theta|)-\mathfrak{a}2^pM_\Xi^p\cdot \frac{\mathfrak{a}\eta_2}{2^{p+2}M_\Xi^p} + \frac{\eta_1}{2}|\theta|^2+\frac{\eta_2}{2}|\mathfrak{a}|^2\nonumber\\
%=&\ -\tilde{K}_\nabla (1+|\theta|) + \frac{\eta_1}{2}|\theta|^2+\frac{\eta_2}{4}|\mathfrak{a}|^2\nonumber\\
>&\ -\tilde{K}_\nabla (1+|\theta|) + \frac{\eta_1}{2}|\theta|^2+\frac{\eta_2}{4}|\mathfrak{K}_\theta|^2\nonumber\\
>&\ -\tilde{K}_\nabla (1+|\theta|) + \frac{\eta_1}{2}|\theta|^2+\frac{\eta_2}{4}\left|\frac{2}{\sqrt{\eta}_2} \left(1+\sup_{x\in\Xi}|U(\theta,x)|+\tilde{K}_\nabla (1+|\theta|)\right)\right|^2\nonumber\\
%>&\ -\tilde{K}_\nabla (1+|\theta|) + \frac{\eta_1}{2}|\theta|^2+\left(1+\sup_{x\in\Xi}|U(\theta,x)|+\tilde{K}_\nabla (1+|\theta|)\right)\nonumber\\
>&\ 1+ \sup_{x\in\Xi}|U(\theta,x)| + \frac{\eta_1}{2}|\theta|^2\nonumber\\
>&\ \max\{u(\theta),u^{\ell,\mathfrak{j}}(\theta)\}.
\end{align*}

This, in particular, implies from \eqref{eqn:u_lower_bound} that for all $\mathfrak{a}>\mathfrak{K}_\theta$,
\begin{align*}
\int_{\Xi}\sup_{y\in\Xi}\left\{U(\theta,y)-\mathfrak{a} |x-y|^p\right\}\ \dee\mu_0(x)+\frac{\eta_1}{2}|\theta|^{2}+\frac{\eta_2}{2}|\mathfrak{a}|^2 >&\ %1+\sup_{x\in\Xi}|U(\theta,x)| + \frac{\eta_1}{2}|\theta|^2 > 
u(\theta),\nonumber\\
\int_{\Xi}\sup_{y\in\Xi}\left\{U(\theta,[y]_\mathfrak{j})-\mathfrak{a} |[x]_\mathfrak{j}-[y]_\mathfrak{j}|^p\right\}\ \dee\mu_0(x)+\frac{\eta_1}{2}|\theta|^{2}+\frac{\eta_2}{2}|\mathfrak{a}|^2 >
&\ %1+\sup_{x\in\Xi}|U(\theta,x)| + \frac{\eta_1}{2}|\theta|^2 >
 u^{\ell,\mathfrak{j}}(\theta).
\end{align*}

Therefore, by the same argument in \eqref{eqn:start_arg_1} to \eqref{eqn:end_arg_1} from the proof of Lemma \ref{lemma:compactness}, the infimum in \eqref{eqn:u_discretised_exp} and \eqref{eqn:u_exp} are both attained in $[0,\mathfrak{K}_\theta]$. It follows by applying the same argument in the proof of Proposition \ref{prop:quadrature} that
\begin{align}
|u(\theta)-u^{\ell,\mathfrak{j}}(\theta)|
\leq&\ \sup_{\mathfrak{a}\in[0,\mathfrak{K}_\theta]}\int_\Xi\sup_{y\in\Xi}\left|U(\theta,y)-\mathfrak{a}|x-y|^p-U(\theta,[y]_j)+\mathfrak{a}|[x]_j-[y]_j|^p\right|\ \dee \mu_0(x)\nonumber\\
\leq&\ J_U(1+|\theta|)  \frac{\sqrt{m}}{2^j}+p\mathfrak{K}_\theta(1+4M_\Xi)^{p-1}\frac{2\sqrt{m}}{2^j}\nonumber\\
=&\ J_U(1+|\theta|)  \frac{\sqrt{m}}{2^j}+
	p(1+4M_\Xi)^{p-1}\frac{2^{p+3}M_\Xi^p\sqrt{m}}{\eta_2 2^j} \nonumber\\
&\ +p\left(1+\sup_{x\in\Xi}|U(\theta,x)|+\tilde{K}_\nabla (1+|\theta|)\right)(1+4M_\Xi)^{p-1}\frac{4\sqrt{m}}{\sqrt{\eta_2}2^j}\nonumber\\
\leq&\ J_U(1+|\theta|)\frac{\sqrt{m}}{2^j}+
	p(1+4M_\Xi)^{p-1}\frac{2^{p+3}M_\Xi^p\sqrt{m}}{\eta_2 2^j} \nonumber\\
&\ +p\left(1+2\tilde{K}_\nabla (1+|\theta|)\right)(1+4M_\Xi)^{p-1}\frac{4\sqrt{m}}{\sqrt{\eta_2}2^j}\nonumber\\
\leq&\ \frac{\sqrt{m}}{2^j}\Bigg{[}J_U +\frac{4p}{\sqrt{\eta_2}}(1+4M_\Xi)^{p-1}(1+2\tilde{K}_\nabla)\nonumber\\
&\ + 
	\frac{2^{p+3}pM_\Xi^p}{\eta_2}(1+4M_\Xi)^{p-1}+\left(J_U +\frac{8p\tilde{K}_\nabla}{\sqrt{\eta_2}}(1+4M_\Xi)^{p-1}\right)|\theta|\Bigg{]}.\label{eqn:primal_diff_fixed_theta}
\end{align}

Next, we proceed to obtain a moment estimate for $\hat{\theta}_{n}^{\lambda,\delta,\ell,\mathfrak{j}}$. An application of Lemma 4.2 of \cite{zhang2023nonasymptotic} yields, for $\lambda\in(0,\lambda_{\max,\delta})$ where $\lambda_{\max,\delta}$ is as defined in \eqref{defn:step}, the second moment bound
\begin{align*}
&\ \E_\mathbb{P}\left[|\hat{\bar{\theta}}_{n}^{\lambda,\delta,\ell,\mathfrak{j}}|^2\right]\nonumber\\
\leq&\ e^{-a\lambda(n+1)}\E_\mathbb{P}\left[|\hat{\bar{\theta}}_{0}|^2\right]+\left(2\lambda_{\max,\delta}\sup_{x\in\Xi}|\nabla_{\bar{\theta}}\tilde{V}^{\delta,\ell,\mathfrak{j}}(0,x)|^2 + 2b + 2(d+1)/\beta\right)(\lambda_{\max,\delta}+a^{-1}).
\end{align*}
Note that by the growth condition of \eqref{eqn:grad_v_growth}, it holds that
\begin{align*}
\sup_{x\in\Xi}|\nabla_{\bar{\theta}}\tilde{V}^{\delta,\ell,\mathfrak{j}}(0,x)|^2
\leq&\ \left(K_\nabla + 2^p M_\iota M_\Xi^p \right)^2,\nonumber
\end{align*}
so that 
\begin{align}
\E_\mathbb{P}\left[|\hat{\bar{\theta}}_{n}^{\lambda,\delta,\ell,\mathfrak{j}}|^2\right]
\leq&\ e^{-a\lambda(n+1)}\E_\mathbb{P}\left[|\hat{\bar{\theta}}_{0}|^2\right]+\mathfrak{c}_{1,\delta,\beta}(\lambda_{\max,\delta}+a^{-1}),\label{eqn:moment_bound}
\end{align}

where
\begin{align*}
\mathfrak{c}_{1,\delta,\beta}
:=\ 2\mathfrak{M}_1\lambda_{\max,\delta}+2b+2(d+1)/\beta,\qquad
\mathfrak{M}_1
:=\ \left(K_\nabla  + 2^p M_\iota M_\Xi^p\right)^2.
\end{align*}
Therefore, substituting \eqref{eqn:moment_bound} into \eqref{eqn:primal_diff_fixed_theta} yields
\begin{align*}
&\ \lbrb{\E_{\mathbb{P}}\left[u(\hat{\theta}_{n}^{\lambda,\delta,\ell,\mathfrak{j}})\right]-\E_{\mathbb{P}}\left[u^{\ell,\mathfrak{j}}(\hat{\theta}_{n}^{\lambda,\delta,\ell,\mathfrak{j}})\right]}\nonumber\\
\leq&\ \E_{\mathbb{P}}\left[\lbrb{u(\hat{\theta}_{n}^{\lambda,\delta,\ell,\mathfrak{j}})-u^{\ell,\mathfrak{j}}(\hat{\theta}_{n}^{\lambda,\delta,\ell,\mathfrak{j}})}\right]\nonumber\\
\leq&\ \frac{\sqrt{m}}{2^\mathfrak{j}}\Bigg{[}J_U +\frac{4p}{\sqrt{\eta_2}}(1+4M_\Xi)^{p-1}(1+2\tilde{K}_\nabla )\nonumber\\
&\ + \frac{2^{p+3}pM_\Xi^p}{\eta_2}(1+4M_\Xi)^{p-1}+\left(J_U +\frac{8p\tilde{K}_\nabla}{\sqrt{\eta_2}}(1+M_\Xi)^{p-1}\right)\E_{\mathbb{P}}\left(\left[|\hat{\bar{\theta}}_{n}^{\lambda,\delta,\ell,\mathfrak{j}}|^2\right]\right)^{1/2}\Bigg{]}\nonumber\\
\leq&\ \frac{\sqrt{m}}{2^j}\Bigg{[}J_U +\frac{4p}{\sqrt{\eta_2}}(1+4M_\Xi)^{p-1}(1+2\tilde{K}_\nabla )\nonumber\\
&\ + \frac{2^{p+3}pM_\Xi^p}{\eta_2}(1+4M_\Xi)^{p-1}+\left(J_U +\frac{8p\tilde{K}_\nabla}{\sqrt{\eta_2}}(1+4M_\Xi)^{p-1}\right)\mathfrak{c}_{1,\delta,\beta}^{1/2}(\lambda_{\max,\delta}+a^{-1})^{1/2}\nonumber\\
&\ + \left(J_U+\frac{8p\tilde{K}_\nabla}{\sqrt{\eta_2}}(1+4M_\Xi)^{p-1}\right)e^{-a\lambda(n+1)/2}\left(\E_\mathbb{P}\left[|\hat{\bar{\theta}}_{0}|^2\right]\right)^{1/2}\Bigg{]}\nonumber\\
=&\ \frac{\sqrt{m}(\tilde{C}_4 + C_{5,\delta,\beta} + C_{6}e^{-a\lambda(n+1)/2})}{2^\mathfrak{j}},
\end{align*}
where
\begin{align}\label{const:c4_c5_c6}
\begin{split}
\tilde{C}_4 :=&\ J_U+\frac{4p}{\sqrt{\eta_2}}(1+4M_\Xi)^{p-1}(1+2\tilde{K}_\nabla)+\frac{2^{p+3}pM_\Xi^p}{\eta_2}(1+4M_\Xi)^{p-1},\\
C_{5,\delta,\beta} :=&\ \mathfrak{C}_4\mathfrak{c}_{1,\delta,\beta}^{1/2}(\lambda_{\max,\delta}+a^{-1})^{1/2},\\
C_6 :=&\ \mathfrak{C}_4\left(\E_\mathbb{P}\left[|\hat{\bar{\theta}}_{0}|^2\right]\right)^{1/2},\\
\mathfrak{C}_4 :=&\ \left(J_U+\frac{8p\tilde{K}_\nabla}{\sqrt{\eta_2}}(1+4M_\Xi)^{p-1}\right),\\
\mathfrak{c}_{1,\delta,\beta}
:=&\ 2\mathfrak{M}_1\lambda_{\max,\delta}+2b+2(d+1)/\beta,\\
\mathfrak{M}_1
:=&\ \left(K_\nabla + 2^p M_\iota M_\Xi^p \right)^2.
\end{split}
\end{align}
This completes the proof.
%
%
%%%%%%%%%%%%%%%%%%%%%%%%%%%%
\vspace{1cm}
\section*{Acknowledgements}
Financial support by the MOE AcRF Tier 2 Grant MOE-T2EP20222-0013 and the Guangzhou-HKUST(GZ) Joint Funding Programs (No. 2024A03J0630 and No. 2025A03J3322) is gratefully acknowledged.
%\newpage

%%%%%%%%%%%%%%%%%%%%%%%%%%%%%%%%%%%%%%%5
% NO NEED FOR ARXIV VERSION
%\vspace{1.0cm}
%\section*{Declarations}
%\noindent 
%\textbf{Conflict of interest:} The authors have no conflicts of interest to declare that are relevant to the content of
%this article.
%%%%%%%%%%%%%%%%%%%%%%%%%%%%%%%%%%%%%%%%%%%%%

\newpage

\appendix
\section{Dependence on Key Parameters of Constants}
\label{appendix:A}
\begin{tabularx}{\textwidth}[H]
{ 
	>{\hsize=0.6\hsize\linewidth=\hsize}X
	>{\hsize=0.3\hsize\linewidth=\hsize}X|
	>{\hsize=2.1\hsize\linewidth=\hsize}X
} 
{Constant} & & Dependence on Key Parameters\\
\hline
Proposition \ref{prop:glob_lipschitz} &$L_\delta$ &$\mathcal{O}\left(M_\Xi^{2p}(1+1/\delta)\right)$\\
\hline
Proposition \ref{prop:dissipativity}  & $a$ & --- \\
 & $b$ & $\mathcal{O}\left(M_\Xi^{2p}\right)$\\
\hline

Proposition \ref{prop:excess_risk_discrete}\newline Theorem \ref{theorem:excess risk}  & $\mathfrak{C}_1$ &  ---\\
 & $\mathfrak{C}_2$ & $\mathcal{O}\left(M_\Xi^{2p}\right)$\\
 & $\mathfrak{C}_3$ & $\mathcal{O}\left(M_\Xi^{p}\right)$\\
 & $\tilde{L}_\delta$ & $\mathcal{O}\left(M_\Xi^{2p}(1+1/\delta)\right)$\\
  & $\lambda_{\max,\delta}$ & $\varOmega\left(\frac{1}{M_\Xi^{2p}(1+1/\delta)}\right)$\\
 & $c_{\delta,\beta}$ & $\varOmega\left(\frac{1}{e^{C_\star(1+1/\delta)(\beta+d)M_\Xi^{p}}}\right)$\\
  & $C_{1,\delta,\ell,\mathfrak{j},\beta}$ & $\mathcal{O}\left(e^{C_\star(1+1/\delta)(\beta+d)M_\Xi^{p}}\right)$\\
   & $C_{2,\delta,\ell,\mathfrak{j},\beta}$ & $\mathcal{O}\left(e^{C_\star(1+1/\delta)(\beta+d)M_\Xi^{p}}\right)$\\
    & $C_{3,\delta,\beta}$ & $\mathcal{O}\left((d/\beta)\log\left(C_\star(1+1/\delta)(\beta/d+1)M_\Xi^{p}\right)\right)$\\
\hline

Lemma \ref{lemma:compactness}\newline 
Proposition \ref{prop:discretisation}\newline 
Theorem \ref{theorem:excess risk}   & $R_\mc{K}$ &  $\mathcal{O}\left(M_\Xi^{p}\right)$\\

& $\mathfrak{M}_1$ &  $\mathcal{O}\left(M_\Xi^{2p}\right)$\\

 & $\mathfrak{c}_{1,\delta,\beta}$ &  $\mathcal{O}\left(M_\Xi^{2p}(1+d/\beta)\right)$ \\
 
 & $\mathfrak{C}_4$ & $\mathcal{O}\left(M_\Xi^{p-1}\right)$ \\

 & $\tilde{C}_4 $ & $\mathcal{O}\left(M_\Xi^{2p-1}\right)$ \\
 
  & $C_4 $ & $\mathcal{O}\left(M_\Xi^{2p-1}\right)$ \\
 
 & $C_{5,\delta,\beta}$ & $\mathcal{O}\left(M_\Xi^{2p-1}\sqrt{(1+d/\beta)}\right)$\\
 
 & $C_6$ & $\mathcal{O}\left(M_\Xi^{p-1}\right)$\\
\end{tabularx}

\newpage

\section{Analytic Expression of Constants}
\label{appendix:B}
\begin{tabularx}{\textwidth}[H]
{ 
	>{\hsize=0.5\hsize\linewidth=\hsize}X
	>{\hsize=0.3\hsize\linewidth=\hsize}X|
	>{\hsize=2.2\hsize\linewidth=\hsize}X
} 
{Constant} & & Explicit Expression\\
\hline
Proposition \ref{prop:glob_lipschitz} & $L_\delta$ & $ \tfrac{8K_\nabla \left(K_\nabla + 2^{p} M_\Xi^p M_\iota\right)}{\delta} +2L_\nabla  +2^{p+1} L_\iota M_\Xi^p+ \tfrac{2^{p+3} M_\iota M_\Xi^p \left(K_\nabla +2^{p}M_\Xi^pM_\iota\right)}{\delta}$\\
\hline
Proposition \ref{prop:dissipativity}  & $a$ & $\frac{\min\{\eta_1,\eta_2 a_\iota\}}{2}$ \\
 & $b$ & $\eta_2b_\iota + \frac{2\left(K_\nabla +2^pM_\iota  M_\Xi^p\right)^2}{\min\{\eta_1,\eta_2 a_\iota\}}$\\
\hline

Proposition \ref{prop:excess_risk_discrete}\newline Theorem \ref{theorem:excess risk}  & $\mathfrak{C}_1$ &  $\frac{\min\{a,a^{1/3}\}}{64}$\\
 & $\mathfrak{C}_2$ & $(8K_\nabla +2^{p+3} M_\iota M_\Xi^p)(K_\nabla  + 2^{p}M_\Xi^pM_\iota)$\\
 & $\mathfrak{C}_3$ & $2L_\nabla +2^{p+1} L_\iota M_\Xi^p + \eta_1+\eta_2\tilde{L}_\iota + 1$\\
 & $\tilde{L}_\delta$ & $ \frac{\mathfrak{C}_2}{\delta} + \mathfrak{C}_3$\\
  & $\lambda_{\max,\delta}$ & $\min\left\{\frac{\mathfrak{C}_1}{\tilde{L}_\delta^2},\frac{1}{a}\right\}$\\
 & $c_{\delta,\beta}$ & See \eqref{eqn:assignments} and the explicit expression for $\dot{c}$ in Corollary 2.8 of \cite{zhang2023nonasymptotic}.\\
  & $C_{1,\delta,\ell,\mathfrak{j},\beta}$ & See \eqref{eqn:assignments} and the explicit expression for $C_1^\#$ in Corollary 2.8 of \cite{zhang2023nonasymptotic}.\\
   & $C_{2,\delta,\ell,\mathfrak{j},\beta}$ & See \eqref{eqn:assignments} and the explicit expression for $C_2^\#$ in Corollary 2.8 of \cite{zhang2023nonasymptotic}.\\
    & $C_{3,\delta,\beta}$ & See \eqref{eqn:assignments} and the explicit expression for $C_3^\#$ in Corollary 2.8 of \cite{zhang2023nonasymptotic}.\\
\hline

Lemma \ref{lemma:compactness}\newline 
Proposition \ref{prop:discretisation}\newline 
Theorem \ref{theorem:excess risk}  & $R_\mc{K}$ &  $\left(\left(3\tilde{K}_\nabla + \frac{(\tilde{K}_\nabla+2^pM_\Xi^p)^2}{\min\{\eta_1, \eta_2\}}\right)\frac{4}{\min\{\eta_1, \eta_2\}}\right)^{1/2}$\\

 & $\mathfrak{M}_1$ &  $\left(K_\nabla + 2^p M_\iota M_\Xi^p \right)^2$\\

 & $\mathfrak{c}_{1,\delta,\beta}$ &  $2\mathfrak{M}_1\lambda_{\max,\delta}+2b+2(d+1)/\beta$ \\
 
 & $\mathfrak{C}_4$ & $J_U+\frac{8p\tilde{K}_\nabla}{\sqrt{\eta_2}}(1+4M_\Xi)^{p-1}$ \\

 & $\tilde{C}_4 $ & $J_U+\frac{4p}{\sqrt{\eta_2}}(1+4M_\Xi)^{p-1}(1+2\tilde{K}_\nabla)+\frac{2^{p+3}pM_\Xi^p}{\eta_2}(1+4M_\Xi)^{p-1}$ \\
 
  & $C_4 $ & $\tilde{C}_4 + (J_U+ 2p(1+4M_\Xi)^{p-1})(1+R_\mc{K})$ \\
 
 & $C_{5,\delta,\beta}$ & $\mathfrak{C}_4\mathfrak{c}_{1,\delta,\beta}^{1/2}(\lambda_{\max,\delta}+a^{-1})^{1/2}$\\
 
 & $C_6$ & $\mathfrak{C}_4\left(\E_\mathbb{P}\left[|\hat{\bar{\theta}}_{0}|^2\right]\right)^{1/2}$\\
\end{tabularx}
\newpage

\bibliographystyle{plainnat}
\bibliographystyle{alpha}
\bibliography{references}

\begin{thebibliography}{60}
\providecommand{\natexlab}[1]{#1}
\providecommand{\url}[1]{\texttt{#1}}
\expandafter\ifx\csname urlstyle\endcsname\relax
  \providecommand{\doi}[1]{doi: #1}\else
  \providecommand{\doi}{doi: \begingroup \urlstyle{rm}\Url}\fi

\bibitem[An et~al.(2022)An, Lei, Xu, and Gu]{an2022efficient}
Dongsheng An, Na~Lei, Xiaoyin Xu, and Xianfeng Gu.
\newblock Efficient optimal transport algorithm by accelerated gradient
  descent.
\newblock In \emph{Proceedings of the AAAI Conference on Artificial
  Intelligence}, volume~36, pages 10119--10128, 2022.

\bibitem[Aolaritei et~al.(2023)Aolaritei, Lanzetti, Chen, and
  D{\"o}rfler]{aolaritei2023distributional}
Liviu Aolaritei, Nicolas Lanzetti, Hongruyu Chen, and Florian D{\"o}rfler.
\newblock Distributional uncertainty propagation via optimal transport.
\newblock \emph{arXiv preprint arXiv:2205.00343}, 2023.

\bibitem[Barkhagen et~al.(2021)Barkhagen, Chau, Moulines, R{\'a}sonyi, Sabanis,
  and Zhang]{convex}
Mathias Barkhagen, Ngoc~Huy Chau, {\'E}ric Moulines, Mikl{\'o}s R{\'a}sonyi,
  Sotirios Sabanis, and Ying Zhang.
\newblock {On stochastic gradient {L}angevin dynamics with dependent data
  streams in the logconcave case}.
\newblock \emph{Bernoulli}, 27\penalty0 (1):\penalty0 1--33, 2021.

\bibitem[Bartl et~al.(2017)Bartl, Drapeau, and Tangpi]{bartl2017computational}
Daniel Bartl, Samuel Drapeau, and Ludovic Tangpi.
\newblock Computational aspects of robust optimized certainty equivalents.
\newblock \emph{Preprint. Available at https://arxiv. org/abs/1706.10186 v1},
  2017.

\bibitem[Bartl et~al.(2021)Bartl, Drapeau, Ob{\l}{\'o}j, and
  Wiesel]{bartl2021sensitivity}
Daniel Bartl, Samuel Drapeau, Jan Ob{\l}{\'o}j, and Johannes Wiesel.
\newblock Sensitivity analysis of {W}asserstein distributionally robust
  optimization problems.
\newblock \emph{Proc. R. Soc. A}, 477\penalty0 (2256):\penalty0 20210176, 2021.

\bibitem[Bartl et~al.(2023)Bartl, Neufeld, and
  Park]{bartlneufeldpark2023sensitivity}
Daniel Bartl, Ariel Neufeld, and Kyunghyun Park.
\newblock Sensitivity of robust optimization problems under drift and
  volatility uncertainty.
\newblock \emph{arXiv preprint arXiv:2311.11248}, 2023.

\bibitem[Bartl et~al.(2024)Bartl, Neufeld, and Park]{bartl2024numerical}
Daniel Bartl, Ariel Neufeld, and Kyunghyun Park.
\newblock {Numerical method for nonlinear Kolmogorov PDEs via sensitivity
  analysis}.
\newblock \emph{arXiv preprint arXiv:2403.11910}, 2024.

\bibitem[Bayraktar and Chen(2024)]{bayraktar2024data}
Erhan Bayraktar and Tao Chen.
\newblock Data-driven non-parametric robust control under dependence
  uncertainty.
\newblock In \emph{Peter Carr Gedenkschrift: Research Advances in Mathematical
  Finance}, pages 141--178. World Scientific, 2024.

\bibitem[Blanchet and Murthy(2019)]{blanchet2019quantifying}
Jose Blanchet and Karthyek Murthy.
\newblock Quantifying distributional model risk via optimal transport.
\newblock \emph{Mathematics of Operations Research}, 44\penalty0 (2):\penalty0
  565--600, 2019.

\bibitem[Bras(2023)]{bras2023langevin}
Pierre Bras.
\newblock Langevin algorithms for very deep neural networks with application to
  image classification.
\newblock \emph{Procedia Computer Science}, 222:\penalty0 303--310, 2023.

\bibitem[Bras and Pag{\`e}s(2023)]{bras2023langevincontrol}
Pierre Bras and Gilles Pag{\`e}s.
\newblock Langevin algorithms for markovian neural networks and deep stochastic
  control.
\newblock In \emph{2023 International Joint Conference on Neural Networks
  (IJCNN)}, pages 1--8. IEEE, 2023.

\bibitem[Brosse et~al.(2018)Brosse, Durmus, and Moulines]{ppbdm}
Nicolas Brosse, Alain Durmus, and Eric Moulines.
\newblock {T}he promises and pitfalls of stochastic gradient {L}angevin
  dynamics.
\newblock In \emph{Advances in Neural Information Processing Systems}, pages
  8268--8278, 2018.

\bibitem[Chau et~al.(2021)Chau, Moulines, R{\'a}sonyi, Sabanis, and
  Zhang]{chau2021stochastic}
Ngoc~Huy Chau, {\'E}ric Moulines, Miklos R{\'a}sonyi, Sotirios Sabanis, and
  Ying Zhang.
\newblock On stochastic gradient langevin dynamics with dependent data streams:
  The fully nonconvex case.
\newblock \emph{SIAM Journal on Mathematics of Data Science}, 3\penalty0
  (3):\penalty0 959--986, 2021.

\bibitem[Chen et~al.(2022)Chen, Ma, Natarajan, Simchi-Levi, and
  Yan]{chen2022distributionally}
Louis Chen, Will Ma, Karthik Natarajan, David Simchi-Levi, and Zhenzhen Yan.
\newblock Distributionally robust linear and discrete optimization with
  marginals.
\newblock \emph{Operations Research}, 70\penalty0 (3):\penalty0 1822--1834,
  2022.

\bibitem[Chen and Paschalidis(2018)]{chen2018robust}
Ruidi Chen and Ioannis~Ch Paschalidis.
\newblock A robust learning approach for regression models based on
  distributionally robust optimization.
\newblock \emph{Journal of Machine Learning Research}, 19\penalty0
  (13):\penalty0 1--48, 2018.

\bibitem[Chen et~al.(2020)Chen, Du, and Tong]{chen2020stationary}
Xi~Chen, Simon~S Du, and Xin~T Tong.
\newblock On stationary-point hitting time and ergodicity of stochastic
  gradient {L}angevin dynamics.
\newblock \emph{Journal of Machine Learning Research}, 2020.

\bibitem[Chu and Tangpi(2021)]{chu2021non}
Jiarui Chu and Ludovic Tangpi.
\newblock Non-asymptotic estimation of risk measures using stochastic gradient
  {L}angevin dynamics.
\newblock \emph{arXiv preprint arXiv:2111.12248}, 2021.

\bibitem[Dalalyan(2017)]{pmlr-v65-dalalyan17a}
Arnak~S Dalalyan.
\newblock {F}urther and stronger analogy between sampling and optimization:
  {L}angevin {M}onte {C}arlo and gradient descent.
\newblock In \emph{Proceedings of the 2017 Conference on Learning Theory},
  volume~65 of \emph{Proceedings of Machine Learning Research}, pages 678--689.
  PMLR, 07--10 Jul 2017.

\bibitem[Dalalyan and Karagulyan(2019)]{dalalyan2019user}
Arnak~S Dalalyan and Avetik Karagulyan.
\newblock User-friendly guarantees for the {L}angevin monte carlo with
  inaccurate gradient.
\newblock \emph{Stochastic Processes and their Applications}, 129\penalty0
  (12):\penalty0 5278--5311, 2019.

\bibitem[Delage and Ye(2010)]{delage2010distributionally}
Erick Delage and Yinyu Ye.
\newblock Distributionally robust optimization under moment uncertainty with
  application to data-driven problems.
\newblock \emph{Operations research}, 58\penalty0 (3):\penalty0 595--612, 2010.

\bibitem[Deng et~al.(2022)Deng, Lin, and Liang]{deng2022adaptively}
Wei Deng, Guang Lin, and Faming Liang.
\newblock An adaptively weighted stochastic gradient mcmc algorithm for monte
  carlo simulation and global optimization.
\newblock \emph{Statistics and Computing}, 32\penalty0 (4):\penalty0 58, 2022.

\bibitem[Durmus et~al.(2019)Durmus, Majewski, and
  Miasojedow]{durmus2019analysis}
Alain Durmus, Szymon Majewski, and B{\l}a{\.z}ej Miasojedow.
\newblock {A}nalysis of {L}angevin {M}onte {C}arlo via convex optimization.
\newblock \emph{The Journal of Machine Learning Research}, 20\penalty0
  (1):\penalty0 2666--2711, 2019.

\bibitem[Eckstein et~al.(2020)Eckstein, Kupper, and Pohl]{eckstein2020robust}
Stephan Eckstein, Michael Kupper, and Mathias Pohl.
\newblock Robust risk aggregation with neural networks.
\newblock \emph{Mathematical finance}, 30\penalty0 (4):\penalty0 1229--1272,
  2020.

\bibitem[Ellsberg(1961)]{ellsberg1961risk}
Daniel Ellsberg.
\newblock Risk, ambiguity, and the savage axioms.
\newblock \emph{The quarterly journal of economics}, 75\penalty0 (4):\penalty0
  643--669, 1961.

\bibitem[Farghly and Rebeschini(2021)]{farghly2021time}
Tyler Farghly and Patrick Rebeschini.
\newblock Time-independent generalization bounds for sgld in non-convex
  settings.
\newblock \emph{Advances in Neural Information Processing Systems},
  34:\penalty0 19836--19846, 2021.

\bibitem[Gao and Kleywegt(2023)]{gao2023distributionally}
Rui Gao and Anton Kleywegt.
\newblock Distributionally robust stochastic optimization with {W}asserstein
  distance.
\newblock \emph{Mathematics of Operations Research}, 48\penalty0 (2):\penalty0
  603--655, 2023.

\bibitem[Gao et~al.(2022)Gao, Chen, and Kleywegt]{gao2022wasserstein}
Rui Gao, Xi~Chen, and Anton~J Kleywegt.
\newblock Wasserstein distributionally robust optimization and variation
  regularization.
\newblock \emph{Oper. Res.}, 2022.

\bibitem[Jiang and Obloj(2024)]{jiang2024sensitivity}
Yifan Jiang and Jan Obloj.
\newblock Sensitivity of causal distributionally robust optimization.
\newblock \emph{arXiv preprint arXiv:2408.17109}, 2024.

\bibitem[Kamalaruban et~al.(2020)Kamalaruban, Huang, Hsieh, Rolland, Shi, and
  Cevher]{kamalaruban2020robust}
Parameswaran Kamalaruban, Yu-Ting Huang, Ya-Ping Hsieh, Paul Rolland, Cheng
  Shi, and Volkan Cevher.
\newblock Robust reinforcement learning via adversarial training with langevin
  dynamics.
\newblock \emph{Advances in Neural Information Processing Systems},
  33:\penalty0 8127--8138, 2020.

\bibitem[Kinoshita and Suzuki(2022)]{kinoshita2022improved}
Yuri Kinoshita and Taiji Suzuki.
\newblock Improved convergence rate of stochastic gradient {L}angevin dynamics
  with variance reduction and its application to optimization.
\newblock \emph{Advances in Neural Information Processing Systems},
  35:\penalty0 19022--19034, 2022.

\bibitem[Knight(1921)]{knight1921risk}
Frank~Hyneman Knight.
\newblock \emph{Risk, uncertainty and profit}, volume~31.
\newblock Houghton Mifflin, 1921.

\bibitem[Kong et~al.(2020)Kong, Li, Liu, Teo, and Yan]{kong2020appointment}
Qingxia Kong, Shan Li, Nan Liu, Chung-Piaw Teo, and Zhenzhen Yan.
\newblock Appointment scheduling under time-dependent patient no-show behavior.
\newblock \emph{Management Science}, 66\penalty0 (8):\penalty0 3480--3500,
  2020.

\bibitem[Li et~al.(2016)Li, Chen, Carlson, and Carin]{li2016preconditioned}
Chunyuan Li, Changyou Chen, David Carlson, and Lawrence Carin.
\newblock Preconditioned stochastic gradient {L}angevin dynamics for deep
  neural networks.
\newblock In \emph{Proceedings of the AAAI conference on artificial
  intelligence}, volume~30, 2016.

\bibitem[Li et~al.(2023)Li, Sutter, and Kuhn]{li2023policy}
Mengmeng Li, Tobias Sutter, and Daniel Kuhn.
\newblock Policy gradient algorithms for robust mdps with non-rectangular
  uncertainty sets.
\newblock \emph{arXiv preprint arXiv:2305.19004}, 2023.

\bibitem[Lim and Sabanis(2021)]{lim2021polygonal}
Dong-Young Lim and Sotirios Sabanis.
\newblock {P}olygonal {U}nadjusted {L}angevin {A}lgorithms: {C}reating stable
  and efficient adaptive algorithms for neural networks.
\newblock \emph{arXiv preprint arXiv:2105.13937}, 2021.

\bibitem[Lim et~al.(2022)Lim, Neufeld, Sabanis, and Zhang]{lim2022langevin}
Dong-Young Lim, Ariel Neufeld, Sotirios Sabanis, and Ying Zhang.
\newblock {Langevin dynamics based algorithm e-TH $\varepsilon $ O POULA for
  stochastic optimization problems with discontinuous stochastic gradient}.
\newblock \emph{arXiv preprint arXiv:2210.13193}, 2022.

\bibitem[Lim et~al.(2023)Lim, Neufeld, Sabanis, and Zhang]{lim2021non}
Dong-Young Lim, Ariel Neufeld, Sotirios Sabanis, and Ying Zhang.
\newblock Non-asymptotic estimates for tusla algorithm for non-convex learning
  with applications to neural networks with {ReLU} activation function.
\newblock \emph{IMA Journal of Numerical Analysis}, 2023.

\bibitem[Liu et~al.(2025)Liu, Khan, Mancino-Ball, and Xu]{liu2025stochastic}
Wei Liu, Muhammad Khan, Gabriel Mancino-Ball, and Yangyang Xu.
\newblock {A stochastic smoothing framework for nonconvex-nonconcave
  min-sum-max problems with applications to Wasserstein distributionally robust
  optimization}.
\newblock \emph{arXiv preprint arXiv:2502.17602}, 2025.

\bibitem[Lovas et~al.(2023)Lovas, Lytras, R{\'a}sonyi, and
  Sabanis]{lovas2023taming}
Attila Lovas, Iosif Lytras, Mikl{\'o}s R{\'a}sonyi, and Sotirios Sabanis.
\newblock Taming neural networks with tusla: Nonconvex learning via adaptive
  stochastic gradient langevin algorithms.
\newblock \emph{SIAM Journal on Mathematics of Data Science}, 5\penalty0
  (2):\penalty0 323--345, 2023.

\bibitem[Mohajerin~Esfahani and Kuhn(2018)]{mohajerin2018data}
Peyman Mohajerin~Esfahani and Daniel Kuhn.
\newblock Data-driven distributionally robust optimization using the
  {W}asserstein metric: performance guarantees and tractable reformulations.
\newblock \emph{Mathematical Programming}, 171\penalty0 (1-2):\penalty0
  115--166, 2018.

\bibitem[Nendel and Sgarabottolo(2022)]{nendel2022parametric}
Max Nendel and Alessandro Sgarabottolo.
\newblock A parametric approach to the estimation of convex risk functionals
  based on {W}asserstein distance.
\newblock \emph{arXiv preprint arXiv:2210.14340}, 2022.

\bibitem[Neufeld and Xiang(2022)]{neufeld2022numerical}
Ariel Neufeld and Qikun Xiang.
\newblock Numerical method for approximately optimal solutions of two-stage
  distributionally robust optimization with marginal constraints.
\newblock \emph{arXiv preprint arXiv:2205.05315}, 2022.

\bibitem[Neufeld et~al.(2022)Neufeld, En, and Zhang]{neufeld2022non}
Ariel Neufeld, Matthew Ng~Cheng En, and Ying Zhang.
\newblock Non-asymptotic convergence bounds for modified tamed unadjusted
  {L}angevin algorithm in non-convex setting.
\newblock \emph{arXiv preprint arXiv:2207.02600}, 2022.

\bibitem[Neufeld et~al.(2023)Neufeld, Sester, and
  {\v{S}}iki{\'c}]{neufeld2023markov}
Ariel Neufeld, Julian Sester, and Mario {\v{S}}iki{\'c}.
\newblock Markov decision processes under model uncertainty.
\newblock \emph{Mathematical Finance}, 33\penalty0 (3):\penalty0 618--665,
  2023.

\bibitem[Ob{\l}{\'o}j and Wiesel(2021)]{obloj2021distributionally}
Jan Ob{\l}{\'o}j and Johannes Wiesel.
\newblock Distributionally robust portfolio maximization and marginal utility
  pricing in one period financial markets.
\newblock \emph{Math. Finance}, 31\penalty0 (4):\penalty0 1454--1493, 2021.

\bibitem[Raginsky et~al.(2017)Raginsky, Rakhlin, and
  Telgarsky]{raginsky2017non}
Maxim Raginsky, Alexander Rakhlin, and Matus Telgarsky.
\newblock Non-convex learning via stochastic gradient langevin dynamics: a
  nonasymptotic analysis.
\newblock In \emph{Conference on Learning Theory}, pages 1674--1703. PMLR,
  2017.

\bibitem[Rujeerapaiboon et~al.(2016)Rujeerapaiboon, Kuhn, and
  Wiesemann]{rujeerapaiboon2016robust}
Napat Rujeerapaiboon, Daniel Kuhn, and Wolfram Wiesemann.
\newblock Robust growth-optimal portfolios.
\newblock \emph{Management Science}, 62\penalty0 (7):\penalty0 2090--2109,
  2016.

\bibitem[Sabanis and Zhang(2020)]{sglddiscont}
Sotirios Sabanis and Ying Zhang.
\newblock {A} fully data-driven approach to minimizing {C}{V}a{R} for portfolio
  of assets via {SGLD} with discontinuous updating.
\newblock \emph{arXiv preprint arXiv:2007.01672}, 2020.

\bibitem[Sauldubois and Touzi(2024)]{sauldubois2024first}
Nathan Sauldubois and Nizar Touzi.
\newblock {First order Martingale model risk and semi-static hedging}.
\newblock \emph{arXiv preprint arXiv:2410.06906}, 2024.

\bibitem[Shafieezadeh~Abadeh et~al.(2015)Shafieezadeh~Abadeh,
  Mohajerin~Esfahani, and Kuhn]{shafieezadeh2015distributionally}
Soroosh Shafieezadeh~Abadeh, Peyman~M Mohajerin~Esfahani, and Daniel Kuhn.
\newblock Distributionally robust logistic regression.
\newblock \emph{Advances in Neural Information Processing Systems}, 28, 2015.

\bibitem[Taskesen et~al.(2021)Taskesen, Yue, Blanchet, Kuhn, and
  Nguyen]{taskesen2021sequential}
Bahar Taskesen, Man-Chung Yue, Jose Blanchet, Daniel Kuhn, and Viet~Anh Nguyen.
\newblock Sequential domain adaptation by synthesizing distributionally robust
  experts.
\newblock In \emph{International Conference on Machine Learning}, pages
  10162--10172. PMLR, 2021.

\bibitem[Welling and Teh(2011)]{welling2011bayesian}
Max Welling and Yee~W Teh.
\newblock Bayesian learning via stochastic gradient {L}angevin dynamics.
\newblock In \emph{Proceedings of the 28th international conference on machine
  learning (ICML-11)}, pages 681--688. Citeseer, 2011.

\bibitem[Wu et~al.(2020)Wu, Chen, Zhao, Chen, Yao, Sun, Wang, Zhang, and
  Zhou]{wu2020characterizing}
Bingzhe Wu, Chaochao Chen, Shiwan Zhao, Cen Chen, Yuan Yao, Guangyu Sun,
  Li~Wang, Xiaolu Zhang, and Jun Zhou.
\newblock Characterizing membership privacy in stochastic gradient langevin
  dynamics.
\newblock In \emph{Proceedings of the AAAI Conference on Artificial
  Intelligence}, volume~34, pages 6372--6379, 2020.

\bibitem[Xu et~al.(2018)Xu, Chen, Zou, and Gu]{xu2018global}
Pan Xu, Jinghui Chen, Difan Zou, and Quanquan Gu.
\newblock Global convergence of {L}angevin dynamics based algorithms for
  nonconvex optimization.
\newblock \emph{Advances in Neural Information Processing Systems}, 31, 2018.

\bibitem[Zhang et~al.(2022)Zhang, Wilson, and De~Sa]{zhang2022low}
Ruqi Zhang, Andrew~Gordon Wilson, and Christopher De~Sa.
\newblock Low-precision stochastic gradient langevin dynamics.
\newblock In \emph{International Conference on Machine Learning}, pages
  26624--26644. PMLR, 2022.

\bibitem[Zhang et~al.(2023)Zhang, Akyildiz, Damoulas, and
  Sabanis]{zhang2023nonasymptotic}
Ying Zhang, {\"O}mer~Deniz Akyildiz, Theodoros Damoulas, and Sotirios Sabanis.
\newblock Nonasymptotic estimates for stochastic gradient {L}angevin dynamics
  under local conditions in nonconvex optimization.
\newblock \emph{Applied Mathematics \& Optimization}, 87\penalty0 (2):\penalty0
  25, 2023.

\bibitem[Zhang et~al.(2017)Zhang, Liang, and Charikar]{zhang2017hitting}
Yuchen Zhang, Percy Liang, and Moses Charikar.
\newblock A hitting time analysis of stochastic gradient langevin dynamics.
\newblock In \emph{Conference on Learning Theory}, pages 1980--2022. PMLR,
  2017.

\bibitem[Zhao and Guan(2018)]{zhao2018data}
Chaoyue Zhao and Yongpei Guan.
\newblock Data-driven risk-averse stochastic optimization with {W}asserstein
  metric.
\newblock \emph{Operations Research Letters}, 46\penalty0 (2):\penalty0
  262--267, 2018.

\bibitem[Zhu et~al.(2024)Zhu, Gurbuzbalaban, Raj, and Simsekli]{zhu2024uniform}
Lingjiong Zhu, Mert Gurbuzbalaban, Anant Raj, and Umut Simsekli.
\newblock Uniform-in-time {W}asserstein stability bounds for (noisy) stochastic
  gradient descent.
\newblock \emph{Advances in Neural Information Processing Systems}, 36, 2024.

\bibitem[Zou et~al.(2021)Zou, Xu, and Gu]{zou2021faster}
Difan Zou, Pan Xu, and Quanquan Gu.
\newblock Faster convergence of stochastic gradient langevin dynamics for
  non-log-concave sampling.
\newblock In \emph{Uncertainty in Artificial Intelligence}, pages 1152--1162.
  PMLR, 2021.

\end{thebibliography}

%\begin{thebibliography}{143}
%\providecommand{\natexlab}[1]{#1}
%\providecommand{\url}[1]{\texttt{#1}}
%\expandafter\ifx\csname urlstyle\endcsname\relax
%  \providecommand{\doi}[1]{doi: #1}\else
%  \providecommand{\doi}{doi: \begingroup \urlstyle{rm}\Url}\fi

%\end{thebibliography}

\end{document}